\DeclareMathOperator\Span{span}
\DeclareMathOperator\spect{Spect}
\theoremstyle{definition}
\newtheorem{proposition}[theorem]{Proposition}
\begin{document}

\title{Nonlinear analysis of forced mechanical systems with internal resonance using spectral submanifolds, Part II: Bifurcation and quasi-periodic response}

\titlerunning{Nonlinear analysis of systems with internal resonance using spectral submanifolds -- Part II}

\author{Mingwu Li         \and
        George Haller
}

%\authorrunning{Short form of author list} % if too long for running head

\institute{M. Li (\Letter)\and G. Haller \at
              Institute for Mechanical Systems, ETH Z\"{u}rich \\
              Leonhardstrasse 21, 8092 Zurich, Switzerland\\
              \email{mingwli@ethz.ch}           %  \\
%             \emph{Present address:} of F. Author  %  if needed
}

\date{Received: date / Accepted: date}
% The correct dates will be entered by the editor

\maketitle

\begin{sloppypar}
\begin{abstract}
In Part I of this paper, we have used spectral submanifold (SSM) theory to construct reduced-order models for harmonically excited mechanical systems with internal resonances. In that setting, extracting forced response curves formed by periodic orbits of the full system was reduced to locating the solution branches of equilibria of the corresponding reduced-order model. Here we use bifurcations of the equilibria of the reduced-order model to predict bifurcations of the periodic response of the full system. Specifically, we identify Hopf bifurcations of equilibria and limit cycles in reduced models on SSMs to predict the existence of two-dimensional and three-dimensional quasi-periodic attractors and repellers in periodically forced mechanical systems of arbitrary dimension. We illustrate the accuracy and efficiency of these computations on finite-element models of beams and plates.
\end{abstract}
\end{sloppypar}

\keywords{Invariant manifolds \and Reduced-order models \and  Spectral submanifolds \and Internal resonances \and Bifurcation}

\section{Introduction}
\begin{sloppypar}
Periodic response of harmonically excited nonlinear mechanical systems is
commonly observed in experiments and numerics. As the amplitude or frequency of the harmonic excitation varies, however, a stable periodic response may become unstable via bifurcations. Three common bifurcations of periodic orbits are saddle-node, period-doubling and torus bifurcations. The last one is also referred to as Neimark-Sacker bifurcation when discussed via an appropriate Poinc\'are map~\cite{kuznetsov2013elements}. In this bifurcation, a unique two-dimensional invariant torus is born out of a periodic orbit for each fixed excitation frequency and amplitude. The quasi-periodic response associated with such an invariant torus has been widely observed in mechanical systems under harmonic excitation, ranging from simple van der Pol oscillator~\cite{holmes1978bifurcations} to more complicated systems~\cite{kim1990bifurcation,detroux2015performance,chang1993non,thomas2005non,shaw2016periodic,huang2019quasi,fontanela2019computation}. 

Torus bifurcations of periodic orbits are also commonly observed in harmonically excited mechanical systems with internal resonance. Such quasi-periodic responses have been reported in beams with a 3:1 internal resonance~\cite{shaw2016periodic}, plates with a 1:1 internal resonance~\cite{chang1993non}, shells with a 1:1:2 internal resonance~\cite{thomas2005non} and structures with a cyclic symmetry that creates a 1:1 internal resonance~\cite{fontanela2019computation}.

The quasi-periodic orbits created in these bifurcations can be calculated with various numerical methods. The simplest is direct numerical integration, which is applicable if the quasi-periodic torus is asymptotically stable. However, it is difficult to determine when the steady state is reached and the method is inapplicable if the quasi-periodic torus is of saddle type. In the shooting method~\cite{kaas1985computation,kim1996quasi}, a quasi-periodic orbit is formulated as a fixed point of a second-order Poincar\'e map, which is constructed with Poincar\'e points that satisfy some conditions. The stability and bifurcation characteristics for the calculated quasi-periodic orbit can be inferred from the spectrum of that Poincar\'e map. However, this method requires repeated numerical integrations. 

As an alternative, harmonic balance methods have also been used to calculate quasi-periodic orbits~\cite{lau1983incremental,kim1996quasi-HB,guskov2012harmonic,ju2017modified,liao2020continuation}. In the \emph{classic} harmonic balance method, an unknown quasi-periodic orbit is approximated with a truncated Fourier series of multiple time scales. The equation of motion is approximately satisfied with a Galerkin projection, yielding a set of nonlinear algebraic equations for the coefficients of the series. Then a linearization of the resulting nonlinear algebraic equations is performed to locate their roots iteratively~\cite{kim1996quasi-HB}. In the \emph{incremental} harmonic balance method, the linearization is first applied to the nonlinear equation of motion (in the form of differential equations), and the Galerkin projection is then applied to the linearized equation of motion~\cite{lau1983incremental,ju2017modified}. The alternating frequency/time technique has been used to speed up the computation of the projection~\cite{kim1996quasi-HB}. In addition, Broyden's method~\cite{ju2017modified} and optimization techniques~\cite{liao2020continuation} have been used to reduce the computational cost of the evaluation of Jacobians in the iterations. Recently, an integral equation approach has also been proposed for the fast computation of quasi-periodic orbits of systems with quasi-periodic forcing~\cite{jain2019fast}.

A quasi-periodic orbit is contained in a torus densely filled by infinitely many quasi-periodic orbits~\cite{broer2009quasi}. Such a torus is generically referred to as a \emph{quasi-periodic invariant torus}~\cite{schilder2005continuation,schilder2006fourier}, or simply, an invariant torus. This invariant torus is the solution to a boundary-value problem composed of a partial differential equation (PDE) and appropriate phase conditions~\cite{schilder2005continuation,schilder2006fourier}. Both \emph{semi-discretization} and \emph{full-discretization} schemes have been developed to solve the boundary-value problem. In the full-discretization method, the PDE is discretized via a finite difference method~\cite{schilder2005continuation}, a Galerkin projection approach~\cite{schilder2006fourier}, or a spectral collocation scheme~\cite{roose2007continuation,Kunt}. In the semi-discretization method, the unknown torus is expressed as a truncated Fourier series with unknown periodic coefficients, and the PDE is approximated as a set of ordinary differential equations (ODEs) for those unknown coefficients. The coefficients are then solved for as periodic orbits of the ODEs with some phase conditions~\cite{schilder2006fourier}. In particular, such periodic solutions can be obtained with various numerical methods, as detailed in the introduction of Part I of this paper~\cite{li2021nonlinear}. 

An alternative semi-discretization method was discussed in~\cite{dankowicz2013recipes}, where the unknown periodic coefficients are expressed in terms of multiple trajectories coupled through boundary conditions. Remarkably, all these trajectories share the same form of ODEs, enabling object-oriented problem constructions. Based on this construction paradigm, a general-purpose toolbox for the parameter continuation of two-dimensional invariant tori,~\texttt{Tor}, has been developed recently~\cite{li2020tor}.

The above numerical methods are effective for low-dimensional systems but impractical for high-dimensional systems, e.g., finite element models. For the latter, one needs to construct reduced-order models in order to calculate the quasi-periodic response efficiently. Invariant manifolds provide a powerful tool for constructing such reduced-order models~\cite{pesheck2002new,jiang2005construction,touze2006nonlinear,haller2016nonlinear}. The invariant manifolds envisioned as nonlinear continuations of linear modal subspaces are often called \emph{nonlinear normal modes} (NNMs)~\cite{shaw1993normal}. Galerkin approaches have been used to calculate the NNMs and derive reduced-order models to extract the free and forced response~\cite{pesheck2002new,jiang2005construction,jiang2005nonlinear}. The method of normal form has also been used to derive reduced-order models on NNMs~\cite{touze2006nonlinear,amabili2007reduced,liu2019simultaneous,vizzaccaro2021direct,touze2021model}. Touz{\'e} and coworkers derived explicit third-order reduced-order models for systems with quadratic and cubic nonlinearities~\cite{touze2006nonlinear,vizzaccaro2021direct,touze2021model}. They used the reduced-order models to calculate the backbone and forced response curves of full systems~\cite{touze2006nonlinear,amabili2007reduced,vizzaccaro2021direct,touze2021model}.

An alternative reduction method based on invariant manifolds was recently proposed by Haller and Ponsioen~\cite{haller2016nonlinear}. This method uses spectral submanifold (SSM) theory to reduce the full dynamics to exact invariant SSM surfaces in the phase space. An SSM is the unique smoothest nonlinear continuation of a spectral subspace of the linear part of the dynamical system, which exists under low-order non-resonance conditions. For systems without internal resonance, the SSM tangent to a linear mode of interest is two-dimensional and the reduced-order model on the resonant SSM enables explicit calculation of the backbone and forced response curves around the mode~\cite{szalai2017nonlinear,breunung2018explicit,ponsioen2019analytic,ponsioen2020model}. A numerical procedure for computing arbitrary dimensional SSM up to any polynomial order of approximation has been developed very recently by Jain and Haller~\cite{SHOBHIT}. This procedure has been implemented in the open-source MATLAB package, SSMTool-2.0~\cite{ssmtool2}. Based on the work~\cite{SHOBHIT}, Part I of this paper~\cite{li2021nonlinear} investigated SSM reduction for systems with internal resonances and demonstrated that the forced response curve of the full system can be efficiently obtained from the solution branch of fixed points in the corresponding reduced-order model on SSMs.

The studies mentioned so far have mostly focused on the computation of backbone and forced response curves composed of periodic orbits. As an exception, Amabili and Touz{\'e}~\cite{amabili2007reduced} also discuss quasi-periodic orbits. Specifically, they derive reduced-order models for fluid-filled circular cylindrical shells subject to harmonic excitation. They observe torus bifurcations of periodic orbits in the continuation of periodic orbits of the reduced-order models. They then perform numerical integration to obtain quasi-periodic response. 

Here we focus on the computation and bifurcation analysis of quasi-periodic orbits using SSM reduction. As we will see, the two-dimensional invariant tori containing the quasi-periodic responses of the full system can be obtained from the limit cycles of the corresponding reduced-order model, and the stability types of these quasi-periodic orbits can be inferred from that of the corresponding limit cycles. In addition, we perform bifurcation analysis of the quasi-periodic orbits based on the bifurcation of the corresponding limit cycles in the reduced models on the SSMs. Finally, we calculate the bifurcating three-dimensional invariant tori of the full system from the corresponding two-dimensional invariant tori of the reduced-order model.

The rest of this paper is organized as follows. In the next section, we review the definition of SSMs and their fundamental properties. In section~\ref{sec:theos}, we summarize from Part I the reduced-order models obtained on resonant SSMs. We then show how the bifurcations of periodic orbits in the full system are related to the bifurcations of the equilibria of the corresponding reduced-order models on resonant SSMs. We move on to the computation and bifurcation analysis of quasi-periodic orbits of the full system, including two- and three-dimensional invariant tori, in section~\ref{sec:tori}. Section~\ref{sec:implementation} describes two numerical toolboxes, namely, \texttt{SSM-po} and \texttt{SSM-tor}, for the computation of two- and three-dimensional invariant tori, respectively. In section~\ref{sec:example}, we illustrate the power of our resonant-SSM-based approaches on several examples, including finite element models of a Bernoulli beam with a nonlinear support spring, a von K\'arm\'an beam and a von K\'arm\'an plate with distributed nonlinearities. A summary and several directions for future research are presented in the concluding section~\ref{sec:conclusion}.

\end{sloppypar}

\section{Spectral submanifold}
\subsection{System setup}
\begin{sloppypar}
Consider a first-order dynamical system subject to periodic forcing of the form
\begin{equation}
\label{eq:full-first}
\boldsymbol{B}\dot{\boldsymbol{z}}	=\boldsymbol{A}\boldsymbol{z}+\boldsymbol{F}(\boldsymbol{z})+\epsilon\boldsymbol{F}^{\mathrm{ext}}({\Omega t}),\quad 0<\epsilon\ll1,
\end{equation}
where $\boldsymbol{z}\in\mathbb{R}^N$, $\boldsymbol{A}, \boldsymbol{B}\in\mathbb{R}^{N\times N}$, $\boldsymbol{F}(\boldsymbol{z})\sim\mathcal{O}(|\boldsymbol{z}|^2)$, $\Omega\in\mathbb{R}$ and $t\in\mathbb{R}_+$.
As a special case, the equation of motion of a harmonically excited mechanical system in the second-order form,
\begin{equation}
\label{eq:eom-second-full}
\boldsymbol{M}\ddot{\boldsymbol{x}}+\boldsymbol{C}\dot{\boldsymbol{x}}+\boldsymbol{K}\boldsymbol{x}+\boldsymbol{f}(\boldsymbol{x},\dot{\boldsymbol{x}})=\epsilon \boldsymbol{f}^{\mathrm{ext}}(\Omega t),
\end{equation}
can be recast into the first-order form~\eqref{eq:full-first} by letting
\begin{gather}
\boldsymbol{z}=\begin{pmatrix}\boldsymbol{x}\\\dot{\boldsymbol{x}}\end{pmatrix},\,\,
\boldsymbol{A}=\begin{pmatrix}-\boldsymbol{K} 
& \boldsymbol{0}\\\boldsymbol{0} & \boldsymbol{M}\end{pmatrix},\,\,
\boldsymbol{B}=\begin{pmatrix}\boldsymbol{C} 
& \boldsymbol{M}\\\boldsymbol{M} & \boldsymbol{0}\end{pmatrix},\nonumber\\
\boldsymbol{F}(\boldsymbol{z})=\begin{pmatrix}{-\boldsymbol{f}(\boldsymbol{x},\dot{\boldsymbol{x}})}\\\boldsymbol{0}\end{pmatrix},\,\,
\boldsymbol{F}^{\mathrm{ext}}(\Omega t) = \begin{pmatrix}\boldsymbol{f}^{\mathrm{ext}}(\Omega t)\\\boldsymbol{0}\end{pmatrix},
\end{gather}
where $\boldsymbol{x}\in\mathbb{R}^n$ is generalized displacement vector; $\boldsymbol{M}, \boldsymbol{C},\boldsymbol{K}\in\mathbb{R}^{n\times n}$ are mass, damping and stiffness matrices, respectively; and $\boldsymbol{f}(\boldsymbol{x},\dot{\boldsymbol{x}})$ denotes the nonlinear internal force vector which is of class $C^r$ and satisfies $\boldsymbol{f}(\boldsymbol{x},\dot{\boldsymbol{x}})\sim \mathcal{O}(|\boldsymbol{x}|^2,|\boldsymbol{x}||\dot{\boldsymbol{x}}|,|\dot{\boldsymbol{x}}|^2)$.
\end{sloppypar}

The analysis of the linear part of~\eqref{eq:full-first} leads to generalized eigenvalue problems for the matrix pair $(\boldsymbol{A},\boldsymbol{B})$ in the form
\begin{equation}
\boldsymbol{A}\boldsymbol{v}_j=\lambda_j\boldsymbol{B}\boldsymbol{v}_j,\quad \boldsymbol{u}_j^\ast \boldsymbol{A}=\lambda_j \boldsymbol{u}_j^\ast \boldsymbol{B},
\end{equation}
where $\lambda_j$ is a generalized eigenvalue and $\boldsymbol{v}_j$ and $\boldsymbol{u}_j$ are the corresponding \emph{right} and \emph{left} eigenvectors, respectively. We assume that the real parts of all eigenvalues are strictly less than zero
\begin{equation}
    \mathrm{Re}(\lambda_j)<0,\quad \forall j\in\{1,\cdots,N\}
\end{equation}
and hence the trivial equilibrium of the linearized system, $\boldsymbol{B}\dot{\boldsymbol{z}}=\boldsymbol{A}\boldsymbol{z}$, is asymptotically stable. In addition, we assume that for a $2m$-dimensional \emph{master} underdamped modal subspace
\begin{equation}
\mathcal{E}=\Span\{\boldsymbol{v}^\mathcal{E}_1,\bar{\boldsymbol{v}}^\mathcal{E}_1,\cdots,\boldsymbol{v}^\mathcal{E}_m,\bar{\boldsymbol{v}}^\mathcal{E}_m\},
\end{equation}
the complex eigenvalues in the spectrum of $\mathcal{E}$ satisfy an approximate \emph{inner} or \emph{internal} resonance relationship of the form
\begin{equation}
\label{eq:res-inner}
    \lambda_i^\mathcal{E}\approx\boldsymbol{l}\cdot\boldsymbol{\lambda}^\mathcal{E}+\boldsymbol{j}\cdot\bar{\boldsymbol{\lambda}}^\mathcal{E},\quad \bar{\lambda}_i^\mathcal{E}\approx\boldsymbol{j}\cdot\boldsymbol{\lambda}^\mathcal{E}+\boldsymbol{l}\cdot\bar{\boldsymbol{\lambda}}^\mathcal{E}
\end{equation}
for some $i\in\{1,\cdots,m\}$, where $\boldsymbol{l},\boldsymbol{j}\in\mathbb{N}_0^m$ (the subscript 0 here emphasizes that zero is included), $|\boldsymbol{l}+\boldsymbol{j}|:=\sum_{k=1}^m (l_k+j_k)\geq2$, and $\boldsymbol{\lambda}_\mathcal{E}=(\lambda^\mathcal{E}_1,\cdots,\lambda^\mathcal{E}_m)$. As an example, consider a weakly damped system with an approximate 1:3 internal resonance between the first two modes, namely, $\omega_2\approx3\omega_1$, where $\omega_{1}$ and $\omega_2$ are the undamped {natural} frequencies of the first and second modes. Under the assumed weak damping, the first two modes have complex conjugate pairs of eigenvalues. If $\lambda_{i}$ and $\bar{\lambda}_i$ denote the eigenvalues of the $i$-th complex conjugate pair of modes, then the assumed internal resonance can be written in the form~\eqref{eq:res-inner} with
\begin{gather}
\lambda_2\approx(3+j)\lambda_1+j\bar{\lambda}_1+l\lambda_2+l\bar{\lambda}_2,\nonumber\\
\lambda_2\approx(1+j)\lambda_2+j\bar{\lambda}_2+l\lambda_1+l\bar{\lambda}_1,\nonumber\\
\lambda_1\approx(1+j)\lambda_1+j\bar{\lambda}_1+l\lambda_2+l\bar{\lambda}_2
\end{gather}
for all $j,l\in\mathbb{N}_0$. 

The check of internal resonances in~\eqref{eq:res-inner} is based on the eigenvalues of the linear part of the nonlinear system~\eqref{eq:full-first}. This is different from the internal resonances between nonlinear frequencies in conservative systems~\cite{kerschen2009nonlinearI}. Modal interactions resulted from the internal resonances between nonlinear frequencies can be observed in conservative backbone curves composed of nonlinear normal modes~\cite{kerschen2009nonlinearI}. However, the persistence of these interacted backbone curves under the addition of damping and excitation is not clear. In contrast, this paper focuses on forced responses of damped systems. Provided that the backbone curves with nonlinear modal interactions are persistent, they are generally observed in systems operate at large response amplitudes~\cite{kerschen2009nonlinearI}, which result from large excitation levels. We have assumed small forcing amplitudes (see~\eqref{eq:full-first}) to adapt the theory of spectral submanifolds. Therefore, the scenario of internal resonances between nonlinear frequencies is outside the scope of this paper.

\subsection{SSM definition and reduction}
As discussed in Part I, under assumption~\eqref{eq:res-inner}, the full system~\eqref{eq:full-first} can be reduced to a low-dimensional system on an SSM that captures the dynamics of the full system. Specifically, following~\cite{haller2016nonlinear} we look for
a non-autonomous \emph{spectral submanifold} (SSM) with period ${2\pi}/{\Omega}$, $\mathcal{W}(\mathcal{E},\Omega t)$, corresponding to the master spectral subspace $\mathcal{E}$, as a $2m$-dimensional \emph{invariant} manifold to the nonlinear system~\eqref{eq:full-first} such that $\mathcal{W}(\mathcal{E},\Omega t)$
\begin{enumerate}[label=(\roman*)]
\item perturbs smoothly from $\mathcal{E}$ at the trivial equilibrium $\boldsymbol{z}=0$ under the addition of nonlinear terms and external forcing in~\eqref{eq:full-first},
\item is strictly smoother than any other periodic invariant manifolds with period ${2\pi}/{\Omega}$ that satisfy (i).
\end{enumerate}

\begin{sloppypar}
The existence and uniqueness of such SSMs have been proved in~\cite{haller2016nonlinear} in a more general setting, based on the results of Cabr\'e et al.~\cite{cabre2003parameterization-i,cabre2003parameterization-ii,cabre2005parameterization-iii}. Here we summarize the main results in the following theorem, which has already been stated in Part I but will be included here for completeness. 
\begin{theorem}
\label{th:SSM-existence-uniqueness}
Let $\spect(\mathcal{E}) = \{\lambda^\mathcal{E}_1,\bar{\lambda}^\mathcal{E}_1,\cdots,\lambda^\mathcal{E}_m,\bar{\lambda}^\mathcal{E}_m\}$ and define  $\spect(\boldsymbol{\Lambda})=\{\lambda_1,\cdots,\lambda_{2n}\}$.
Under the non-resonance condition
\begin{gather}
\boldsymbol{a}\cdot\mathrm{Re}(\boldsymbol{\lambda}^\mathcal{E})+\boldsymbol{b}\cdot\mathrm{Re}(\bar{\boldsymbol{\lambda}}^\mathcal{E})\neq \mathrm{Re}(\lambda_k),\nonumber\\
\forall\,\,\lambda_k\in\spect(\boldsymbol{\Lambda})\setminus\spect(\mathcal{E}),\nonumber\\
\forall\,\,\boldsymbol{a},\boldsymbol{b}\in\mathbb{N}_0^m,\,\,2\leq |\boldsymbol{a}+\boldsymbol{b}|\leq\Sigma(\mathcal{E}),
\end{gather}
where the \emph{absolute spectral quotient} $\Sigma(\mathcal{E})$ of $\mathcal{E}$ is defined as
\begin{equation}
\Sigma(\mathcal{E}) = \mathrm{Int}\left(\frac{\min_{\lambda\in\spect(\boldsymbol{\Lambda})}\mathrm{Re}\lambda}{\max_{\lambda\in\spect(\mathcal{E})}\mathrm{Re}\lambda}\right),
\end{equation}
the following hold for system~\eqref{eq:full-first}:
\begin{enumerate}[label=(\roman*)]
\item There exists a unique $2m$-dimensional, time-periodic SSM of class $C^{\Sigma(\mathcal{E})+1}$, $\mathcal{W}(\mathcal{E},\Omega t)\subset\mathbb{R}^{2m}$ that depends smoothly on the parameter $\epsilon$.
\item $\mathcal{W}(\mathcal{E},\Omega t)$ can be viewed as an embedding of an open set $\mathcal{U}$ into the phase space of system~\eqref{eq:full-first} via a map
\begin{equation}
\boldsymbol{W}_{\epsilon}(\boldsymbol{p},\phi):\mathcal{U}=U\times{S}^1\to\mathbb{R}^{2n},\quad U\subset \mathbb{C}^{2m}.
\end{equation}
\item There exists a polynomial function $\boldsymbol{R}_\epsilon(\boldsymbol{p},\phi):\mathcal{U}\to{U}$ satisfying the invariance equation
\begin{align}
\label{eq:invariance}
& \boldsymbol{B}\left({D}_{\boldsymbol{p}}\boldsymbol{W}_{\epsilon}(\boldsymbol{p},\phi) \boldsymbol{R}_{\epsilon}(\boldsymbol{p},\phi)+{D}_{\phi}\boldsymbol{W}_{\epsilon}(\boldsymbol{p},\phi) \Omega\right)\nonumber\\
&=\boldsymbol{A}\boldsymbol{W}_{\epsilon}(\boldsymbol{p},\phi)+\boldsymbol{F}( \boldsymbol{W}_{\epsilon}(\boldsymbol{p},\phi))+\epsilon\boldsymbol{F}^{\mathrm{ext}}({\phi}),
\end{align}
where $D_{\boldsymbol{p}}$ and $D_\phi$ denote the partial derivatives with respect to $\boldsymbol{p}$ and $\phi$, such that the reduced dynamics on the SSM $\mathcal{W}(\mathcal{E},\Omega t)$ can be expressed as
\begin{equation}
\label{eq:red-dyn}
\dot{\boldsymbol{p}} = \boldsymbol{R}_\epsilon(\boldsymbol{p},\phi),\quad \dot{\phi}=\Omega.
\end{equation}
\end{enumerate}
\end{theorem}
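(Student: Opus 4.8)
The plan is to specialize the parameterization method for invariant manifolds of Cabr\'e, Fontich and de la Llave~\cite{cabre2003parameterization-i,cabre2003parameterization-ii,cabre2005parameterization-iii}, in the form adapted to spectral submanifolds in~\cite{haller2016nonlinear}. First I would autonomize the periodically forced system~\eqref{eq:full-first} by appending the phase $\phi\in S^1$ with $\dot\phi=\Omega$, obtaining an autonomous vector field on $\mathbb{R}^N\times S^1$ whose linearization at $(\boldsymbol{z},\phi)=(\boldsymbol{0},\phi)$ has spectrum $\spect(\boldsymbol{\Lambda})$ together with the neutral direction $\partial_\phi$. Because $\mathrm{Re}(\lambda_j)<0$ for every $j$, no number $\mathrm{i}\Omega\ell$ with $\ell\in\mathbb{Z}$ is a generalized eigenvalue of $(\boldsymbol{A},\boldsymbol{B})$, so for small $\epsilon$ the origin persists as a unique, exponentially stable periodic orbit $\boldsymbol{z}=\boldsymbol{z}_\epsilon(\phi)$ with $\boldsymbol{z}_0\equiv\boldsymbol{0}$, depending smoothly on $\epsilon$. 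This orbit is the anchor from which the non-autonomous SSM emanates, and $\mathcal{E}\times S^1$ plays the role of the master subbundle for the linearization along it.

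I would then take the invariance equation~\eqref{eq:invariance} as the defining relation for the unknown embedding $\boldsymbol{W}_\epsilon(\boldsymbol{p},\phi)$ and the unknown reduced vector field $\boldsymbol{R}_\epsilon(\boldsymbol{p},\phi)$, together with the normalization that $\boldsymbol{W}_\epsilon$ is tangent to the master bundle at the anchor and that $D_{\boldsymbol{p}}\boldsymbol{R}_\epsilon|_{\boldsymbol{p}=\boldsymbol{0}}$ has spectrum $\spect(\mathcal{E})$. Solving~\eqref{eq:invariance} order by order in the Taylor expansion in $\boldsymbol{p}$ (Fourier in $\phi$, power series in $\epsilon$), the coefficient of $\boldsymbol{W}_\epsilon$ of multi-index order $(\boldsymbol{a},\boldsymbol{b})$ and Fourier index $\ell$ along an outer eigendirection $\lambda_k\in\spect(\boldsymbol{\Lambda})\setminus\spect(\mathcal{E})$ is obtained by inverting the scalar cohomological operator $\boldsymbol{a}\cdot\boldsymbol{\lambda}^\mathcal{E}+\boldsymbol{b}\cdot\bar{\boldsymbol{\lambda}}^\mathcal{E}+\mathrm{i}\Omega\ell-\lambda_k$, whose real part is $\boldsymbol{a}\cdot\mathrm{Re}(\boldsymbol{\lambda}^\mathcal{E})+\boldsymbol{b}\cdot\mathrm{Re}(\bar{\boldsymbol{\lambda}}^\mathcal{E})-\mathrm{Re}(\lambda_k)$; the stated non-resonance condition makes this nonzero for all $2\le|\boldsymbol{a}+\boldsymbol{b}|\le\Sigma(\mathcal{E})$, so those coefficients are uniquely determined. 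The components directed along the master bundle, where the internal resonances~\eqref{eq:res-inner} produce genuine resonances, cannot be removed and are instead absorbed into $\boldsymbol{R}_\epsilon$ --- which is precisely why $\boldsymbol{R}_\epsilon$ retains nonlinear (resonant) monomials rather than collapsing to its linear part. A near-identity polynomial change of the coordinate $\boldsymbol{p}$ that annihilates all non-resonant terms up to the working order then produces the polynomial $\boldsymbol{R}_\epsilon$ appearing in~\eqref{eq:red-dyn}.

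Since the formal series need not converge even for analytic data, it remains to pass from this formal construction to a genuine, finitely smooth manifold. Here I would invoke the finite-regularity version of the parameterization theorem~\cite{cabre2005parameterization-iii}: because the non-resonance conditions hold through order $\Sigma(\mathcal{E})$ --- the absolute spectral quotient, heuristically measuring how much faster the fastest mode of $\spect(\boldsymbol{\Lambda})$ decays than the slowest mode within $\spect(\mathcal{E})$ --- there is a true invariant manifold of class $C^{\Sigma(\mathcal{E})+1}$ whose $\Sigma(\mathcal{E})$-jet coincides with the formal solution and which depends smoothly on $\epsilon$; reading off $\boldsymbol{W}_\epsilon$, its domain $\mathcal{U}=U\times S^1$, and $\boldsymbol{R}_\epsilon$ yields assertions (i)--(iii). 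Property (ii) of the SSM definition is then the statement that, although a whole family of invariant manifolds tangent to $\mathcal{E}\times S^1$ at the anchor exists (as with slow or center manifolds), they share the same $\Sigma(\mathcal{E})$-jet but differ in their higher-order behaviour, and only one member of this family attains the regularity $C^{\Sigma(\mathcal{E})+1}$; the SSM is thus singled out as the smoothest, which gives uniqueness.

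I expect the principal difficulty to be twofold: first, casting the time-periodic, $\epsilon$-dependent problem into the autonomous parameterization framework while correctly tracking the anchor periodic orbit and the neutral $\phi$-direction; and second, establishing the sharp regularity class $C^{\Sigma(\mathcal{E})+1}$, which is the quantitative heart of the argument and the only place where the full strength of the Cabr\'e--Fontich--de la Llave estimates --- rather than the mere formal solvability of the cohomological equations --- is required. By contrast, the order-by-order solvability and the normalization of $\boldsymbol{R}_\epsilon$ into polynomial form are routine bookkeeping once the cohomological operators are known to be invertible.
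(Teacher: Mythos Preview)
Your proposal is correct and aligns with the paper's treatment: the paper does not prove this theorem in situ but simply cites~\cite{haller2016nonlinear} and the parameterization method of Cabr\'e--Fontich--de la Llave~\cite{cabre2003parameterization-i,cabre2003parameterization-ii,cabre2005parameterization-iii}, which is precisely the machinery you outline. Your sketch of autonomizing via the phase $\phi$, solving the cohomological equations order by order, absorbing inner resonances into $\boldsymbol{R}_\epsilon$, and invoking the finite-regularity parameterization theorem to obtain the $C^{\Sigma(\mathcal{E})+1}$ manifold is exactly how those references establish the result.
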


Although there are infinitely many time-periodic invariant manifolds that are tangent to $\mathcal{E}$ at the origin, the above theorem indicates that there is a unique SSM, which is the smoothest one among all these time-periodic invariant manifolds~\cite{haller2016nonlinear}. The smoothness of the SSM is determined by the absolute spectral quotient $\Sigma(\mathcal{E})$, which can be computed a priori from the spectrum of the linear part of the system. Importantly, $\Sigma(\mathcal{E})$ only depends on the real parts of the eigenvalues. This spectral quotient is, however, generally very large in practical applications~\cite{touze2021model}. Therefore, most computations generally lead only to approximations of the SSM.

The are two essential ingredients for the SSM: the map $\boldsymbol{W}_\epsilon$ and the vector field $\boldsymbol{R}_\epsilon$. The latter determines the reduced dynamics on the time-periodic $2m$-dimensional SSM, and the former maps any trajectories in reduced coordinates $\boldsymbol{p}$ to the corresponding trajectories of the full system whose phase space is of dimension $2n$. Since we generally have $m\ll n$, the SSM and its associated reduced dynamics are naturally used for model reduction. Next we move to the computations of these two ingredients.

\subsection{Computation of the resonant SSM}
We have discussed the computation of $\boldsymbol{W}_\epsilon(\boldsymbol{p},\phi)$ and $\boldsymbol{R}_\epsilon(\boldsymbol{p},\phi)$ in detail in Part I~\cite{li2021nonlinear}, but briefly recall them in this section. Let
\begin{equation}
\label{eq:p-to-qs}
\boldsymbol{p}=(q_1,\bar{q}_1,\cdots,q_m,\bar{q}_m)
\end{equation}
where $q_i$ and $\bar{q}_i$ denote the \emph{parameterization} (also referred to as \emph{normal}) coordinates corresponding to $\boldsymbol{v}_i^{\mathcal{E}}$ and $\bar{\boldsymbol{v}}_i^{\mathcal{E}}$, respectively, we have
\begin{equation}
\label{eq:red-auto-block}
    \boldsymbol{R}_\epsilon(\boldsymbol{p},\phi)=\begin{pmatrix}\boldsymbol{R}_{\epsilon,1}(\boldsymbol{p},\phi)\\\vdots\\\boldsymbol{R}_{\epsilon,m}(\boldsymbol{p},\phi)\end{pmatrix}
\end{equation}
where $\boldsymbol{R}_{\epsilon,i}(\boldsymbol{p},\phi)\in\mathbb{C}^2$ contains a complex conjugate pair components of the vector field associated with the $i$-th pair of master modes $(\boldsymbol{v}_i^{\mathcal{E}},\bar{\boldsymbol{v}}_i^{\mathcal{E}})$.

With a Taylor expansion in $\boldsymbol{p}$, the leading-order approximation to the non-autonomous part of SSM is of the form
\begin{equation}
\label{eq:red-truncation}
\boldsymbol{R}_{\epsilon,i}(\boldsymbol{p},\phi)=\boldsymbol{R}_{i}(\boldsymbol{p})+\epsilon \boldsymbol{S}_{{0},i}(\phi)+\mathcal{O}(\epsilon|\boldsymbol{p}|)
\end{equation}
for $i=1,\cdots,m$, where the autonomous part $\boldsymbol{R}_{i}(\boldsymbol{p})$ is given by
\begin{equation}
\label{eq:auto-ssm-red}
    \boldsymbol{R}_{i}(\boldsymbol{p})=\begin{pmatrix}\lambda_i^{\mathcal{E}}q_i\\\bar{\lambda}_i^{\mathcal{E}}\bar{q}_i\end{pmatrix}+\sum_{(\boldsymbol{l},\boldsymbol{j})\in\mathcal{R}_i}\begin{pmatrix}\gamma(\boldsymbol{l},\boldsymbol{j})\boldsymbol{q}^{\boldsymbol{l}}\bar{\boldsymbol{q}}^{\boldsymbol{j}}\\\bar{\gamma}(\boldsymbol{l},\boldsymbol{j})\boldsymbol{q}^{\boldsymbol{j}}\bar{\boldsymbol{q}}^{\boldsymbol{l}}\end{pmatrix},
\end{equation}
with 
\begin{equation}
\label{eq:def-Ri}
\mathcal{R}_i=\{(\boldsymbol{l},\boldsymbol{j}): \lambda_i^\mathcal{E}\approx\boldsymbol{l}\cdot\boldsymbol{\lambda}^\mathcal{E}+\boldsymbol{j}\cdot\bar{\boldsymbol{\lambda}}^\mathcal{E}\}.
\end{equation}
The non-autonomous part, $\boldsymbol{S}_{{0},i}(\phi)$, in~\eqref{eq:red-truncation} is given by
\begin{equation}
\label{eq:auto-ssm-vec}
\boldsymbol{S}_{{0},i}(\phi)=\begin{pmatrix}{{S}}_{{0},i}e^{\mathrm{i}\phi}\\\bar{{S}}_{{0},i}e^{-\mathrm{i}\phi}\end{pmatrix}
\end{equation}
with
\begin{equation}
{{S}}_{{0},i}=\left\lbrace \begin{array}{cl}
      (\boldsymbol{u}_i^{\mathcal{E}})^\ast\boldsymbol{F}^{\mathrm{a}}   & \text{if} \hspace{2mm} \lambda_i^{\mathcal{E}}\approx\mathrm{i}\Omega \\
      {0}   & \text{otherwise}
    \end{array} \right..
\end{equation}
The subscript ${0}$ in $\boldsymbol{S}_{{0},i}$ and ${S}_{{0},i}$ denotes the zeroth-order (or leading-order) approximation of the non-autonomous part of the reduced dynamics on SSM.
Here ${\boldsymbol{F}}^{\mathrm{a}}\in\mathbb{R}^{2n}$, related to the external forcing $\boldsymbol{F}^{\mathrm{ext}}(\phi)$ via
\begin{equation}
\label{eq:forcing-conj}
\boldsymbol{F}^{\mathrm{ext}}(\phi)={\boldsymbol{F}}^{\mathrm{a}}e^{\mathrm{i}\phi}+{\boldsymbol{F}^{\mathrm{a}}}e^{-\mathrm{i}\phi}.
\end{equation}
The superscript `a' represents the amplitude of the external harmonic forcing.
Accordingly, the embedding map $\boldsymbol{W}_\epsilon(\boldsymbol{p},\phi)$ is decomposed as the sum of autonomous and non-autonomous parts
\begin{equation}
\label{eq:ssm-decomp}
\boldsymbol{W}_\epsilon(\boldsymbol{p},\phi)=\boldsymbol{W}(\boldsymbol{p})+\epsilon\boldsymbol{X}_{{0}}(\phi)+\mathcal{O}(\epsilon|\boldsymbol{p}|).
\end{equation}
Consistently, the subscript $0$ denotes the zeroth-order (or leading-order) approximation of the non-autonomous part of SSM.
The coefficients $\gamma(\boldsymbol{l},\boldsymbol{j})$ in~\eqref{eq:auto-ssm-red} are calculated along with the expansion coefficients for $\boldsymbol{W}_\epsilon(\boldsymbol{p},\phi)$. We refer the reader to Part I and~\cite{SHOBHIT} for more detail on these calculations.
\end{sloppypar}

\section{Bifurcation of periodic orbits}
\label{sec:theos}
As discussed in Part I, when the excitation frequency $\Omega$ is well separated from all natural frequencies of system~\eqref{eq:full-first} for $\epsilon=0$, then the periodic response of the full system at leading order in $\epsilon$ is given by
\begin{equation}
 \boldsymbol{z}(t)=-2\epsilon\mathrm{Re}\left((\boldsymbol{A}-\mathrm{i}\Omega\boldsymbol{B})^{-1}\boldsymbol{F}^{\mathrm{a}}e^{\mathrm{i}\Omega t}\right),
\end{equation}
which is always stable, for $\epsilon>0$ small enough.

When the system is subject to external resonance with the forcing, the response amplitude becomes large. Therefore the nonlinear terms in the equation of motion play an essential role, leading possibly to a bifurcation of periodic orbits. Indeed, as observed in the examples of Part I, both stable and unstable periodic orbits may arise when $\Omega$ is near the natural frequencies. Here, we are interested in the responses of systems subject to an external resonance in addition to the internal resonance assumed in~\eqref{eq:res-inner}. In particular, we assume that the forcing frequency $\Omega$ is resonant with the master eigenvalues in the following way:
\begin{equation}
\label{eq:res-forcing}
    \boldsymbol{\lambda}^{\mathcal{E}}-\mathrm{i}\boldsymbol{r}\Omega\approx0, \,\,\boldsymbol{r}\in\mathbb{Q}^m.
\end{equation}

\begin{sloppypar}
\begin{theorem}
\label{theo-po}
Under the internal resonance condition~\eqref{eq:res-inner} and the external resonance condition~\eqref{eq:res-forcing}, the following statements hold for the resonant SSM $\mathcal{W}(\mathcal{E},\Omega t)$ for $\epsilon>0$ small enough:
\begin{enumerate}[label=(\roman*)]
\item \emph{[Polar reduced dynamics]} Rewriting the parameterization~\eqref{eq:p-to-qs} in the time-periodic, polar form
\begin{equation}
\label{eq:polar-form}
    q_i=\rho_ie^{\mathrm{i}(\theta_i+r_i\Omega t)},\,\,\bar{q}_i=\rho_ie^{-\mathrm{i}(\theta_i+r_i\Omega t)}
\end{equation}
for $i=1,\cdots,m$, we obtain the reduced dynamics~\eqref{eq:red-dyn} on the $2m$-dimensional SSM in the autonomous form
\begin{gather}
\begin{pmatrix}\dot{\rho}_i\\\dot{\theta}_i\end{pmatrix}=\boldsymbol{r}^{\mathrm{p}}_i(\boldsymbol{\rho},\boldsymbol{\theta},\Omega,\epsilon)+\mathcal{O}(\epsilon|\boldsymbol{\rho}|)g_i^\mathrm{p}(\phi),i=1,\cdots,m\nonumber\\
\dot{\phi}=\Omega, \quad (\boldsymbol{\rho},\boldsymbol{\theta})\in\mathbb{R}^m\times\mathbb{T}^m.\label{eq:ode-reduced-slow-polar}
\end{gather}
Here the superscript \emph{p} stands for `polar', $g_i^\mathrm{p}$ is a $2\pi$-periodic function, and
\begin{align}
\boldsymbol{r}^{\mathrm{p}}_i&=\begin{pmatrix}\rho_i\mathrm{Re}(\lambda_i^{\mathcal{E}})\\\mathrm{Im}(\lambda_i^{\mathcal{E}})-r_i\Omega\end{pmatrix}\nonumber\\
& +\sum_{(\boldsymbol{l},\boldsymbol{j})\in\mathcal{R}_i}\boldsymbol{\rho}^{\boldsymbol{l}+\boldsymbol{j}}\boldsymbol{Q}(\rho_i,\varphi_i(\boldsymbol{l},\boldsymbol{j}))\begin{pmatrix}\mathrm{Re}(\gamma(\boldsymbol{l},\boldsymbol{j}))\\\mathrm{Im}(\gamma(\boldsymbol{l},\boldsymbol{j}))\end{pmatrix}\nonumber\\
& +\epsilon\boldsymbol{Q}(\rho_i,-\theta_i)\begin{pmatrix}\mathrm{Re}(f_i)\\\mathrm{Im}(f_i)\end{pmatrix}
\end{align}
with $\mathcal{R}_i$ defined in~\eqref{eq:def-Ri} and with ${\varphi}_i$ and $\boldsymbol{Q}$ defined as
\begin{gather}
\varphi_i(\boldsymbol{l},\boldsymbol{j})=\langle \boldsymbol{l}-\boldsymbol{j}-\mathbf{e}_i,\boldsymbol{\theta} \rangle,\label{eq:varphi-ang}\\
\boldsymbol{Q}(\rho,\theta) = \begin{pmatrix}\cos \theta & -\sin \theta\\\frac{1}{\rho}\sin \theta&\frac{1}{\rho}\cos\theta\end{pmatrix},\\
f_i=\left\lbrace \begin{array}{cl}
      (\boldsymbol{u}_i^{\mathcal{E}})^\ast\boldsymbol{F}^a   & \text{if}\hspace{2mm}  r_i=1 \\
      {0}   & \text{otherwise}
    \end{array} \right. \label{eq:fi}.
\end{gather}
Here $\mathbf{e}_i\in\mathbb{R}^m$ is the unit vector aligned with the $i$-th axis.
\item {\emph{[Cartesian reduced dynamics]}} Rewriting the parameterization~\eqref{eq:p-to-qs} in the form
\begin{gather}
    q_i=q_{i,\mathrm{s}}e^{\mathrm{i}r_i\Omega t}=(q_{i,\mathrm{s}}^\mathrm{R}+\mathrm{i}q_{i,\mathrm{s}}^\mathrm{I})e^{\mathrm{i}r_i\Omega t},\nonumber\\
    \label{eq:cartesian-form}
    \bar{q}_i=\bar{q}_{i,\mathrm{s}}e^{-\mathrm{i}r_i\Omega t}=(q_{i,\mathrm{s}}^\mathrm{R}-\mathrm{i}q_{i,\mathrm{s}}^\mathrm{I})e^{-\mathrm{i}r_i\Omega t},
\end{gather}
for $i=1,\cdots,m$, where $q_{i,\mathrm{s}}^\mathrm{R}=\mathrm{Re}(q_{i,\mathrm{s}})$ and $q_{i,\mathrm{s}}^\mathrm{I}=\mathrm{Im}(q_{i,\mathrm{s}})$, we obtain the reduced dynamics~\eqref{eq:red-dyn} on the SSM in \emph{Cartesian} coordinates $(\boldsymbol{q}_{\mathrm{s}}^\mathrm{R},\boldsymbol{q}_{\mathrm{s}}^\mathrm{I})\in\mathbb{R}^m\times\mathbb{R}^m$ as
\begin{equation}
\label{eq:ode-reduced-slow-cartesian}
\begin{pmatrix}\dot{q}_{i,\mathrm{s}}^\mathrm{R}\\\dot{q}_{i,\mathrm{s}}^\mathrm{I}\end{pmatrix}=\boldsymbol{r}^{\mathrm{c}}_i(\boldsymbol{q}_{\mathrm{s}},\Omega,\epsilon)+\mathcal{O}(\epsilon|\boldsymbol{p}|)g_i^\mathrm{c}(\phi)
\end{equation}
for $i=1,\cdots,m$. Here the superscript \emph{c} stands for `Cartesian', $g_i^\mathrm{c}$ is a $2\pi$-periodic function, and
\begin{align}
 \boldsymbol{r}^{\mathrm{c}}_i  &=\begin{pmatrix}\mathrm{Re}(\lambda_i^{\mathcal{E}}) & r_i\Omega-\mathrm{Im}(\lambda_i^{\mathcal{E}})\\
\mathrm{Im}(\lambda_i^{\mathcal{E}})-r_i\Omega & \mathrm{Re}(\lambda_i^{\mathcal{E}})\end{pmatrix}\begin{pmatrix}q_{i,\mathrm{s}}^{\mathrm{R}}\\q_{i,\mathrm{s}}^{\mathrm{I}}\end{pmatrix}
\nonumber\\
& +\sum_{(\boldsymbol{l},\boldsymbol{j})\in\mathcal{R}_i}\begin{pmatrix}\mathrm{Re}\left(\gamma(\boldsymbol{l},\boldsymbol{j})\boldsymbol{q}_s^{\boldsymbol{l}}\bar{\boldsymbol{q}}_s^{\boldsymbol{j}}\right)\\\mathrm{Im}\left(\gamma(\boldsymbol{l},\boldsymbol{j})\boldsymbol{q}_s^{\boldsymbol{l}}\bar{\boldsymbol{q}}_s^{\boldsymbol{j}}\right)\end{pmatrix}+\epsilon\begin{pmatrix}\mathrm{Re}(f_i)\\\mathrm{Im}(f_i)\end{pmatrix}.
\end{align}
\item Any hyperbolic fixed point of the leading-order truncation of~\eqref{eq:ode-reduced-slow-polar} or~\eqref{eq:ode-reduced-slow-cartesian}, namely,
\begin{equation}
\begin{pmatrix}\dot{\rho}_i\\\dot{\theta}_i\end{pmatrix}=\boldsymbol{r}^{\mathrm{p}}_i(\boldsymbol{\rho},\boldsymbol{\theta},\Omega,\epsilon),\quad i=1,\cdots, m,\label{eq:ode-reduced-slow-polar-leading}
\end{equation} or
\begin{equation}
\label{eq:ode-reduced-slow-cartesian-leading}
\begin{pmatrix}\dot{q}_{i,\mathrm{s}}^\mathrm{R}\\\dot{q}_{i,\mathrm{s}}^\mathrm{I}\end{pmatrix}=\boldsymbol{r}^{\mathrm{c}}_i(\boldsymbol{q}_{\mathrm{s}},\Omega,\epsilon),\quad i=1,\cdots,m,
\end{equation}
corresponds to a periodic solution $\boldsymbol{p}(t)$ of the reduced dynamical system~\eqref{eq:red-dyn} on the SSM, $\mathcal{W}(\mathcal{E},\Omega t)$. If $r_\mathrm{d}$ defines the largest common divisor for the set of rational numbers $\{r_1,\cdots,r_m\}$, then the period of $\boldsymbol{p}(t)$ is given by $T=\frac{2\pi}{r_\mathrm{d}\Omega}$. The stability type of a hyperbolic fixed point of ~\eqref{eq:ode-reduced-slow-polar-leading} or~\eqref{eq:ode-reduced-slow-cartesian-leading} coincides with the stability type of the corresponding periodic solution on $\mathcal{W}(\mathcal{E},\Omega t)$.
\item A saddle-node (SN) bifurcation of a fixed point of the vector field~\eqref{eq:ode-reduced-slow-polar-leading} or~\eqref{eq:ode-reduced-slow-cartesian-leading} corresponds to a SN bifurcation of a periodic orbit of the reduced dynamical system~\eqref{eq:red-dyn} on $\mathcal{W}(\mathcal{E},\Omega t)$.
\item A Hopf bifurcation (HB) of a fixed point of~\eqref{eq:ode-reduced-slow-polar-leading} or~\eqref{eq:ode-reduced-slow-cartesian-leading} corresponds to a torus (TR) bifurcation of a periodic orbit of the reduced dynamical system~\eqref{eq:red-dyn} on $\mathcal{W}(\mathcal{E},\Omega t)$.  
\end{enumerate}
\end{theorem}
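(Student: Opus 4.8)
The plan is to derive both normal forms directly by substituting the respective rotating-frame ans\"atze \eqref{eq:polar-form} and \eqref{eq:cartesian-form} into the leading-order reduced vector field assembled from \eqref{eq:auto-ssm-red} and \eqref{eq:auto-ssm-vec}, then dividing out the common rotating phase and separating real and imaginary parts. Both representations share the same mechanism: the near-resonant monomials retained in $\mathcal{R}_i$ and the resonant forcing term lose their explicit $\Omega t$ dependence once we pass to the frame co-rotating at frequency $r_i\Omega$, leaving an autonomous vector field at leading order. First I would record the $i$-th complex component extracted from \eqref{eq:red-truncation}, namely $\dot q_i=\lambda_i^{\mathcal{E}}q_i+\sum_{(\boldsymbol l,\boldsymbol j)\in\mathcal{R}_i}\gamma(\boldsymbol l,\boldsymbol j)\boldsymbol q^{\boldsymbol l}\bar{\boldsymbol q}^{\boldsymbol j}+\epsilon S_{0,i}e^{\mathrm{i}\phi}+\mathcal{O}(\epsilon|\boldsymbol p|)$ with $\phi=\Omega t$, which serves as the common starting point for both coordinate changes.

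For the polar form (i), I substitute $q_i=\rho_i e^{\mathrm{i}(\theta_i+r_i\Omega t)}$, compute $\dot q_i=(\dot\rho_i+\mathrm{i}\rho_i\dot\theta_i+\mathrm{i}\rho_i r_i\Omega)e^{\mathrm{i}(\theta_i+r_i\Omega t)}$, and multiply the resulting identity by $e^{-\mathrm{i}(\theta_i+r_i\Omega t)}$. The linear term contributes $\lambda_i^{\mathcal{E}}\rho_i$, whose real and imaginary parts give the first summand $(\rho_i\mathrm{Re}(\lambda_i^{\mathcal{E}}),\,\mathrm{Im}(\lambda_i^{\mathcal{E}})-r_i\Omega)^\top$ of $\boldsymbol r_i^{\mathrm p}$ once the $\mathrm{i}\rho_i r_i\Omega$ term is moved from the left-hand side. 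Each monomial becomes $\boldsymbol\rho^{\boldsymbol l+\boldsymbol j}\gamma(\boldsymbol l,\boldsymbol j)\exp(\mathrm{i}\langle\boldsymbol l-\boldsymbol j-\mathbf{e}_i,\boldsymbol\theta\rangle)\exp(\mathrm{i}\Omega t[\langle\boldsymbol l-\boldsymbol j,\boldsymbol r\rangle-r_i])$; the angular argument is exactly $\varphi_i(\boldsymbol l,\boldsymbol j)$ of \eqref{eq:varphi-ang}, and the residual time factor is shown to be unity below. Writing $\gamma e^{\mathrm{i}\varphi_i}$ in rectangular form and splitting into the $\dot\rho_i$ and $\rho_i\dot\theta_i$ channels reproduces precisely the rotation-and-scaling matrix $\boldsymbol Q(\rho_i,\varphi_i)$. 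The forcing term $\epsilon S_{0,i}e^{\mathrm{i}\phi}e^{-\mathrm{i}(\theta_i+r_i\Omega t)}$ survives only when $r_i=1$, in which case $S_{0,i}=f_i$ and it reduces to $\epsilon f_i e^{-\mathrm{i}\theta_i}$, yielding the $\boldsymbol Q(\rho_i,-\theta_i)$ contribution; all other forcing terms retain a factor $e^{\mathrm{i}(1-r_i)\Omega t}$ and are pushed into the $\mathcal{O}(\epsilon|\boldsymbol\rho|)g_i^{\mathrm p}(\phi)$ remainder of \eqref{eq:ode-reduced-slow-polar}.

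The Cartesian form (ii) is obtained the same way but is structurally simpler. Substituting $q_i=q_{i,\mathrm s}e^{\mathrm{i}r_i\Omega t}$ and cancelling $e^{\mathrm{i}r_i\Omega t}$ yields $\dot q_{i,\mathrm s}=(\lambda_i^{\mathcal{E}}-\mathrm{i}r_i\Omega)q_{i,\mathrm s}+\dots$; splitting both $\lambda_i^{\mathcal{E}}-\mathrm{i}r_i\Omega$ and $q_{i,\mathrm s}=q_{i,\mathrm s}^{\mathrm R}+\mathrm{i}q_{i,\mathrm s}^{\mathrm I}$ into real and imaginary parts gives exactly the $2\times2$ linear block of $\boldsymbol r_i^{\mathrm c}$. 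Each resonant monomial becomes $\gamma(\boldsymbol l,\boldsymbol j)\boldsymbol q_{\mathrm s}^{\boldsymbol l}\bar{\boldsymbol q}_{\mathrm s}^{\boldsymbol j}$ times the same residual factor $e^{\mathrm{i}\Omega t[\langle\boldsymbol l-\boldsymbol j,\boldsymbol r\rangle-r_i]}$, the resonant forcing becomes $\epsilon f_i$, and taking real and imaginary parts reproduces the stated nonlinear and forcing contributions, with the non-resonant and higher-order pieces again collected into $\mathcal{O}(\epsilon|\boldsymbol p|)g_i^{\mathrm c}(\phi)$.

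The crux of the argument, and the step I expect to require the most care, is showing that the residual exponentials $e^{\mathrm{i}\Omega t[\langle\boldsymbol l-\boldsymbol j,\boldsymbol r\rangle-r_i]}$ for resonant monomials and $e^{\mathrm{i}(1-r_i)\Omega t}$ for the forcing are identically one, so that the leading-order field is genuinely autonomous rather than merely slowly varying. This reduces to the identities $\langle\boldsymbol l-\boldsymbol j,\boldsymbol r\rangle=r_i$ for every $(\boldsymbol l,\boldsymbol j)\in\mathcal{R}_i$ and $r_i=1$ whenever $f_i\neq0$. I would establish the former by taking imaginary parts in the internal-resonance relation \eqref{eq:res-inner} defining $\mathcal{R}_i$ and matching them against the external-resonance relation \eqref{eq:res-forcing}, which sets $\mathrm{Im}(\lambda_k^{\mathcal{E}})=r_k\Omega$; the compatibility of \eqref{eq:res-inner} and \eqref{eq:res-forcing} on the imaginary axis then forces $\langle\boldsymbol l-\boldsymbol j,\boldsymbol r\rangle=r_i$ on the resonance lattice. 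The latter identity follows because, by \eqref{eq:fi}, $f_i$ is nonzero only under the primary external resonance $\lambda_i^{\mathcal{E}}\approx\mathrm{i}\Omega$, i.e.\ $r_i=1$. Once these identities are in hand, the autonomy of \eqref{eq:ode-reduced-slow-polar-leading} and \eqref{eq:ode-reduced-slow-cartesian-leading} is immediate and every explicitly $\phi$-dependent piece is confined to the $\mathcal{O}(\epsilon|\boldsymbol p|)$ remainders.
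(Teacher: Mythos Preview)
Your proposal addresses only parts (i) and (ii) of the theorem --- the derivation of the polar and Cartesian slow-phase vector fields --- and does so by a direct substitution-and-cancellation argument that is essentially correct and is indeed how these formulas are obtained. However, the paper treats (i)--(iii) as already established in Part~I and devotes its proof entirely to statements (iv) and (v), which you do not touch at all. These are the substantive dynamical claims: that a saddle-node (resp.\ Hopf) bifurcation of a fixed point of the truncated slow system persists as a saddle-node (resp.\ torus) bifurcation of a periodic orbit of the full reduced dynamics \eqref{eq:red-dyn}, including the higher-order time-periodic remainder. Nothing in your argument addresses persistence under that remainder.

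The paper's proof of (iv)--(v) proceeds by a rescaling $\mathbf{x}=\mu\hat{\mathbf{x}}$, $\mu=\epsilon^{1-1/k}$, that casts the Cartesian slow dynamics as a slow--fast system $\dot{\hat{\mathbf{x}}}=\nu\mathcal{F}(\hat{\mathbf{x}},\mu,\Omega)+\nu\mu\,\mathcal{O}(|\hat{\mathbf{x}}|)\mathbf{G}(\phi)$. One then compares the period-$T$ Poincar\'e maps $\mathbf{P}_2$ (truncated), $\mathbf{P}_0$ (limit $\mu=0$), and $\mathbf{P}_1$ (full), together with their scaled fixed-point functions $\mathbf{H}_i=(\mathbf{P}_i-\mathrm{id})/\nu$. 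Because $\mathbf{H}_1$ and $\mathbf{H}_2$ are both $\mu$-close to $\mathbf{H}_0$, the implicit-function-theorem/normal-form structure of a saddle-node or Neimark--Sacker bifurcation for $\mathbf{P}_2$ transfers to $\mathbf{P}_1$, with the eigenvalue crossing and (for the Hopf case) the absence of strong resonance preserved by closeness of the Jacobians. This averaging/Poincar\'e-map argument is the core of the proof and is entirely absent from your proposal.

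One minor correction to your treatment of (i): when $r_i\neq1$ you say the forcing term is ``pushed into the $\mathcal{O}(\epsilon|\boldsymbol\rho|)g_i^{\mathrm p}(\phi)$ remainder.'' That cannot be right, since such a term would be $\mathcal{O}(\epsilon)$ uniformly in $\boldsymbol\rho$. The actual mechanism is that $S_{0,i}$ is \emph{defined} to vanish unless $\lambda_i^{\mathcal{E}}\approx\mathrm{i}\Omega$, so there is simply no leading-order forcing term to dispose of when $r_i\neq1$.
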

\begin{proof}
We present the proof of this theorem in Appendix~\ref{sec:appendix-theo1}.
\end{proof}
\end{sloppypar}

Although the vector field $\boldsymbol{R}_\epsilon(\boldsymbol{p},\phi)$ (see~\eqref{eq:red-dyn}) is $\phi$-dependent, (i)-(ii) in the above theorem suggest that one can factor out the $\phi$-dependent terms via proper coordinate transformations between the reduced coordinates $\boldsymbol{p}$ and the polar coordinates $(\boldsymbol{\rho},\boldsymbol{\theta})$~\eqref{eq:polar-form} or the Cartesian coordinates $\boldsymbol{q}_\mathrm{s}$~\eqref{eq:cartesian-form}. These transformations yield simplified reduced dynamics~\eqref{eq:ode-reduced-slow-polar} and~\eqref{eq:ode-reduced-slow-cartesian}. The use of the method of normal forms in the computation of $\boldsymbol{R}_\epsilon(\boldsymbol{p},\phi)$ enables the simplification.

As a result of the above simplification, a fixed point of the leading-order reduced dynamics~\eqref{eq:ode-reduced-slow-polar-leading} in $(\boldsymbol{\rho},\boldsymbol{\theta})$ or~\eqref{eq:ode-reduced-slow-cartesian-leading} in $\boldsymbol{q}_\mathrm{s}$ corresponds to a periodic orbit of the leading-order dynamics in $\boldsymbol{p}$. In addition, if the fixed point is hyperbolic, the corresponding periodic orbit $\boldsymbol{p}(t)$ is structurally stable under the addition of the higher-order terms, and the stability type of the periodic orbit $\boldsymbol{p}(t)$ on the SSM $\mathcal{W}(\mathcal{E},\Omega t)$ is the same as that of the fixed point, as indicated by (iii). Therefore, the task of finding the periodic orbit $\boldsymbol{p}(t)$ on the SSM is converted into a much simpler task: locating the fixed point of the leading-order dynamics~\eqref{eq:ode-reduced-slow-polar-leading} or~\eqref{eq:ode-reduced-slow-cartesian-leading}. Importantly, the stability type of the periodic orbit is easily inferred from that of the fixed point.

Further, (iv)-(v) in the above theorem establish the relations between the bifurcations of the fixed point and the bifurcations of the corresponding periodic orbit on the SSM $\mathcal{W}(\mathcal{E},\Omega t)$. In particular, one can predict the saddle-node/torus bifurcations of the periodic orbit from the saddle-node/Hopf bifurcations of the fixed point.

In Theorem~\ref{theo-po}, we have given two coordinate representations for the reduced dynamics of the resonant SSM. This is because the polar coordinate representation may have a singularity, as detailed in Part I. A periodic orbit in the reduced dynamics~\eqref{eq:red-dyn} is a trajectory on the invariant manifold $\mathcal{W}(\mathcal{E},\Omega t)$ and hence one can obtain the corresponding periodic orbit of the full system via the map $\boldsymbol{W}_\epsilon(\boldsymbol{p},\phi)$ (see eq.~\eqref{eq:ssm-decomp}). In addition, the stability type of the periodic orbit on $\mathcal{W}(\mathcal{E},\Omega t)$ holds for that of the full system as well because $\mathcal{W}(\mathcal{E},\Omega t)$ is attracting~\cite{haller2016nonlinear}. Further, a bifurcation observed in the reduced dynamics on $\mathcal{W}(\mathcal{E},\Omega t)$ also holds for the full system. Therefore, we can infer the bifurcation of periodic orbits in the high-dimensional system~\eqref{eq:full-first} based on the the reduced-order model on its SSM.

Without SSM reduction, the detection of bifurcations of periodic orbits in a high-dimensional system is generally nontrivial. Firstly, the computation of the monodromy matrix of a periodic orbit and the spectrum of that matrix are computationally expensive. Secondly, the event functions for detecting periodic orbit bifurcations can have numerical issues, as detailed in Appendix~\ref{sec:event-bif-po}. These challenges are not encountered under SSM-reduction because the periodic orbits are solved for as fixed points in a phase space of significantly reduced dimension.

\section{Quasi-periodic orbits}
\label{sec:tori}
\subsection{Two-dimensional tori and their bifurcations}
\begin{sloppypar}
In the continuation of fixed points in the leading-order reduced dynamics (see eqs.~\eqref{eq:ode-reduced-slow-polar-leading} or~\eqref{eq:ode-reduced-slow-cartesian-leading}), the fixed points may exhibit Hopf bifurcations. In such a bifurcation, a unique limit cycle will emerge from the fixed point of the reduced dynamics on the SSM, giving rise to a one-dimensional manifold of limit cycles under variations of $\Omega$ or $\epsilon$. For each limit cycle in the leading-order dynamics, a two-dimensional invariant torus is obtained in the truncated reduced dynamics of the parameterization $\boldsymbol{p}$ (see~\eqref{eq:red-dyn},~\eqref{eq:red-auto-block} and~\eqref{eq:red-truncation}), given by
\begin{equation}
\label{eq:red-dyn-leading}
    \dot{\boldsymbol{p}}=\boldsymbol{R}(\boldsymbol{p})+\epsilon\boldsymbol{S}_{0}(\phi).
\end{equation}
Then a invariant torus in the full system~\eqref{eq:full-first} can be further obtained via the truncated embedding map (see~\eqref{eq:ssm-decomp}) 
\begin{equation}
\label{eq:ssm-decomp-leading}
\boldsymbol{W}_\epsilon(\boldsymbol{p},\phi)=\boldsymbol{W}(\boldsymbol{p})+\epsilon\boldsymbol{X}_{0}(\phi).
\end{equation}
We denote the approximate SSM corresponding to the above truncation by $\mathcal{W}_{{0}}(\mathcal{E},\Omega t)$ and summarize our discussion as follows:

\begin{theorem}
\label{theo-3}
Consider a periodic orbit of the leading-order SSM dynamics~\eqref{eq:ode-reduced-slow-polar-leading} or~\eqref{eq:ode-reduced-slow-cartesian-leading}. Let $T_\mathrm{s}$ be the period of this orbit and define the \emph{internal} frequency $\omega_\mathrm{s}=2\pi/T_\mathrm{s}$ and \emph{rotation number} $\varrho=\frac{\omega_\mathrm{s}}{r_\mathrm{d}\Omega}$. The following statements hold:
\begin{enumerate}[label=(\roman*)]
\item If $\varrho\in\mathbb{Q}$, then the corresponding solution $\boldsymbol{p}(t)$ to truncated SSM dynamics~\eqref{eq:red-dyn-leading} is a periodic orbit on the approximate SSM, $\mathcal{W}_{{0}}(\mathcal{E},\Omega t)$. Otherwise, the solution $\boldsymbol{p}(t)$ densely covers the surface of a two-dimensional invariant torus on the $\mathcal{W}_{{0}}(\mathcal{E},\Omega t)$.
\item The stability type of the solution $\boldsymbol{p}(t)$ is the same as that of the periodic orbit to the leading-order SSM dynamics~\eqref{eq:ode-reduced-slow-polar-leading} or~\eqref{eq:ode-reduced-slow-cartesian-leading}.
\item If $\varrho\not\in\mathbb{Q}$, a saddle-node/period-doubling/torus bifurcation of a periodic orbit of~\eqref{eq:ode-reduced-slow-polar-leading} or~\eqref{eq:ode-reduced-slow-cartesian-leading} corresponds to a quasi-periodic saddle-node/period-doubling/Hopf bifurcation of a two-dimensional torus of~\eqref{eq:red-dyn-leading}.
\end{enumerate}
\end{theorem}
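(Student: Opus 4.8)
The plan is to reduce everything to the geometry of the suspension flow associated with the leading-order reduced dynamics. Recall from Theorem~\ref{theo-po}(i)--(ii) that, after passing to the polar coordinates~\eqref{eq:polar-form} or the Cartesian coordinates~\eqref{eq:cartesian-form}, the leading-order reduced vector field on the SSM in the $\boldsymbol{p}$-variables, given by~\eqref{eq:red-dyn-leading}, becomes the \emph{autonomous} system~\eqref{eq:ode-reduced-slow-polar-leading} or~\eqref{eq:ode-reduced-slow-cartesian-leading} coupled with $\dot\phi=\Omega$; equivalently, the $\boldsymbol{p}$-flow of~\eqref{eq:red-dyn-leading} is conjugate to the suspension of the autonomous slow flow by the constant rotation $\phi\mapsto\phi+r_\mathrm{d}\Omega t$ on the circle $S^1=\mathbb{R}/\tfrac{2\pi}{r_\mathrm{d}}\mathbb{Z}$. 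Fix a periodic orbit $\Gamma_\mathrm{s}$ of the slow system with minimal period $T_\mathrm{s}$, internal frequency $\omega_\mathrm{s}=2\pi/T_\mathrm{s}$, and rotation number $\varrho=\omega_\mathrm{s}/(r_\mathrm{d}\Omega)$. Then the set $\mathcal{T}=\{(\boldsymbol{p},\phi): \boldsymbol{p}\in\Gamma_\mathrm{s}\text{ in slow coords}\}$ is, after undoing the $\phi$-dependent transformation, an invariant set of~\eqref{eq:red-dyn-leading} that is the image of $S^1_{\Gamma_\mathrm{s}}\times S^1_\phi$ under a smooth embedding: topologically a two-torus carrying the linear flow with frequency vector $(\omega_\mathrm{s},r_\mathrm{d}\Omega)$.

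For (i): the trajectory $\boldsymbol{p}(t)$ on $\mathcal{T}$ is exactly an orbit of this linear flow on $\mathbb{T}^2$. By the classical dichotomy for linear flows on the two-torus, the orbit is closed (periodic) if the frequency ratio $\varrho=\omega_\mathrm{s}/(r_\mathrm{d}\Omega)$ is rational, and equidistributed — hence dense — in $\mathbb{T}^2$ if $\varrho$ is irrational; when $\varrho\in\mathbb{Q}$ the closed orbit in the $\boldsymbol{p}$-space has period equal to the least common period $\mathrm{lcm}$ of $T_\mathrm{s}$ and $2\pi/(r_\mathrm{d}\Omega)$. In the irrational case this $\mathcal{T}$ is precisely a two-dimensional invariant torus on the approximate SSM $\mathcal{W}_0(\mathcal{E},\Omega t)$, since $\mathcal{W}_0$ is the image of the $\boldsymbol{p}$-space under the embedding $\boldsymbol{W}_\epsilon$ in~\eqref{eq:ssm-decomp-leading}, which maps invariant sets to invariant sets.

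For (ii): the Floquet/normal stability of $\boldsymbol{p}(t)$ as a trajectory of~\eqref{eq:red-dyn-leading} is governed by the variational equation along $\mathcal{T}$, which — because the only coupling to $\phi$ is through the explicitly known, bounded transformation and $\dot\phi=\Omega$ contributes no Lyapunov exponent — decomposes into the trivial direction along the flow, the neutral direction along $\phi$, and the transverse directions whose exponents are exactly the nontrivial Floquet exponents of $\Gamma_\mathrm{s}$ as a periodic orbit of the slow system~\eqref{eq:ode-reduced-slow-polar-leading} or~\eqref{eq:ode-reduced-slow-cartesian-leading}. Hence the stability type (stable node/focus, saddle, repeller) of the torus equals that of $\Gamma_\mathrm{s}$.

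For (iii): this is the parametrized version of the same correspondence. As $\Omega$ (or $\epsilon$) varies, a codimension-one bifurcation of $\Gamma_\mathrm{s}$ in the slow system is detected by the nontrivial Floquet multipliers of $\Gamma_\mathrm{s}$: a multiplier passing through $+1$ (saddle-node), through $-1$ (period-doubling), or a complex conjugate pair crossing the unit circle (Neimark--Sacker/torus). By the exponent correspondence just established, the same crossings occur for the \emph{reduced} Floquet multipliers of the invariant torus $\mathcal{T}$ of~\eqref{eq:red-dyn-leading}; under the genericity hypothesis $\varrho\notin\mathbb{Q}$ these are exactly the defining conditions for, respectively, a quasi-periodic saddle-node, a quasi-periodic period-doubling, and a quasi-periodic Hopf bifurcation of the two-dimensional torus, as formulated in quasi-periodic bifurcation theory~\cite{broer2009quasi}. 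Invoking the standard normal-form/nondegeneracy conditions for these bifurcations (transversal crossing, nonzero cubic coefficient, absence of strong resonances) then transfers the named bifurcation of $\Gamma_\mathrm{s}$ to the named bifurcation of $\mathcal{T}$; because $\varrho$ is irrational these resonance exclusions hold generically along the branch.

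The main obstacle I anticipate is item (iii): the clean Floquet-exponent bookkeeping of (i)--(ii) is essentially a change of coordinates, but asserting a genuine \emph{quasi-periodic} bifurcation of the torus requires more than a multiplier crossing — one must ensure the relevant nondegeneracy and non-resonance (Diophantine) conditions, and in the period-doubling and torus cases the bifurcating object is itself a (possibly Cantorized) family of tori. I would handle this by citing the established quasi-periodic bifurcation theory for the constant-frequency (reducible) case and noting that, since the torus here is a trivial suspension of a periodic orbit with one externally fixed frequency $r_\mathrm{d}\Omega$, the reducibility is automatic and the Diophantine requirement reduces to the stated $\varrho\notin\mathbb{Q}$ (together with generic avoidance of the finitely many strong resonances as parameters vary), which keeps the argument at the level of a plan rather than a re-derivation of that theory.
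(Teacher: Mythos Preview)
Your proposal is correct and follows essentially the same approach as the paper: both exploit that the truncated dynamics~\eqref{eq:red-dyn-leading} is, via the transformations~\eqref{eq:polar-form}/\eqref{eq:cartesian-form}, a product of the autonomous slow flow with a rigid circle rotation, so that the periodic orbit $\Gamma_\mathrm{s}$ lifts to a $2$-torus carrying a linear flow with rotation number $\varrho$, and the stability and bifurcation type transfer from $\Gamma_\mathrm{s}$ to that torus. The paper phrases this via an explicit period-$T_\mathrm{s}$ Poincar\'e map rather than your suspension-flow language, and for (iii) simply invokes the definitions of quasi-periodic SN/PD/Hopf bifurcations from~\cite{kim1996quasi,vitolo2011quasi} without the careful nondegeneracy discussion you flag---so your treatment of (iii) is, if anything, more scrupulous than the paper's.
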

\begin{proof}
We present the proof of this theorem in Appendix~\ref{sec:appendix-theo3-proof}.
\end{proof}

Theorem~\ref{theo-3} implies that a periodic orbit in the leading-order SSM dynamics~\eqref{eq:ode-reduced-slow-polar-leading} or~\eqref{eq:ode-reduced-slow-cartesian-leading} always corresponds to a two-dimensional invariant torus of the truncated reduced dynamics~\eqref{eq:red-dyn-leading}. Such a torus is denoted as $\boldsymbol{p}_\mathrm{tor2}$ here. With the map $\boldsymbol{W}_\epsilon(\boldsymbol{p},\phi)$ (see eq.~\eqref{eq:ssm-decomp-leading}) applied, the corresponding solution of the full system, $\boldsymbol{z}_\mathrm{tor2}$, is obtained. In addition, $\boldsymbol{W}_\epsilon(\boldsymbol{p},\phi)$ maps the closed invariant curve of $\boldsymbol{p}_\mathrm{tor2}$ under the period-$T$ map of the flow to a closed invariant curve of $\boldsymbol{z}_\mathrm{tor2}$ under its period-$T$ map. Hence, the corresponding solution $\boldsymbol{z}_\mathrm{tor2}$ in the full system is also a two-dimensional invariant torus. In addition, the stability type of the torus $\boldsymbol{z}_\mathrm{tor2}$ is the same as that of the periodic orbit in the slow-phase dynamics. Moreover, we can infer the bifurcations of the torus $\boldsymbol{z}_\mathrm{tor2}$ from the bifurcations of the periodic orbit.
\end{sloppypar}

\subsection{Three-dimensional tori}
\begin{sloppypar}
At a torus bifurcation of a periodic orbit in the leading-order SSM dynamics~\eqref{eq:ode-reduced-slow-polar-leading} or~\eqref{eq:ode-reduced-slow-cartesian-leading}, a unique two-dimensional invariant torus bifurcates from the periodic orbit. Further, a one-dimensional solution manifold of two-dimensional invariant tori is obtained under the variation of $\Omega$ or $\epsilon$. An invariant two-dimensional torus on the manifold generically corresponds to a three-dimensional invariant torus solution to~\eqref{eq:red-dyn-leading}. Then a three-dimensional invariant torus in the full system can be further obtained via the map~\eqref{eq:ssm-decomp-leading}. To formalize these statements, we have:
\begin{proposition}
\label{prop4}
Consider a two-dimensional invariant torus with frequencies $\omega_{1,\mathrm{s}}$ and $\omega_{2,\mathrm{s}}$ of the leading-order reduced dynamics~\eqref{eq:ode-reduced-slow-polar-leading} or~\eqref{eq:ode-reduced-slow-cartesian-leading}. This 2-torus them implies the existence of an invariant 3-torus of the same stability type in the truncated SSM-reduced dynamics~\eqref{eq:red-dyn-leading}. If $\Omega$, $\omega_{1,\mathrm{s}}$ and $\omega_{2,\mathrm{s}}$ are rationally independent, then the 3-torus is foilated by a one-parameter family of 2-tori. If $\Omega$, $\omega_{1,\mathrm{s}}$ and $\omega_{2,\mathrm{s}}$ are rationally dependent, then the 3-torus is densely filled with quasi-periodic orbits. Precisely one family of their orbits with $\phi(0)=0$ represents quasi-periodic orbit of the original system~\eqref{eq:full-first}.
\end{proposition}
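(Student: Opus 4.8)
The plan is to obtain Proposition~\ref{prop4} as the one-dimension-higher analogue of Theorem~\ref{theo-3}, combined with the classical equidistribution theory of linear flows on tori. The starting point is Theorem~\ref{theo-po}(i)--(ii): the time-periodic polar change of variables~\eqref{eq:polar-form} (or the Cartesian one~\eqref{eq:cartesian-form}) conjugates the flow of the truncated reduced system~\eqref{eq:red-dyn-leading}, regarded as the autonomous system $\dot{\boldsymbol p}=\boldsymbol R(\boldsymbol p)+\epsilon\boldsymbol S_0(\phi)$, $\dot\phi=\Omega$, to the flow of the autonomous slow-phase system~\eqref{eq:ode-reduced-slow-polar-leading} (resp.~\eqref{eq:ode-reduced-slow-cartesian-leading}) trivially augmented by $\dot\phi=\Omega$. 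Writing $\Phi$ for the corresponding coordinate map $(\boldsymbol\rho,\boldsymbol\theta,\phi)\mapsto(\boldsymbol p,\phi)$ (resp.\ $(\boldsymbol q_\mathrm{s},\phi)\mapsto(\boldsymbol p,\phi)$), the key structural facts I would record are that $\Phi$ is a diffeomorphism onto its image, $2\pi$-periodic in $\phi$, and — away from $\rho_i=0$, where one switches to the Cartesian chart to remove the polar singularity — has uniformly bounded derivative and inverse derivative; hence $\Phi$ sends invariant sets to invariant sets and carries transverse linearizations to conjugate ones, so it preserves torus topology and stability type.

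Next I would feed in the given invariant $2$-torus $\mathcal T^2_\mathrm{s}$ of the slow-phase dynamics, on which — as encoded by the phrase ``with frequencies $\omega_{1,\mathrm s},\omega_{2,\mathrm s}$'' — the induced flow is the parallel (Kronecker) flow with rotation vector $(\omega_{1,\mathrm s},\omega_{2,\mathrm s})$. Then $\mathcal T^2_\mathrm{s}\times S^1_\phi$ is an invariant $3$-torus of the augmented slow system carrying the linear flow with frequency vector $(\omega_{1,\mathrm s},\omega_{2,\mathrm s},r_\mathrm{d}\Omega)$ — the scale $r_\mathrm{d}\Omega$ rather than $\Omega$ arises from the usual bookkeeping of the $e^{\mathrm i r_i\Omega t}$ factors, exactly as in Theorem~\ref{theo-po}(iii), but since $r_\mathrm{d}\in\mathbb Q$ the rational (in)dependence of $\{\omega_{1,\mathrm s},\omega_{2,\mathrm s},r_\mathrm{d}\Omega\}$ coincides with that of $\{\Omega,\omega_{1,\mathrm s},\omega_{2,\mathrm s}\}$. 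Applying $\Phi$ produces the embedded invariant $3$-torus $\mathcal T^3:=\Phi(\mathcal T^2_\mathrm{s}\times S^1_\phi)$ of~\eqref{eq:red-dyn-leading}, of the same stability type as $\mathcal T^2_\mathrm{s}$. Its orbit structure is read off from the Kronecker--Weyl equidistribution theorem for a linear flow on $\mathbb T^3$, which is the three-frequency analogue of the $\varrho\in\mathbb Q$ versus $\varrho\notin\mathbb Q$ dichotomy of Theorem~\ref{theo-3}(i): if the three frequencies are rationally independent a single orbit is dense in the whole $\mathcal T^3$, while if they satisfy exactly one independent resonance relation $\mathcal T^3$ is foliated by a one-parameter family of invariant $2$-tori, each densely covered by a two-frequency quasi-periodic orbit (and by periodic orbits if two independent relations hold) — these are the two alternatives in the proposition.

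Finally I would push $\mathcal T^3$ forward by the truncated embedding~\eqref{eq:ssm-decomp-leading}. Since $\boldsymbol W_\epsilon(\cdot,\cdot)$ is an immersion that is injective on the bounded domain of interest, $\boldsymbol z_{\mathrm{tor}3}:=\boldsymbol W_\epsilon(\mathcal T^3)$ is an invariant $3$-torus on the approximate SSM $\mathcal W_{0}(\mathcal E,\Omega t)$, with the same topology, the same orbit foliation, and the same stability type — the last because $\mathcal W_{0}(\mathcal E,\Omega t)$ is attracting, by Theorem~\ref{th:SSM-existence-uniqueness}(i) and the remark after Theorem~\ref{theo-po}. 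It then remains to isolate the physically realized trajectories: in~\eqref{eq:full-first} the forcing phase satisfies $\phi(t)=\Omega t$, hence $\phi(0)=0$, which on $\mathcal T^3$ singles out the two-parameter family of trajectories indexed by the initial slow angles lying in the slice $\{\phi=0\}$; tracing this slice through $\boldsymbol W_\epsilon$ yields precisely the family asserted in the last sentence of the proposition.

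I expect the main obstacle to lie not in the equidistribution step (which is classical) but in the ``sameness'' claims: verifying that $\Phi$ and $\boldsymbol W_\epsilon$ genuinely yield an \emph{embedded} torus — injectivity, together with the removable polar-chart singularity at $\rho_i=0$ handled via the Cartesian chart — and, more importantly, that the stability type is truly preserved, which requires the transverse linearized dynamics along $\mathcal T^3$ to be \emph{conjugate} (not merely the tori to be homeomorphic) and therefore hinges on the uniform boundedness of $D\Phi$ and $D\Phi^{-1}$. Since~\eqref{eq:red-dyn-leading} is only a truncation of the exact SSM dynamics, I would also add the standard closing remark that normal hyperbolicity of $\mathcal T^3$ guarantees persistence of a nearby true invariant $3$-torus of~\eqref{eq:full-first}, exactly as in the corresponding steps of Theorems~\ref{theo-po} and~\ref{theo-3}.
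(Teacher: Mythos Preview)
The paper does not supply a proof of Proposition~\ref{prop4}: unlike Theorems~\ref{theo-po} and~\ref{theo-3}, which are proved in Appendices~\ref{sec:appendix-theo1} and~\ref{sec:appendix-theo3-proof}, the proposition is simply stated and then paraphrased in the paragraph that follows it. There is therefore nothing in the paper to compare your argument against beyond the proof of Theorem~\ref{theo-3}, whose structure you correctly adopt as a template.

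Your strategy --- lift the slow-phase $2$-torus to $\mathcal T^2_\mathrm{s}\times S^1_\phi$, push through the time-periodic change of variables $\Phi$ from Theorem~\ref{theo-po}(i)--(ii), and read off the orbit structure from the Kronecker--Weyl theorem for the linear flow with frequency vector $(\omega_{1,\mathrm s},\omega_{2,\mathrm s},r_\mathrm d\Omega)$ --- is exactly the natural one-dimension-higher extension of the Poincar\'e-section argument the paper gives for Theorem~\ref{theo-3}, and it is sound. The stability claim follows for the same reason as in Theorem~\ref{theo-3}(ii): the transverse linearization along $\mathcal T^3$ is conjugate, via $\Phi$, to the transverse linearization of $\mathcal T^2_\mathrm{s}$ trivially extended in the $\phi$-direction.

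One point you should flag explicitly rather than smooth over. Your Kronecker--Weyl step yields: \emph{rationally independent} $\Rightarrow$ each orbit is dense in $\mathcal T^3$; \emph{one rational relation} $\Rightarrow$ $\mathcal T^3$ is foliated by invariant $2$-tori. This is the correct classical dichotomy, but it is the \emph{opposite} labeling from the proposition as printed, which assigns the foliation to the rationally independent case and the ``densely filled with quasi-periodic orbits'' to the dependent case. You write ``these are the two alternatives in the proposition'' as though they match; they do not, and since the paper offers no proof to disambiguate, your write-up should state the dichotomy as you have derived it and note the apparent swap in the proposition's wording rather than silently identify the two.
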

\end{sloppypar}

Proposition~\ref{prop4} implies that a two-dimensional invariant torus in the leading-order reduced dynamics~\eqref{eq:ode-reduced-slow-polar-leading} or~\eqref{eq:ode-reduced-slow-cartesian-leading} {generically} corresponds to a three-dimensional quasi-periodic invariant torus of the truncated reduced dynamics~\eqref{eq:red-dyn-leading} on the resonant SSM. With the map $\boldsymbol{W}_\epsilon(\boldsymbol{p},\phi)$ (see equation~\eqref{eq:ssm-decomp-leading}) applied, a corresponding solution $\boldsymbol{z}_\mathrm{tor3}$ of the full system is obtained. In addition, $\boldsymbol{W}_\epsilon(\boldsymbol{p},\phi)$ maps the closed invariant surface of $\boldsymbol{p}_\mathrm{tor3}$ under the period-$T$ map of the flow to a closed invariant surface of the corresponding $\boldsymbol{z}_\mathrm{tor3}$ under its period-$T$ map. Then the $\boldsymbol{z}_\mathrm{tor3}$ in the full system is also a three-dimensional invariant torus. In addition, the stability type of $\boldsymbol{z}_\mathrm{tor3}$ is the same as that of the two-dimensional invariant torus in the leading-order reduced dynamics~\eqref{eq:ode-reduced-slow-polar-leading} or~\eqref{eq:ode-reduced-slow-cartesian-leading}.

Our discussions of two- and three-dimensional invariant tori in this section are based on the truncated reduced dynamics~\eqref{eq:red-dyn-leading}. Numerical experiments show that the this truncation is already sufficient for accurate results. More importantly, the reduced-order models~\eqref{eq:ode-reduced-slow-polar-leading} and~\eqref{eq:ode-reduced-slow-cartesian-leading} are parametric models with $\Omega$ and $\epsilon$ as system parameters, enabling convenient and sufficiently accurate parameter continuation. A full analysis of the persistence of the invariant tori under the consideration of higher-order terms $\mathcal{O}(\epsilon|\boldsymbol{p}|)$ in~\eqref{eq:red-truncation} is beyond the scope of this paper, because these tori are only weakly normally hyperbolic with respect to the perturbations represented by these higher order terms~\cite{haller2012chaos}. Our numerical experiments indicate, however, that this persistence generally holds.

\section{Implementation}
\label{sec:implementation}
\subsection{Software toolboxes}
\begin{sloppypar}
In Part I, we have introduced the \texttt{SSM-ep} toolbox for the computation of the forced response curve (FRC) of periodic orbits of the full system~\eqref{eq:full-first}. Such a FRC is computed as a branch of fixed points of the corresponding leading-order reduced model~\eqref{eq:ode-reduced-slow-polar-leading} or~\eqref{eq:ode-reduced-slow-cartesian-leading}. The toolbox also supports the continuation of bifurcated equilibria in the leading-order model and hence the continuation of bifurcated periodic orbits in the full system~\eqref{eq:full-first}. Specifically, it supports the continuation of SN/HB bifurcation fixed points under variations of $\Omega$ \emph{and} $\epsilon$. The one-dimensional manifold of SN/HB equilibria obtained in this fashion is then mapped back to physical coordinates to yield a one-dimensional manifold of bifurcated periodic orbits of the full system. The continuation of (bifurcated) equilibria in~\eqref{eq:ode-reduced-slow-polar-leading} or~\eqref{eq:ode-reduced-slow-cartesian-leading} is achieved with the \texttt{ep} toolbox of \textsc{coco}~\cite{dankowicz2013recipes,COCO,ahsan2022methods}, a {general-purpose} toolbox for the bifurcation analysis of equilibria of smooth dynamical systems.
\end{sloppypar}

\begin{sloppypar}
We have further developed the \texttt{SSM-po} toolbox for the computation and bifurcation analysis of two-dimensional invariant tori in the full system~\eqref{eq:full-first}. This toolbox performs the continuation of periodic orbits in the leading-order reduced models~\eqref{eq:ode-reduced-slow-polar-leading}-\eqref{eq:ode-reduced-slow-cartesian-leading} and then maps the periodic orbits to tori in the full system. Specifically, it supports
\begin{itemize}
\item The switch from the continuation of equilibria in the reduced-order model~\eqref{eq:ode-reduced-slow-polar-leading} or~\eqref{eq:ode-reduced-slow-cartesian-leading} to the continuation of periodic orbits at HB equilibria.
\item The continuation of periodic orbits in the reduced-order model~\eqref{eq:ode-reduced-slow-polar-leading} or~\eqref{eq:ode-reduced-slow-cartesian-leading} under variations of $\Omega$ or $\epsilon$. The resulting one-parameter family of periodic orbits is then mapped back to physical coordinates to yield a one-parameter family of two-dimensional invariant tori of the full system~\eqref{eq:full-first}. 
\item The continuation of saddle-node (SN)/period-doubling (PD)/torus (TR) bifurcation of periodic orbits in the leading-order reduced dynamics~\eqref{eq:ode-reduced-slow-polar-leading} or~\eqref{eq:ode-reduced-slow-cartesian-leading} under variations of $\Omega$ \emph{and} $\epsilon$. The one-parameter family of SN/PD/TR bifurcation periodic orbits is then mapped back to physical coordinates to yield a one-parameter family of quasi-periodic SN/PD/HB bifurcation solutions.
\end{itemize}
The parameter continuation in the \texttt{SSM-po} toolbox is achieved with the help of the \texttt{po} toolbox of \textsc{coco}~\cite{dankowicz2013recipes,COCO,ahsan2022methods}, which is a {general-purpose} toolbox for the bifurcation analysis of periodic orbits of dynamical systems. The details of the mapping from periodic orbits in the leading-order SSM-reduced dynamics to two-dimensional invariant tori in the full system have been presented in Appendix~\ref{sec:app-po2tor}.
\end{sloppypar}

\begin{sloppypar}
We have also developed the \texttt{SSM-tor} toolbox for the computation and bifurcation analysis of three-dimensional invariant tori in the full system~\eqref{eq:full-first}. This toolbox performs the continuation of two-dimensional invariant tori in the leading-order models~\eqref{eq:ode-reduced-slow-polar-leading}-\eqref{eq:ode-reduced-slow-cartesian-leading} and maps the two-dimensional invariant tori to the three-dimensional invariant tori of the full system~\eqref{eq:full-first}. Specifically, it supports
\begin{itemize}
\item The switch from the continuation of periodic orbits in the leading-order model~\eqref{eq:ode-reduced-slow-polar-leading} or~\eqref{eq:ode-reduced-slow-cartesian-leading} to the continuation of two-dimensional invariant tori at TR bifurcations of periodic orbits.
\item The continuation of two-dimensional invariant tori in the leading-order SSM-reduced models~\eqref{eq:ode-reduced-slow-polar-leading}-\eqref{eq:ode-reduced-slow-cartesian-leading} under variations of $\Omega$ or $\epsilon$. The one-parameter family of two-dimensional invariant tori is then mapped back to physical coordinates to yield a one-parameter family of three-dimensional invariant tori in the full system~\eqref{eq:full-first}.
\end{itemize}
The parameter continuation in the \texttt{SSM-tor} toolbox is achieved with the help of the \texttt{Tor}-toolbox~\cite{li2020tor}, which is a {general-purpose} toolbox for the continuation of two-dimensional invariant tori in autonomous systems and non-autonomous systems with periodic forcing. A brief introduction to the \texttt{Tor} toolbox will be given in Appendix~\ref{sec:app-Tor}. The details of the mapping from two-dimensional invariant tori in the leading-order models~\eqref{eq:ode-reduced-slow-polar-leading}-\eqref{eq:ode-reduced-slow-cartesian-leading} to three-dimensional invariant tori in the full system have been presented in Appendix~\ref{sec:app-tor22tor3}.
\end{sloppypar}

\subsection{Computational cost}
\label{sec:compLoad}
\begin{sloppypar}
In Part I, we derived a more elaborate version of~\eqref{eq:ssm-decomp-leading} of the form
\begin{equation}
\label{eq:zt-map-full}
\boldsymbol{z}(t)=\boldsymbol{W}(\boldsymbol{p}(t))+\epsilon\left(\boldsymbol{x}_{{0}}e^{\mathrm{i}\Omega t}+\bar{\boldsymbol{x}}_{0}e^{-\mathrm{i}\Omega t}\right),
\end{equation}
where $\boldsymbol{x}_{{0}}$ is the solution of a system of $\Omega$ dependent linear equations. With that formulation, SSM analysis can be decomposed into four parts as follows:
\begin{itemize}
\item \emph{Autonomous SSM}: This includes the expansion coefficients in  $\boldsymbol{W}$ as well as the coefficients of the nonlinear terms in the vector field of the reduced dynamics (see~\eqref{eq:auto-ssm-red}). These coefficients are $\Omega$-independent in the neighborhood of a resonance relation and hence require a one-time computation. The computational cost of this part can be significant if the full system is high-dimensional and the selected expansion order of the SSM is high.
\item \emph{Reduced dynamics}:
This includes finding periodic orbits, 2-tori and 3-tori of the reduced dynamics~\eqref{eq:red-dyn-leading}. Thanks to the normal form analysis embedded in SSM reduction, these tasks are simplified as finding fixed points, periodic orbits and 2-tori of the leading-order reduced dynamics~\eqref{eq:ode-reduced-slow-polar-leading}-\eqref{eq:ode-reduced-slow-cartesian-leading}. These solutions are parameter dependent, and hence parameter continuation is required to cover the solution family effectively. Note that the phase space of the leading-order models is of low-dimension, irrespective of the dimension of the full system~\eqref{eq:full-first}. Therefore, the computational cost of this part is relatively small.
\item \emph{Nonautonomous SSM}: This includes the expansion coefficient vector $\boldsymbol{x}_{{0}}$. Note that the system of linear equations for solving $\boldsymbol{x}_{{0}}$ is of the same size as the dimension of $\boldsymbol{z}$. The computational cost of this part can therefore be significant as well if the full system is of high dimension and the number of samples for $\Omega$ is large. However, the computation here is parallelizable because the linear equations for each sampled $\Omega$ can be solved independently.
\item \emph{Evaluation of map $\boldsymbol{W}(\boldsymbol{p})$}: This includes the evaluations of the map $\boldsymbol{W}(\boldsymbol{p})$ in \eqref{eq:zt-map-full} for $\boldsymbol{p}(t)$ which are periodic orbits, 2-tori and 3-tori of the reduced dynamics~\eqref{eq:red-dyn-leading}. Recall that each 2-torus or 3-torus is approximated with a collection of trajectories $\{\boldsymbol{p}_j(t)\}_{j=1}^{n_\mathrm{traj}}$ (see Appendixes~\ref{sec:app-po2tor} and~\ref{sec:app-tor22tor3}), the computational time of this part can be considerable if $n_\mathrm{traj}\gg1$ and the number of tori is large. However, the computation here is also parallelizable because the evaluation can be performed independently for each torus and each trajectory of a given torus.
\end{itemize}
\end{sloppypar}
\begin{sloppypar}
In practice, we first compute lower- dimensional invariant sets and then move to higher-dimensional invariant sets. In particular, one may first calculate the FRC of periodic orbits. Once TR bifurcations of periodic orbits are detected, we may switch to the computation of FRC of two-dimensional invariant tori. Likewise, one may further switch to the computation of three-dimensional invariant tori at quasi-periodic Hopf bifurcations of two-dimensional invariant tori. Note that the autonomous part of the SSM in all these computations is the same as long as the resonance relation~\eqref{eq:full-first} does not change. Therefore, the autonomous part of the SSM obtained in the calculation of FRC of periodic orbits can be directly utilized in later computations as well.
\end{sloppypar}

\section{Examples}
\label{sec:example}
\subsection{Two coupled nonlinear oscillators in resonance}
\label{sec:example1}
Consider two coupled nonlinear oscillators with governing equations
\begin{gather}
\ddot{x}_1+c_1\dot{x}_1+x_1+b_1x_1x_2=\epsilon f_1\cos\Omega t,\nonumber\\
\ddot{x}_2+c_2\dot{x}_2+4x_2+b_2x_1^2=\epsilon f_2\cos\Omega t.\label{eq:eom-two-os}
\end{gather}
The eigenvalues of the linearized system are
\begin{gather}
\lambda_{1,2}=-\frac{c_1}{2}\pm\mathrm{i}\sqrt{1-0.25c_1^2}\approx\pm\mathrm{i},\nonumber\\
\lambda_{3,4}=-\frac{c_2}{2}\pm\mathrm{i}\sqrt{4-0.25c_2^2}\approx\pm2\mathrm{i},
\end{gather}
provided that $c_1\ll1$ and $c_2\ll1$. In that case, the system has a 1:2 internal resonance. We focus on the primary resonance of the first mode for which we have $\boldsymbol{r}=(1,2)$ in~\eqref{eq:res-forcing}. This first example has a four-dimensional, time-periodic, resonant SSM. In this case, SSM analysis does not involve any reduction, just passage to the leading-order reduced form~\eqref{eq:ode-reduced-slow-polar-leading}-\eqref{eq:ode-reduced-slow-cartesian-leading}.

\begin{sloppypar}
With $c_1=0.005$ N.s/m, $c_2=0.01$ N.s/m, $b_1=0.3$ N/$\mathrm{m}^3$, $b_2=1$ N/$\mathrm{m}^2$, $f_1=1$ N, $f_2=0$ N and $\epsilon=0.01$, we obtain the FRCs in modal coordinates $(\rho_1,\rho_2)$ and in physical coordinates $(||x_1||_{\infty},||x_2||_{\infty})$ for periodic orbits with $\Omega\in[0.7,1.1]$ in Fig.~\ref{fig:oneTwo}. Here and throughout the paper $||\bullet||_{\infty}:=\max_{t\in[0,T]}||\bullet(t)||$ gives the amplitude of the periodic or quasi-periodic response. Although $f_2=0$, the second mode is activated and $\mathcal{O}(\rho_2)\sim\mathcal{O}(\rho_1)$ due to the internal resonance. We also performed the continuation of periodic orbits of the original system to obtain reference solutions. Such a continuation run is conducted with the \texttt{po} toolbox of \textsc{coco}~\cite{dankowicz2013recipes,COCO}, where a collocation method is used to obtain periodic orbits. As can be seen in the last two panels in Fig.~\ref{fig:oneTwo}, the FRCs obtained from SSM analysis match well with the reference solutions by the collocation method provided that the amplitude of response is small. In addition, numerical experiments show that the discrepancies observed at large response amplitudes in the figure are eliminated when the expansion order of SSM is increased to seven or higher. We use the third-order expansion in this example because we are mainly concerned with quasi-periodic responses, whose amplitudes are small, as inferred from the positions of the Hopf bifurcation fixed points (HB1 and HB2). In this setting, the third-order expansion yields accurate results for invariant tori.
\end{sloppypar}

\begin{figure}[!ht]
\centering
\includegraphics[width=0.45\textwidth]{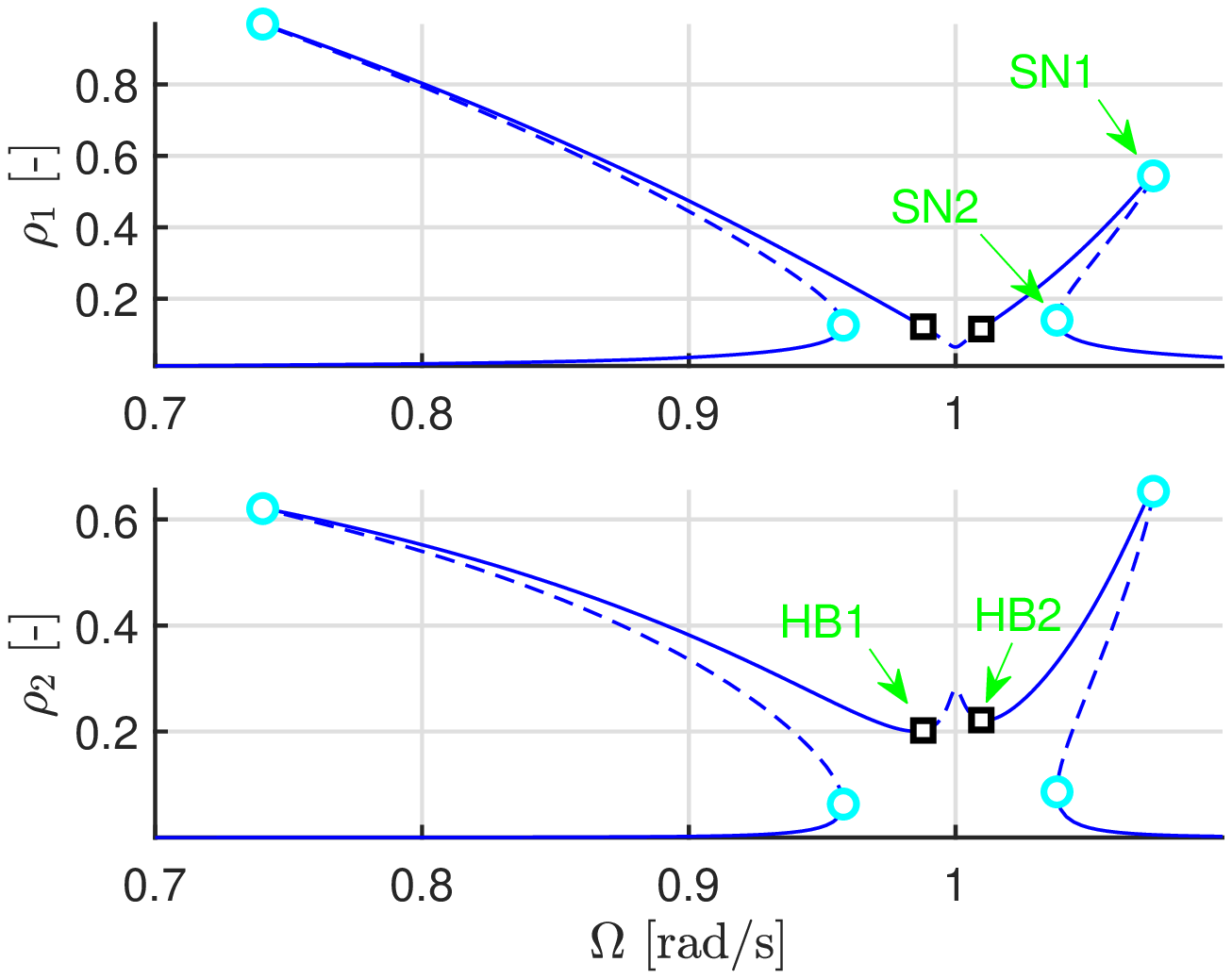}
\includegraphics[width=0.45\textwidth]{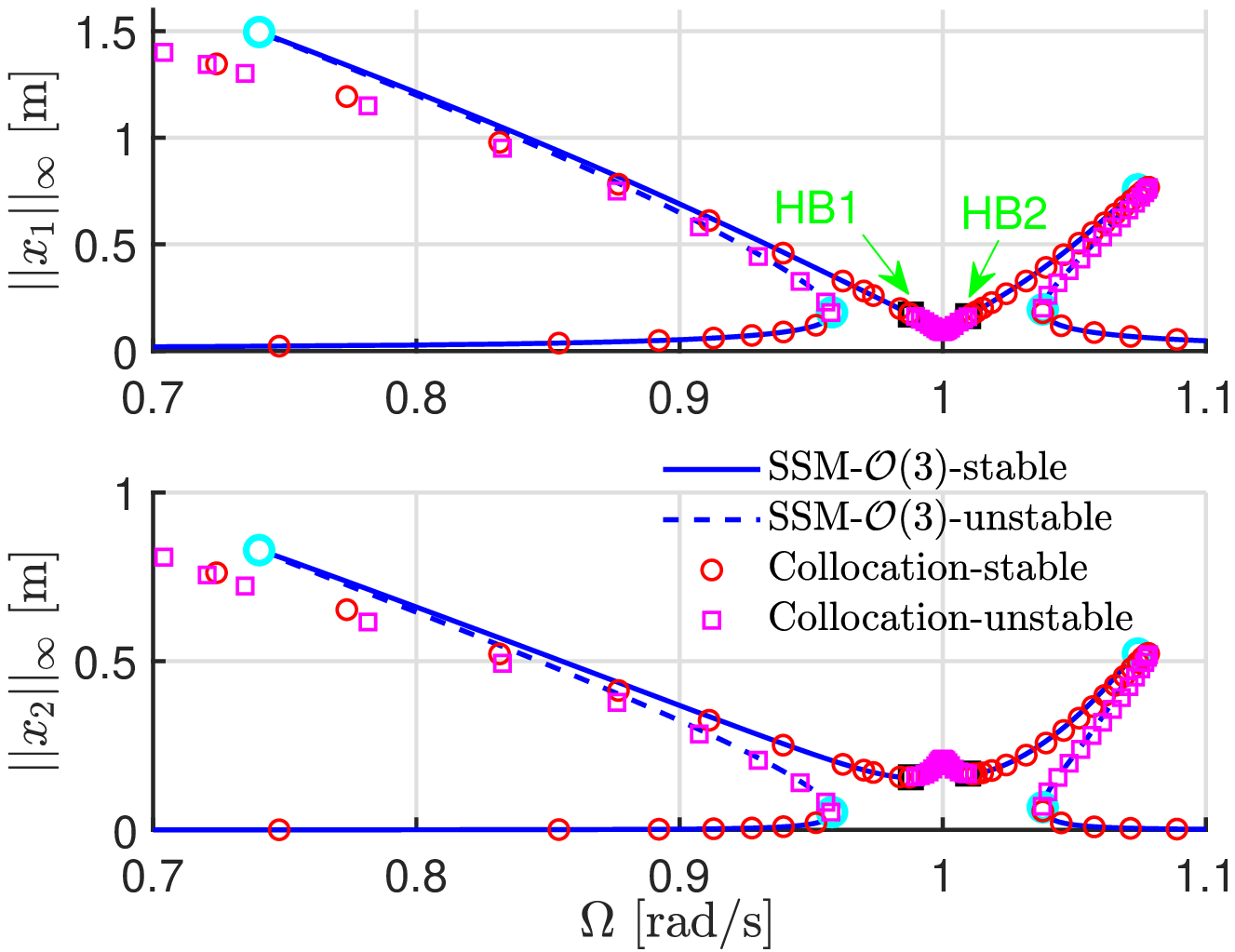}
\caption{FRCs in normal coordinates ($\rho_1,\rho_2$) and physical coordinates ($x_1,x_2$) of the coupled resonant oscillators in \eqref{eq:eom-two-os}. Here and throughout the paper, the solid lines indicate stable solution branches while dashed lines mark unstable solution branches; the cyan circles denote Saddle-Node (SN) bifurcation points and black squares denote Hopf bifurcation (HB) points. In the panels for FRCs in physical coordinates, the results obtained by continuation of periodic orbits with collocation methods using the \texttt{po} toolbox of \textsc{coco} are presented as well to validate the SSM-reduction results.}
\label{fig:oneTwo}
\end{figure}

\begin{sloppypar}
Both saddle-node (SN) and Hopf bifurcation (HB) fixed points are detected in the leading-order reduced model given by~\eqref{eq:ode-reduced-slow-cartesian-leading}. Specifically, four SN points and two HB points are observed, as seen in Fig.~\ref{fig:oneTwo}. These are codimension-one bifurcations and hence a one-parameter family, e.g., a bifurcation curve in the $(\Omega,\epsilon)$ plane, can be obtained for each type of bifurcation. We performed one-dimensional continuation of SN fixed points with $(\Omega,\epsilon)\in[0.7,1.1]\times[0.0001,0.05]$. The SN1 in Fig.~\ref{fig:oneTwo} is used as the starting point of such a continuation. The computed bifurcation curve of SN is plotted in the first panel of Fig.~\ref{fig:oneTwo-SNHB}. We also performed the continuation of SN bifurcation {periodic orbits} of the original system using the \texttt{po} toolbox of \textsc{coco}. The results of this continuation run are presented in the first panel of Fig.~\ref{fig:oneTwo-SNHB} as well for comparison. With $\epsilon=0.01$, two SN points are found and the one in the upper branch with smaller $\Omega$ corresponds to the SN2 in Fig.~\ref{fig:oneTwo}, while the one in the lower branch with larger $\Omega$ corresponds to the SN1 in Fig.~\ref{fig:oneTwo}. We observe that the solutions in the upper branch match well for all $\epsilon\in[0.0001,0.05]$, thanks to small response amplitudes on this branch (cf. Fig.~\ref{fig:oneTwo}). In contrast, the difference between the results of the two methods becomes significant for increasing $\epsilon$ along the lower branch, where the amplitude of the response is large (cf. Fig.~\ref{fig:oneTwo}). These discrepancies are eliminated again when the expansion order of SSM is increased to five or higher. The two branches merge and then the SN bifurcation does not exist for $\epsilon\to0$, yielding a cusp bifurcation.
\end{sloppypar}

\begin{figure}[!ht]
\centering
\includegraphics[width=0.45\textwidth]{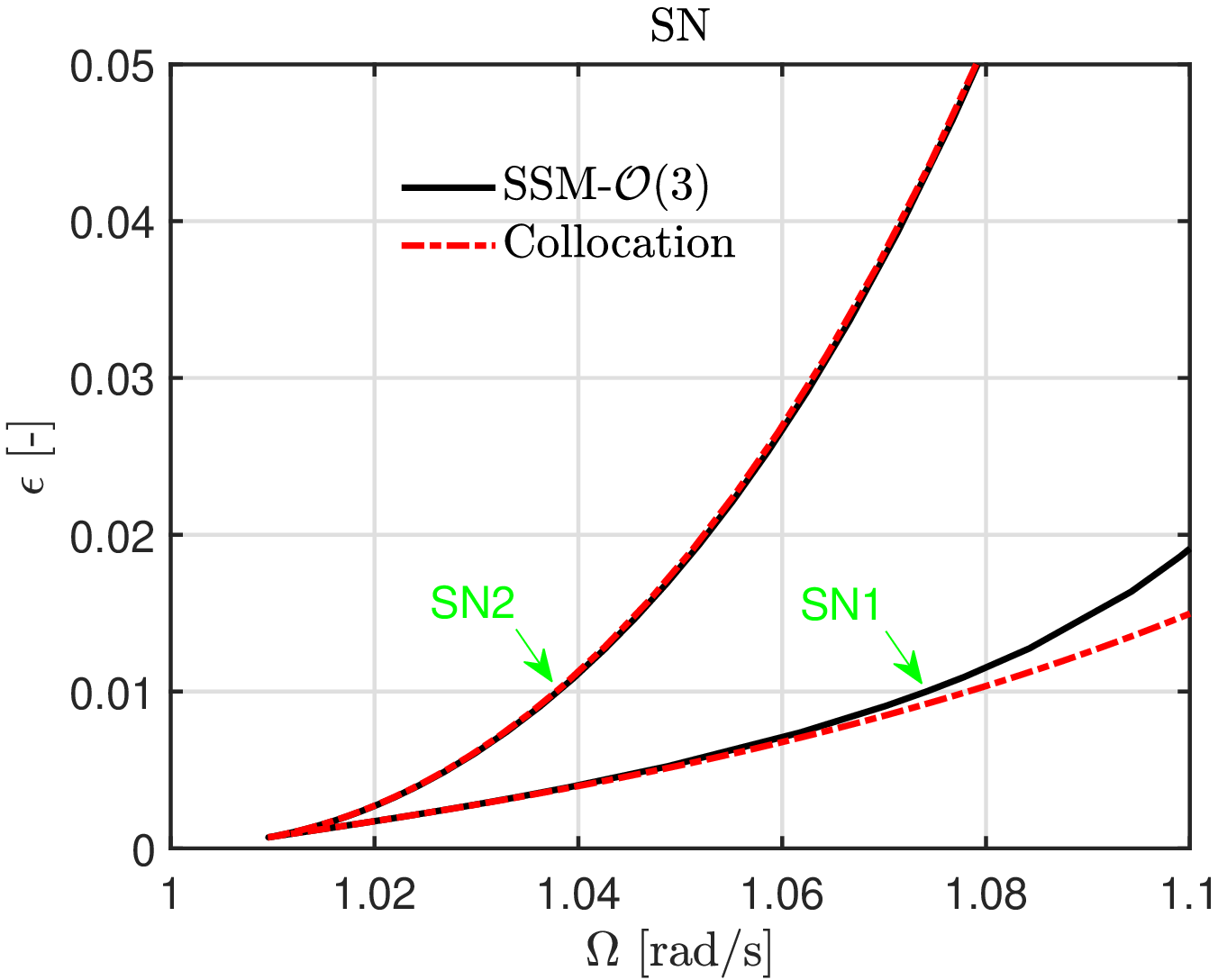}
\includegraphics[width=0.45\textwidth]{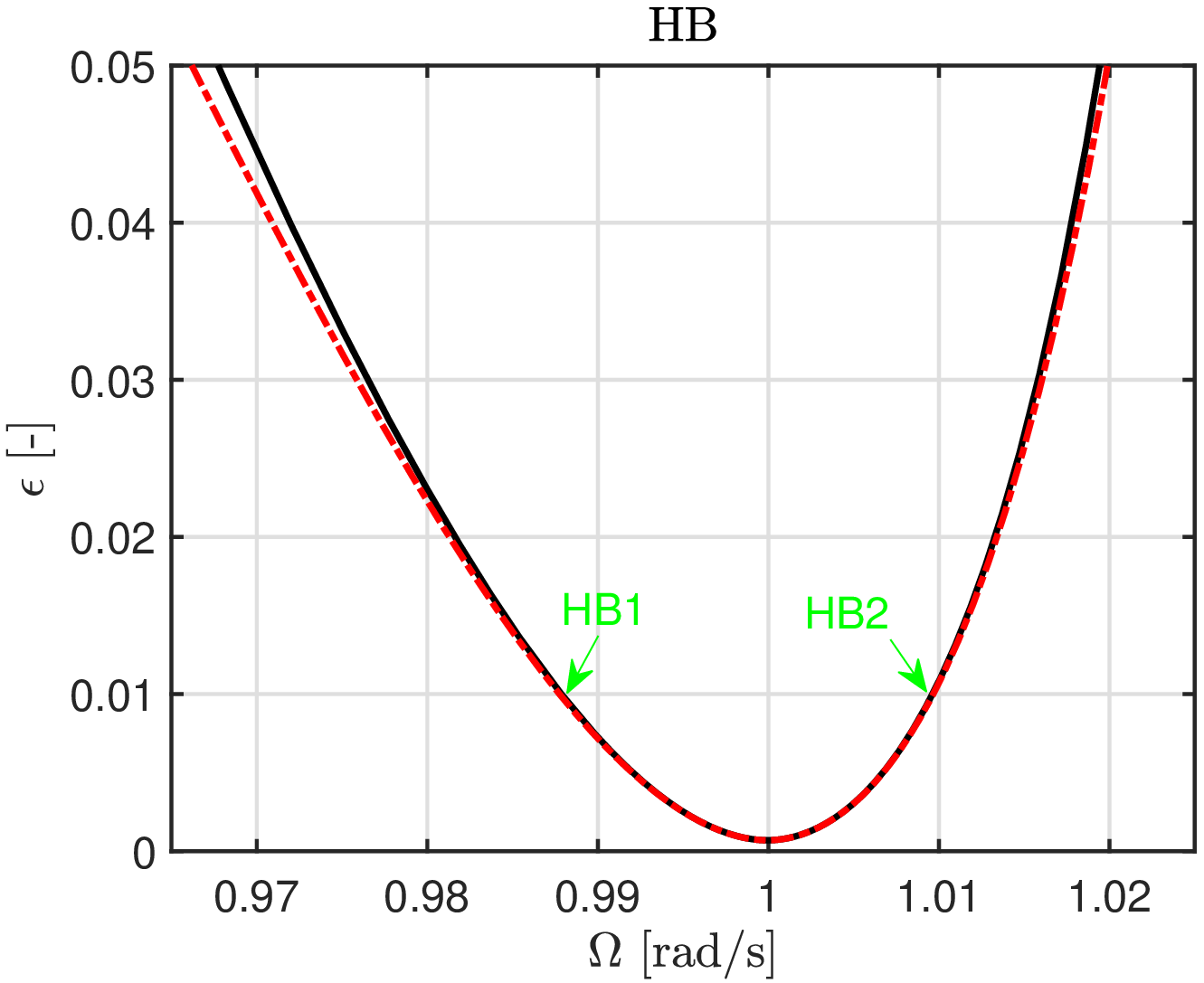}
\caption{Bifurcation curves of periodic orbits of the two coupled oscillators. In the first panel, the curve of saddle-node (SN) bifurcation periodic orbits is shown. Such SN periodic orbits are detected as SN fixed points in the slow-phase reduced dynamics. In the second panel, the curve of torus (TR) bifurcation periodic orbits is shown. Such TR periodic orbits are detected as Hopf bifurcation (HB) fixed points in the leading-order SSM-reduced dynamics. The curves of SN and TR periodic orbits are also obtained via the collocation method using the \texttt{po}-toolbox of \textsc{coco} as reference solutions.}
\label{fig:oneTwo-SNHB}
\end{figure}

Recall that the detected HB bifurcation fixed points of the leading-order SSM-reduced model~\eqref{eq:ode-reduced-slow-polar-leading}-\eqref{eq:ode-reduced-slow-cartesian-leading} corresponds to torus (TR) bifurcations of {periodic orbits} in the full system~\eqref{eq:full-first}. We performed one-dimensional continuation of the HB fixed point with $(\Omega,\epsilon)$ in the same computational domain as that of SN points. The HB1 in Fig.~\ref{fig:oneTwo} is used as the starting point of such a continuation. The computed bifurcation curve of HB points is plotted in the second panel of Fig.~\ref{fig:oneTwo-SNHB}. We also performed the continuation of TR bifurcations of periodic orbits of the original system using the \texttt{po} toolbox of \textsc{coco}. The results obtained by the two methods match well, as seen in the second panel of Fig~\ref{fig:oneTwo-SNHB}. Again, no HB (TR) bifurcation is found if the forcing amplitude $\epsilon$ is small enough. Indeed, the oscillators behave like linear oscillators for small $\epsilon$ values.

A unique limit cycle bifurcates from a HB fixed point and a family of such limit cycles can be formed under variation of $\Omega$ or $\epsilon$ from the critical parameter value for the HB point. We performed one-dimensional continuation of such limit cycles under varying $\Omega$. Several saddle-node and periodic-doubling bifurcation limit cycles along the continuation path are found. Detailed discussion of results from the continuation are given in Appendix~\ref{sec:ap-exap-limitcycle}.

\begin{sloppypar}
With these limit cycles obtained in the leading-order model~\eqref{eq:ode-reduced-slow-cartesian-leading}, we construct the corresponding invariant tori in the parameterization coordinates and then map the tori to invariant tori in the physical coordinates. For a given torus, one can calculate the amplitude of quasi-periodic response on such a tours. When $\Omega$ is varied, the amplitude is changed and we have the forced response curve for quasi-periodic responses as well. The FRCs for both periodic and quasi-periodic responses of the vibration of the first oscillator are presented in Fig.~\ref{fig:FRC_oneTwo_tori}. 
\end{sloppypar}

\begin{figure*}[!ht]
\centering
\includegraphics[width=0.8\textwidth]{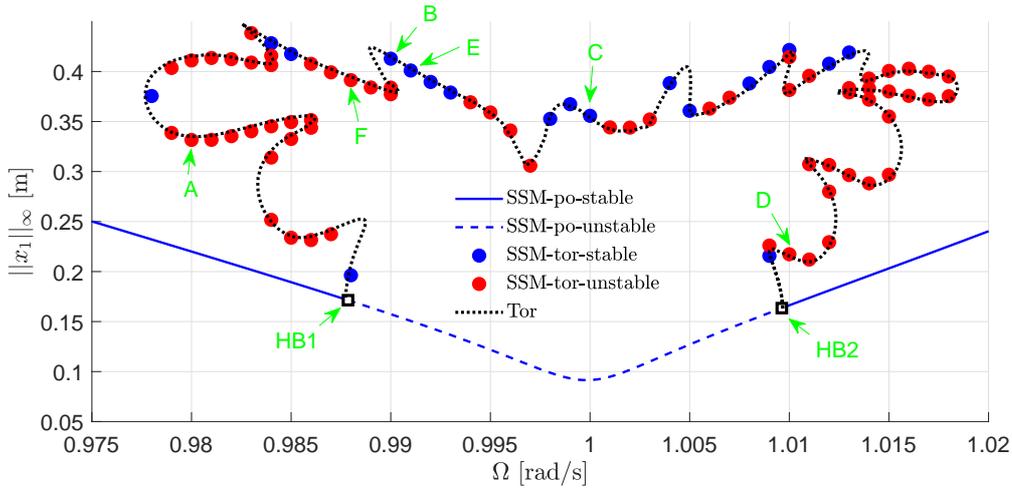}
\caption{FRCs for the periodic and quasi-periodic orbits of the first oscillator in \eqref{eq:eom-two-os} with $\Omega\approx1$. Here the solid/dashed lines denote the amplitudes of stable/unstable periodic orbits obtained by SSM analysis. Blue/red dots represent the amplitudes of stable/unstable quasi-periodic responses obtained by SSM analysis at uniformly sampled $\Omega$. The two black squares correspond to the two Hopf bifurcation (HB) fixed points in the leading-order dynamics~\eqref{eq:ode-reduced-slow-cartesian-leading} (or, equivalently, torus bifurcation periodic orbits in the original system~\eqref{eq:eom-two-os}). The dotted lines are the results obtained by applying \texttt{Tor}-toolbox to the original system.}
\label{fig:FRC_oneTwo_tori}
\end{figure*}

To validate the results for quasi-periodic orbits, we apply the \texttt{Tor} toolbox~\cite{li2020tor} to the original system directly to find two-dimensional invariant tori of the coupled oscillators. More details about the toolbox can be found at Appendix~\ref{sec:app-Tor}. With 50 Fourier modes and adaptive change of the collocation mesh, we obtain one-parameter family of invariant tori under varying $\Omega$. On such a solution manifold, $\Omega$ is free to change while $\epsilon$ is fixed. The results obtained from SSM-based analysis match well with the ones by \texttt{Tor}, as can be seen in Fig.~\ref{fig:FRC_oneTwo_tori}. 

We further consider two additional ways of validation based on the results from the \texttt{Tor} toolbox. The first method compares the internal frequency of a two-dimensional torus predicted by SSM-based analysis and the \texttt{Tor} toolbox. In the second method, we focus on the invariant intersection of such a torus with appropriate Poincar\'e section. We take sampled tori A-D in Fig.~\ref{fig:FRC_oneTwo_tori} in the second validation. As seen in Appendix~\ref{sec:ap-exap-twoval}, these two additional methods validate the effectiveness of SSM-based analysis as well.

The SSM analysis has several advantages over the direct calculation using the \texttt{Tor} toolbox. The computations above are performed in a Intel Xeon W processor (2.3 GHz). The computational time for the continuation of tori with the \texttt{Tor} toolbox is about one and half hours, while the one with SSM analysis is just about four minutes. Such a significant speed-up gain is not surprising because the SSM method performs the continuation of periodic orbits in the leading-order dynamics~\eqref{eq:ode-reduced-slow-cartesian-leading} (each periodic orbit is a single trajectory), while the \texttt{Tor} toolbox performs the continuation of two-dimensional tori (each torus is approximated with 101 trajectories here).

The current release of \texttt{Tor}-toolbox does not provide stability analysis to the computed invariant tori, while the stability type of the invariant tori obtained by SSM analysis is the same as that of the corresponding limit cycles in the leading-order dynamics~\eqref{eq:ode-reduced-slow-cartesian-leading}, which is simple to determine. We take E and F in Fig.~\ref{fig:FRC_oneTwo_tori} as representative samples of stable and unstable invariant tori and then successfully validate their stability types using numerical integration. More details about this validation can be found in Appendix~\ref{sec:ap-exap-stab}.

\subsection{A Bernoulli beam with nonlinear support spring}
\begin{sloppypar}
As our second example, we consider a cantilever beam with a nonlinear spring support at its free end. The beam is modeled using Bernoulli beam theory and hence the only nonlinearity in this example comes from the support spring, whose force-displacement relation is given by
\begin{equation}
F=k_\mathrm{l}w+k_\mathrm{nl}w^3.
\end{equation}
Here $w$ is the transverse displacement, $F$ is the spring force, and $k_\mathrm{l}$ and $k_\mathrm{nl}$ denote linear and nonlinear stiffness, respectively.
\end{sloppypar}

Let $b$ and $h$ be the width and height of the cross section, and $l$ be the length of the beam. We set $h=b=10\,\mathrm{mm}$ and $l=2700\,\mathrm{mm}$. Material properties are specified with density $\rho= 1780\times10^{-9}\,\mathrm{kg/{mm}^3}$ and Young's modulus $E=45\,\times10^6\,\mathrm{kPa}$. Following classic finite element discretization, two degrees of freedom are introduced at each node: the transverse displacement and the rotation angle. The displacement field is approximated with Hermite interpolation applied to each element. The equation of motion of the discretized beam model can be written as
\begin{equation}
\label{eq:eom-beams}
\boldsymbol{M}\ddot{\boldsymbol{x}}+\boldsymbol{C}\dot{\boldsymbol{x}}+\boldsymbol{K}\boldsymbol{x}+\boldsymbol{N}(\boldsymbol{x})=\epsilon\boldsymbol{f}\cos\Omega t,
\end{equation}
where $\boldsymbol{x}\in\mathbb{R}^{2N_{\mathrm{e}}}$ is the assembly of all degrees of freedom, and $N_{\mathrm{e}}$ is the number of elements used in the discretization. We assume Rayleigh damping for the beam elements (without the support spring) by letting
\begin{equation}
\label{eq:RayleighDamping}
\boldsymbol{C}=\alpha\boldsymbol{M}+\beta\boldsymbol{K}_\mathrm{b},
\end{equation}
where $\boldsymbol{K}_\mathrm{b}$ is the stiffness matrix of beam elements, to be obtained from $\boldsymbol{K}$ by removing the contribution of the linear stiffness of the support spring, i.e. $k_\mathrm{l}$.
If $\omega_i$ denotes the $i$-th natural frequency of the undamped linear system, we have
\begin{align}
\label{eq:weakDampFreq}
\lambda_{2i-1,2i} & \approx-\frac{\alpha+\beta\omega_i^2}{2}\pm\mathrm{i}\omega_i\sqrt{1-
\left(\frac{\alpha}{2\omega_i}+\beta\omega_i\right)^2}\nonumber\\
& \approx\pm\mathrm{i}\omega_i,
\end{align}
if $0\leq\alpha\ll\omega_i$ and $0\leq\beta\ll1$. In this example, we set $\alpha=1.25\times10^{-4}\,\mathrm{s}$ and $\beta=2.5\times10^{-5}\,\mathrm{s}^{-1}$ such that the system is weakly damped and the above approximation holds.

We set $k_\mathrm{l}=27$ N/m such that $\omega_2\approx3\omega_1$ and hence the system has near 1:3 internal resonance between the first two modes. We further set $k_\mathrm{nl}=60$ N/$\mathrm{m}^3$ such that the two bending modes are nonlinearly coupled. Let $\boldsymbol{\phi}_i$ be the linear normal mode corresponding to $\omega_i$, normalized with respect to $\boldsymbol{M}$. We set $\boldsymbol{f}=\omega_1^2\boldsymbol{M}\boldsymbol{\phi}_1$ such that only the first mode is externally excited, namely, $\boldsymbol{\phi}_i^{\mathrm{T}}\boldsymbol{f}=0$ for $i\geq2$. In the following computations, the beam is uniformly discretized with 40 elements and hence the system has 80 degrees-of-freedom. In this case, we have $\omega_1=15.60\,\mathrm{rad/s}$ and $\omega_2=46.58\,\mathrm{rad/s}$. Numerical experiments show that bifurcations observed in the reduced-order model of this discrete model are persistent when the number of elements is increased. We focus on the case of 40 elements here for simplicity.

With $\epsilon=0.002$, we compute a 4-dimensional SSM to account for the 1:3 internal resonance. Reduction to this SSM reduces the dimension of the phase space from 160 to 4. The FRCs obtained from this reduction in normal coordinates $(\rho_1,\rho_2)$ for $\Omega\in[15.30,15.95]$ are shown in Fig.~\ref{fig:Bernoulli_modal}. Mode interaction is observed around $\Omega=\omega_1$. Although the second bending mode is not excited externally, the response amplitude of the second mode is of the same order as the response amplitude of the first mode, namely, $\mathcal{O}(\rho_2)\sim\mathcal{O}(\rho_1)$. The nontrivial $\rho_2$ is induced by the internal resonance and the cubic nonlinearity of the support spring. In addition, $\rho_1$ drops significantly while $\rho_2$ arrives its peak around $\Omega=\omega_1$. This implies energy transfer between the two modes due to the internal resonance.

\begin{figure}[!ht]
\centering
\includegraphics[width=0.45\textwidth]{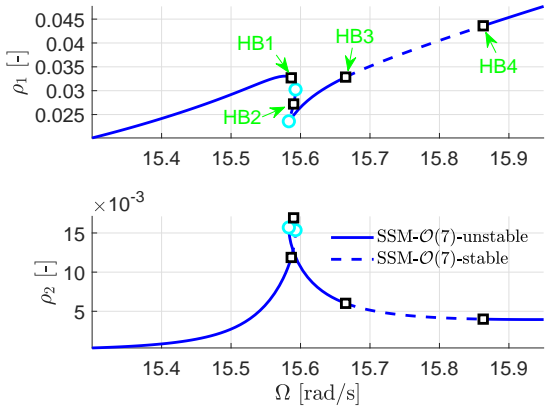}
\caption{FRCs in normal coordinates of the discretized cantilever Bernoulli beam with a nonlinear support spring at its free end.}
\label{fig:Bernoulli_modal}
\end{figure}

The FRC in physical coordinates for the vibration amplitude at the support end of the beam is presented in Fig.~\ref{fig:Bernoulli_physical}. To validate the effectiveness of SSM reduction, we also calculate the periodic orbits of the full system directly using the collocation method with the \texttt{po} toolbox of \textsc{coco}. Specifically, we use a fixed mesh with 10 subintervals and five base points in each subinterval in the collocation scheme. The maximum continuation step size and the maximum residual for the predictor in the one-dimensional atlas algorithm of \textsc{coco} are increased from the default values to 100 and 10, respectively, to adapt to this high-dimensional continuation problem. As seen in Fig.~\ref{fig:Bernoulli_physical}, the results from SSM-reduction match well with the results from the collocation method. Remarkably, the computational time for the collocation method is about nine and half hours while the one for SSM-reduction is about seven seconds.

\begin{figure}[!ht]
\centering
\includegraphics[width=0.45\textwidth]{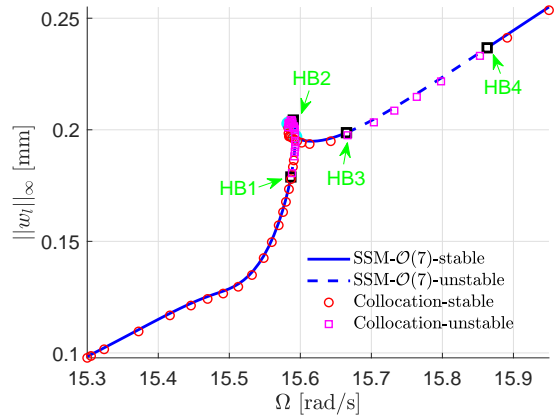}
\caption{FRC for the amplitudes of periodic orbits at the end of the cantilever Bernoulli beam with a nonlinear support spring at its end.}
\label{fig:Bernoulli_physical}
\end{figure}

Four Hopf and two saddle-node bifurcation fixed points are detected in the continuation of equilibria of the leading-order SSM-reduced model, as seen in Fig.~\ref{fig:Bernoulli_modal}. Each HB fixed point corresponds to a TR bifurcation of periodic orbit of the full system~\eqref{eq:eom-beams}. The detection of these periodic orbits via collocation-based continuation in the full system is challenging because the event functions for bifurcation detection are close to zero in the whole continuation run. To tackle this issue, we have used a subset of eigenvalues of the monodromy matrix of a periodic orbit instead of all eigenvalues, as discussed in Appendix~\ref{sec:event-bif-po}. When we use three eigenvalues of the monodromy matrix for the bifurcation detection, both the four HB points and the two SN points are found in the collocation-based continuation. However, none of them arises if we use four eigenvalues, and the SN points are not detected if we use two eigenvalues in the event functions. Such a high sensitivity to the number of eigenvalues used indicates the difficulty of detecting bifurcations of periodic orbits in high-dimensional systems. In contrast, these bifurcations can be easily found with the continuation of fixed points in the corresponding leading-order SSM-reduced models~\eqref{eq:ode-reduced-slow-polar-leading}-\eqref{eq:ode-reduced-slow-cartesian-leading}.

We switch from the continuation of equilibria of the leading-order SSM-reduced model to the continuation of periodic orbits at HB2 (see~Fig.~\ref{fig:Bernoulli_modal}), where a unique limit cycle bifurcates from the fixed point. Such a continuation run proceeds until the solution branch approaches to the HB1 point. Consistent results are obtained if we perform the switch at that point. In this continuation run, both stable and unstable limit cycles are observed, as seen in Fig.~\ref{fig:Bernoulli_TandRho}, where the plots of the period and the size of the computed limit cycles in the leading-order SSM-reduced model as functions of $\Omega$ are presented. In addition, TR and SN bifurcation limit cycles are detected in the continuation run.

\begin{figure}[!ht]
\centering
\includegraphics[width=0.48\textwidth]{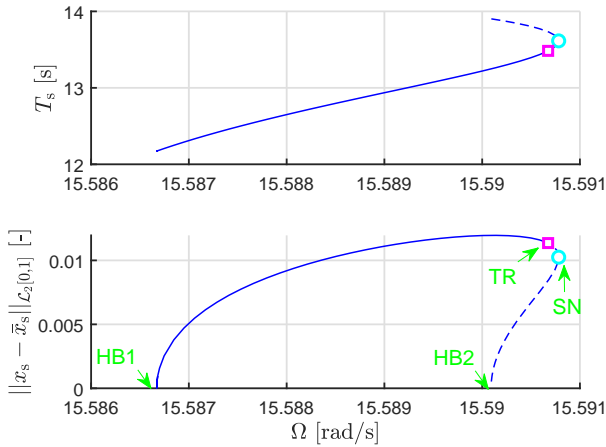}
\caption{Projections of the continuation path of the limit cycles in the leading-order SSM-reduced model~\eqref{eq:ode-reduced-slow-cartesian-leading} of the discrete Bernoulli beam model. The upper and lower panels present the period and the size of the limit cycles as functions of $\Omega$, respectively. The circles and diamonds correspond to saddle-node (SN) and Neimark-Sacker (TR) bifurcation periodic orbits respectively. The limit cycles here are mapped to the two-dimensional invariant tori in the full system with the two frequencies, $\Omega$ and $\omega_\mathrm{s}=2\pi/T_\mathrm{s}$. Formal definition of the size of limit cycles is given by~\eqref{eq:size-po}.}
\label{fig:Bernoulli_TandRho}
\end{figure}

We construct the corresponding two-dimensional invariant tori in the normal form coordinates $\boldsymbol{p}$ and then map them back to the two-dimensional invariant tori in the physical coordinates. The FRC for quasi-periodic orbits that stay on these two-dimensional invariant tori is presented in the first panel of Fig.~\ref{fig:Bernoulli_tori_34}. The FRC of quasi-periodic orbits intersects with the FRC of periodic orbits at HB1 and HB2. Here the family of two-dimensional invariant tori born out of HB1 is stable while the family of two-dimensional invariant tori born out of HB2 is unstable, indicating that the bifurcations at HB1 and HB2 are supercritical and subcritical, respectively (see~Fig.~\ref{fig:Bernoulli_TandRho}). In addition, we see from the upper panel of Fig.~\ref{fig:Bernoulli_tori_34} the coexistence of a stable torus, an unstable torus, a stable periodic orbit and an unstable periodic orbit for $\Omega\in[\Omega_\mathrm{HB2},\Omega_\mathrm{TR}]$, where $\Omega_\mathrm{HB2}=15.5901$ corresponds to the HB2 bifurcation point and $\Omega_\mathrm{TR}=15.5907$ corresponds to the TR bifurcation solution (see~Fig.~\ref{fig:Bernoulli_TandRho}). Hence the perturbation to an unstable invariant torus could result in a periodic orbit in steady state.

We repeat the same procedure for the remaining two HB points, HB3 and HB4, which are the boundary points for a segment of unstable periodic orbits (see~Fig.~\ref{fig:Bernoulli_physical}). At HB3, we switch from the continuation of equilibria of the reduced-order model to the continuation of limit cycles that bifurcate from HB3. Such a continuation run proceeds until $\Omega$ approaches the critical value at HB4. In this continuation run, all limit cycles obtained are stable. Therefore, a one-parameter family of stable limit cycles is obtained under varying $\Omega$, on which each limit cycle generically corresponds to a two-dimensional invariant torus in the full system~\eqref{eq:eom-beams}. We construct the corresponding invariant tori in physical coordinates; the FRC for quasi-periodic orbits that stay on these invariant tori is shown in the second panel of Fig.~\ref{fig:Bernoulli_tori_34}. The response curve of the quasi-periodic orbits intersects the response curve of the periodic orbits at the two HB points, HB3 and HB4. The response amplitude can be increased significantly when an unstable periodic orbit is perturbed and then converges to a stable quasi-periodic orbit.

\begin{figure}[!ht]
\centering
\includegraphics[width=0.45\textwidth]{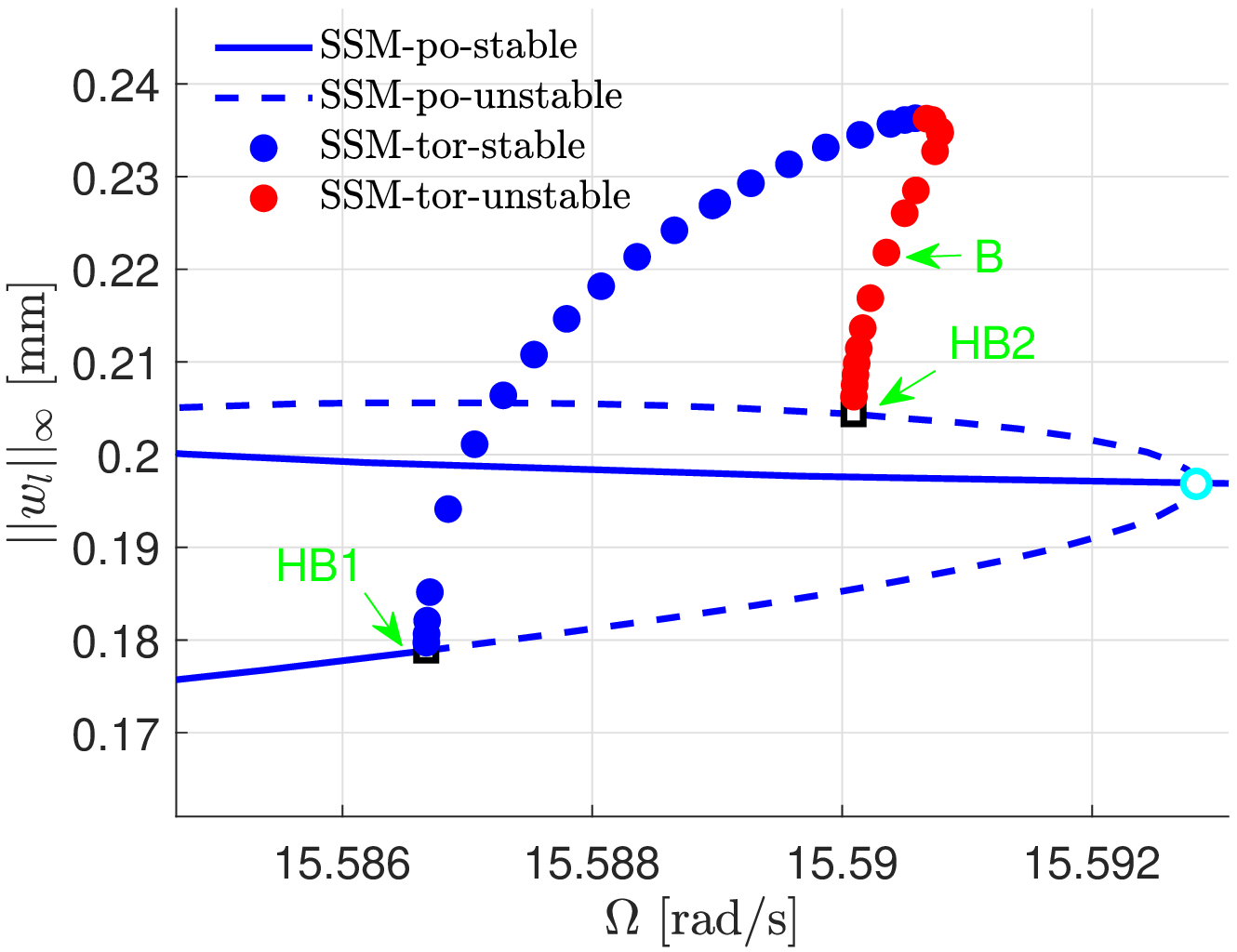}
\includegraphics[width=0.45\textwidth]{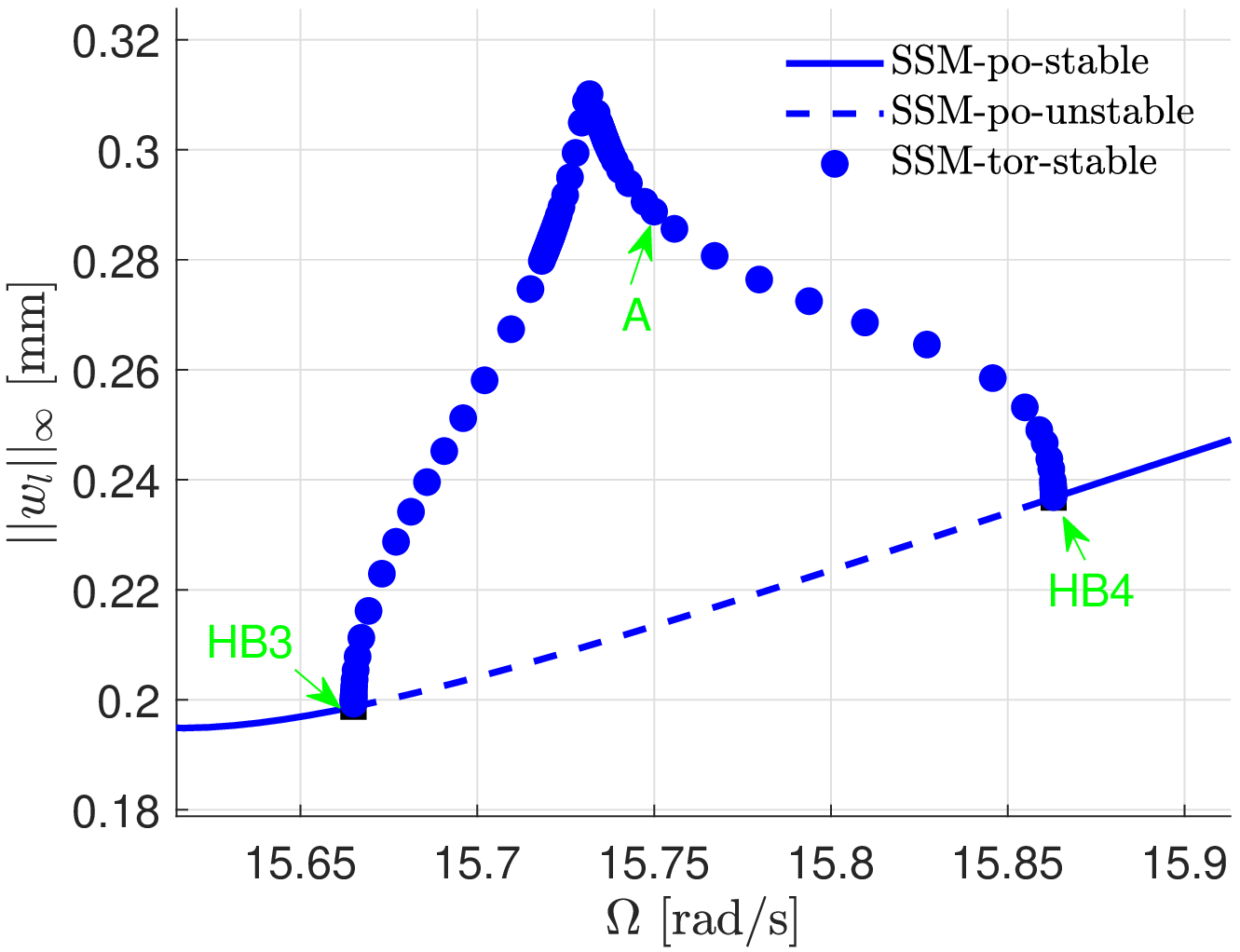}
\caption{FRCs for the vibration amplitude of quasi-periodic orbits at the end of the cantilever Bernoulli beam where it is supported by a nonlinear spring. The quasi-periodic orbits here bifurcate from HB1 and HB2 in the first panel, while the ones in the lower panel bifurcate from HB3 and HB4.}
\label{fig:Bernoulli_tori_34}
\end{figure}

To validate the results of the invariant tori obtained by SSM reduction, one may apply the \texttt{Tor} toolbox directly to the full system, just as we did in our first example. This would be, however, impracticable due to the high dimensionality of the problem. For instance, if we use 10-harmonics approximation in~\texttt{Tor}, we will have a multisegment boundary-value problem with 21 coupled segments, resulting in a 3360-dimensional phase space. Here we consider an alternative validation approach. Specifically, we check the convergence of the results of the invariant tori with the increment of the expansion orders of the SSM. Indeed, Theorem~\ref{th:SSM-existence-uniqueness} guarantees the existence and uniqueness of such an SSM and then the calculated SSM should converge to the unique one as the expansion order increases.

We have set the expansion order to be seven in previous computations such that the results of periodic orbits obtained by SSM-reduction match well with that of the collocation method. Here we focus on two representative tori, including a stable one with $\Omega=15.75$ and an unstable one with $\Omega=15.5905$, denoted as A and B, respectively, in Fig.~\ref{fig:Bernoulli_tori_34}, and study their convergence with respect to the expansion orders of the SSM. As can be seen in the first two panels of Fig.~\ref{fig:Bernoulli_ssm_convg}, the internal frequency (denoted by $\omega_\mathrm{s}$, which is related to the period $T_\mathrm{s}$ of the limit cycle in the reduced-order model via $\omega_\mathrm{s}=2\pi/T_\mathrm{s}$; see~Fig.~\ref{fig:Bernoulli_TandRho}) and the amplitude of each invariant torus converges fast with the increase of the expansion orders. In particular, the results for quasi-periodic responses converge well when the order is 11 or higher, and the results at order 7 are already close to the converged results.

\begin{figure}[!ht]
\centering
\includegraphics[width=0.39\textwidth]{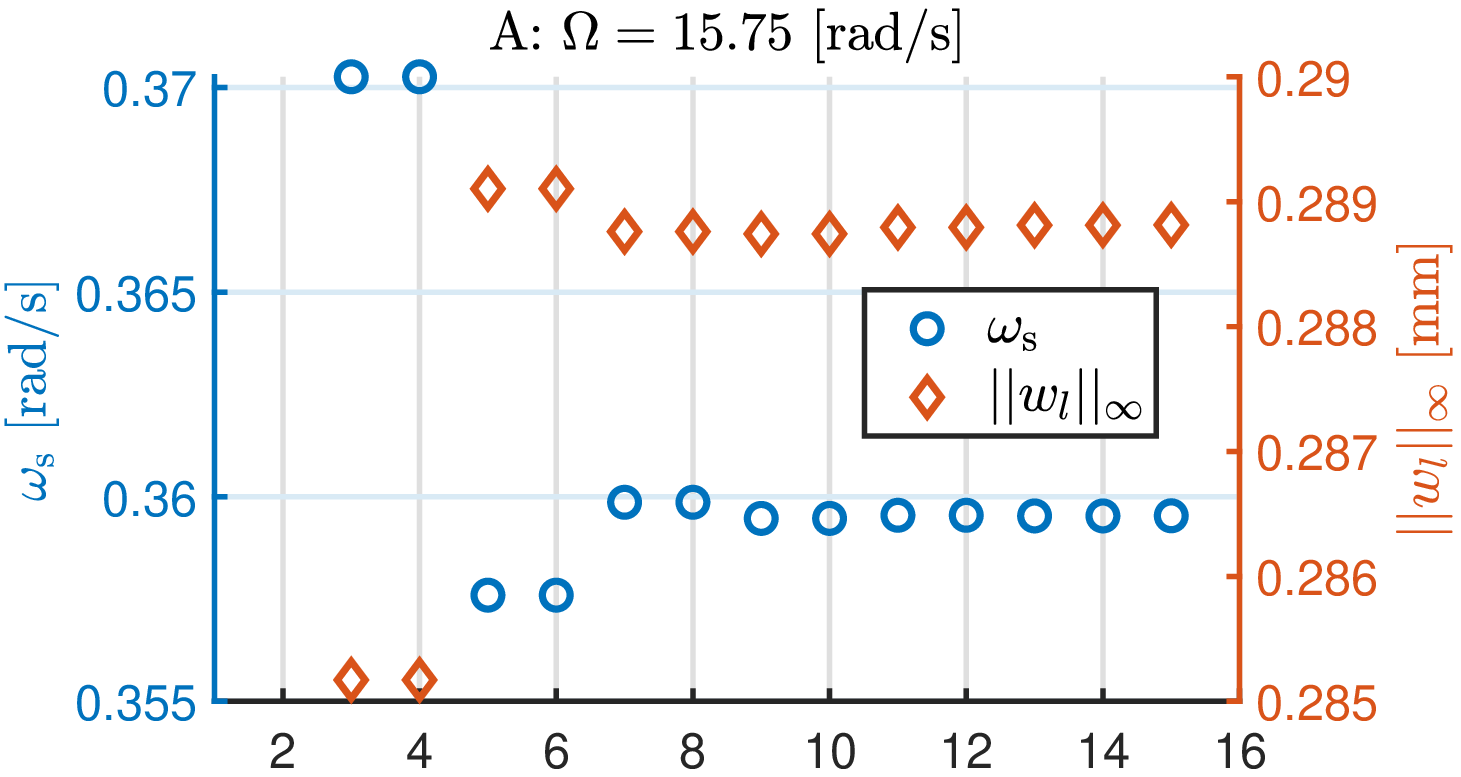}
\includegraphics[width=0.39\textwidth]{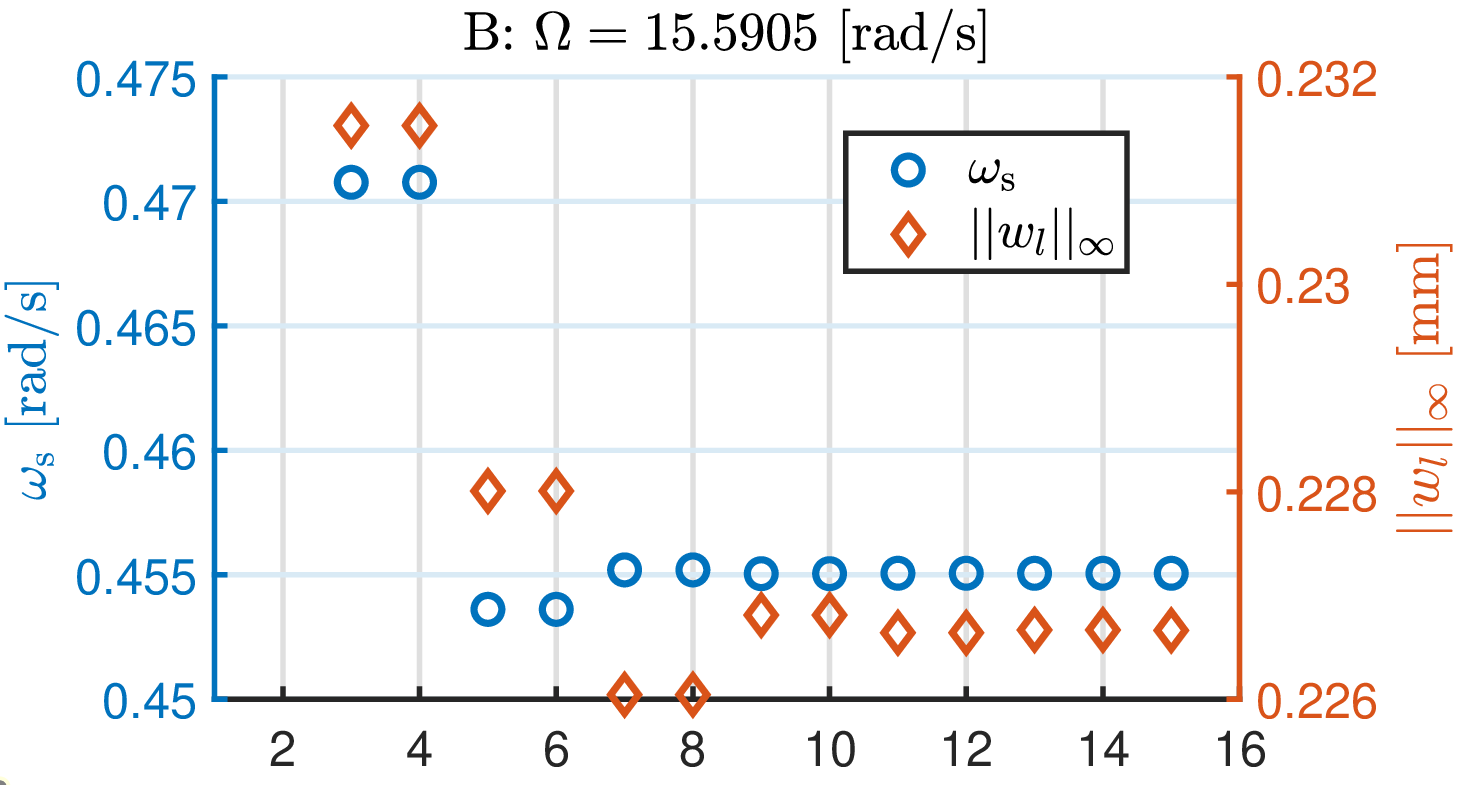}\\
\includegraphics[width=0.41\textwidth]{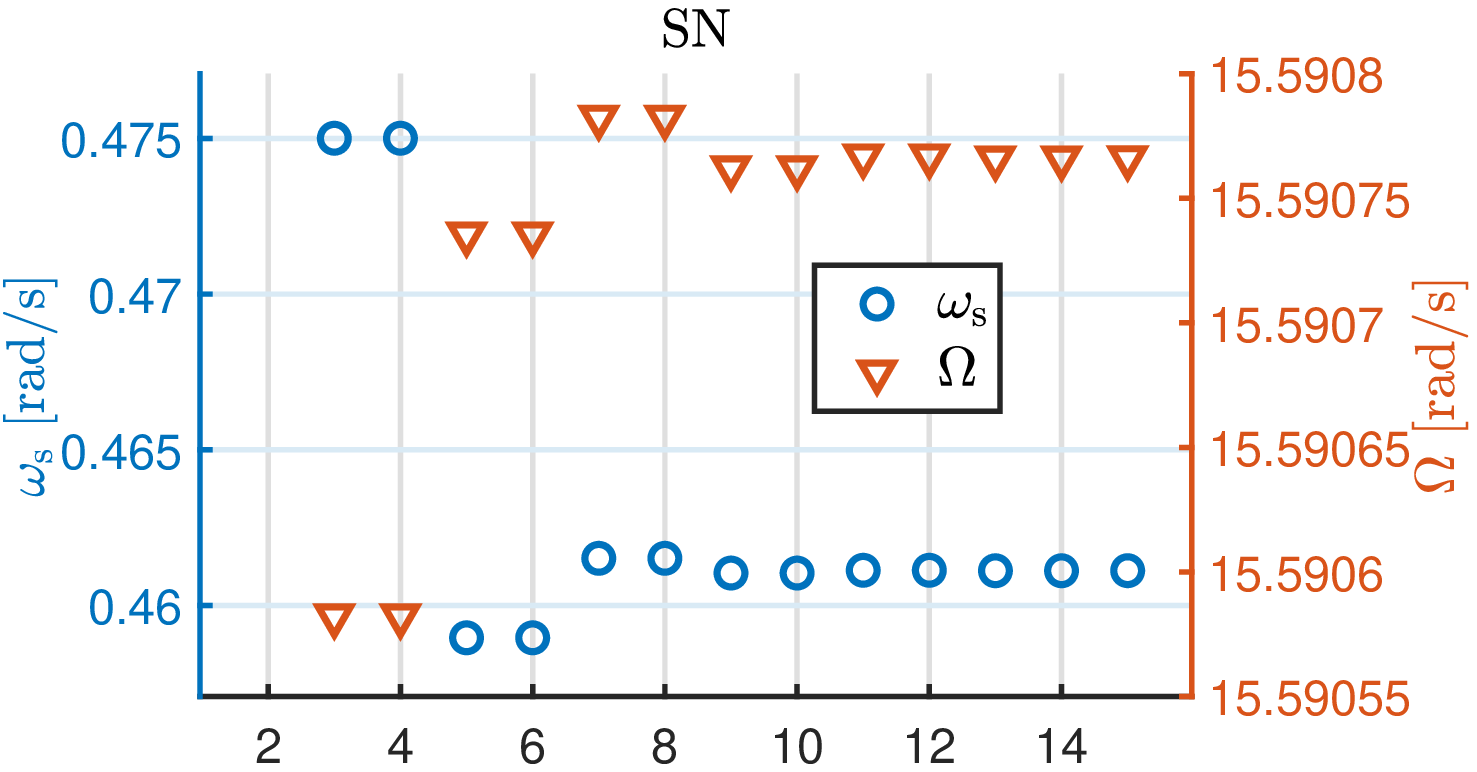}
\includegraphics[width=0.41\textwidth]{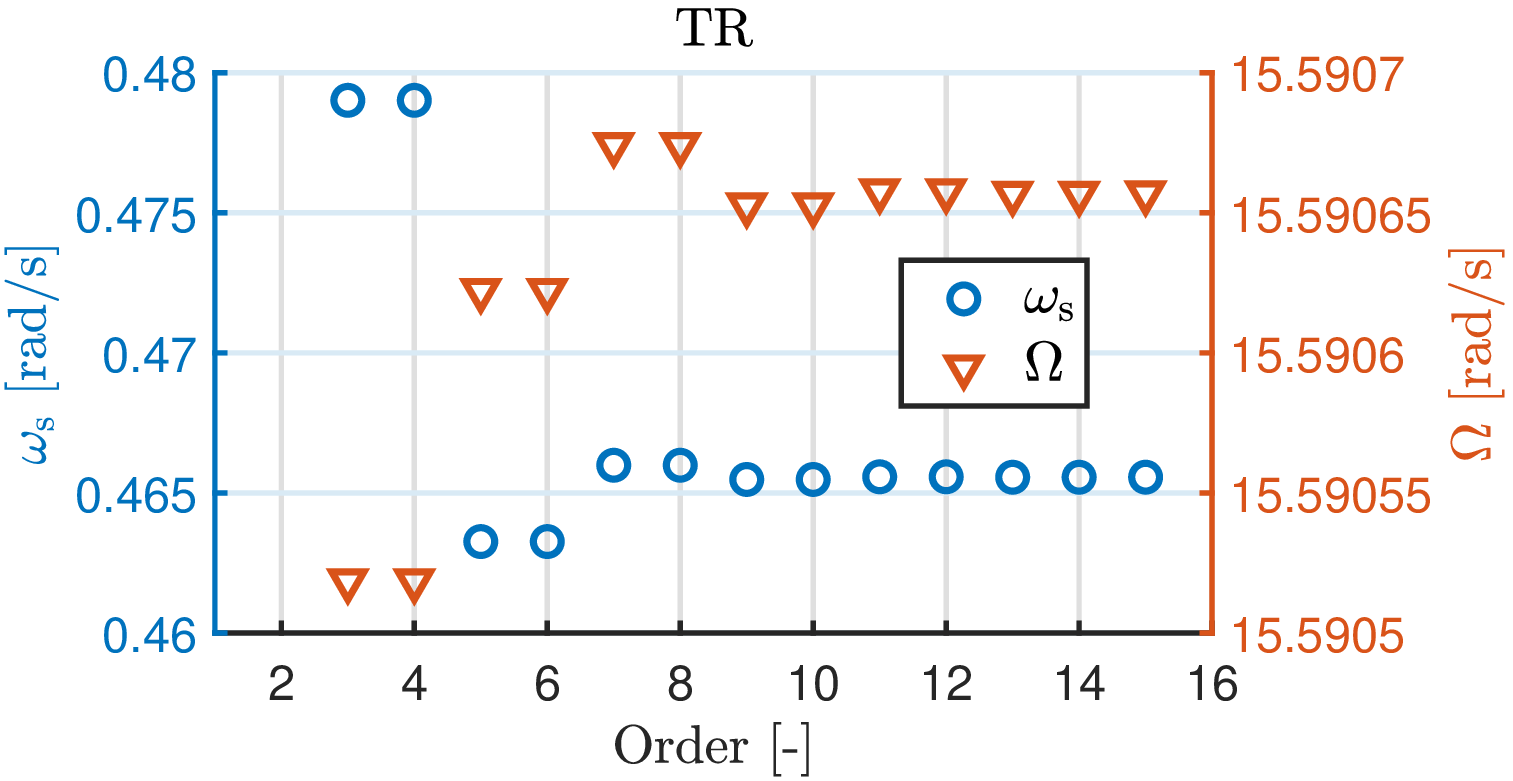}
\caption{Convergence of invariant torus solutions to the Bernoulli beam example under increasing expansion orders for the SSM. The first two panels show the convergence of the internal frequency and amplitude of tori A and B (see~Fig.~\ref{fig:Bernoulli_tori_34}), and the last two panels display the convergence of the two bifurcated invariant tori (cf.~Fig.~\ref{fig:Bernoulli_TandRho}).}
\label{fig:Bernoulli_ssm_convg}
\end{figure}

Recall that we have detected SN and TR bifurcation limit cycles in the leading-order SSM-reduced dynamics, as seen in Fig.~\ref{fig:Bernoulli_TandRho}. These correspond to SN and HB bifurcations of quasi-periodic orbits in the full system, respectively. We also check the convergence of these two types of invariant tori under increasing expansion orders of the SSM. The last two panels in Fig.~\ref{fig:Bernoulli_ssm_convg} present the two frequency components of these two invariant tori as functions of the expansion order. Again, these two invariant tori are converged when the expansion order is 11 or higher. The remaining computations in this example were performed with SSM expansion order 11.

\begin{sloppypar}
A unique two-dimensional invariant torus bifurcates from a TR bifurcation limit cycle, yielding a family of two-dimensional invariant tori under variations of $\Omega$ or $\epsilon$. We perform this continuation of two-dimensional invariant tori in the leading-order SSM-reduced model~\eqref{eq:ode-reduced-slow-cartesian-leading}. Specifically, we switch from the continuation of periodic orbits of the reduced-order model to the continuation of invariant tori at the TR bifurcation periodic orbit in Fig.~\ref{fig:Bernoulli_TandRho}. The two-dimensional invariant tori above correspond to the three-dimensional invariant tori of the full system. For such a invariant 3-torus, three frequency components exist: the \emph{external} excitation frequency $\Omega$ and two \emph{internal} frequencies $\omega_{1,\mathrm{s}}$ and $\omega_{2,\mathrm{s}}$. Here $2\pi/\omega_{2,\mathrm{s}}$ is of the same order as the period of the TR bifurcation periodic orbit. The dependence of \emph{internal} frequencies on the \emph{external} frequency is shown in Fig.~\ref{fig:Bernoulli_slow_T2_om1and2}.
We have $\mathcal{O}(\Omega)\sim10$, $\mathcal{O}(\omega_{2,\mathrm{s}})\sim0.1$ and $\mathcal{O}(\omega_{1,\mathrm{s}})\sim0.001$. Therefore, we have $\Omega\gg\omega_{2,\mathrm{s}}\gg\omega_{1,\mathrm{s}}$ and hence there exist three time scales in the computed three-dimensional invariant tori.
\end{sloppypar}

\begin{figure}[!ht]
\centering
\includegraphics[width=0.48\textwidth]{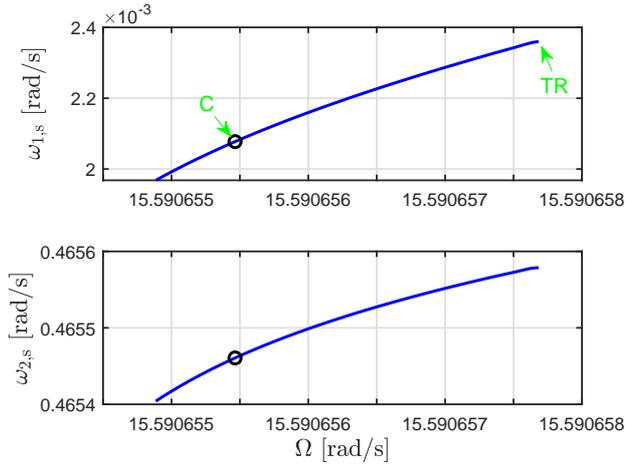}
\caption{Projections of the continuation path of two-dimensional tori in leading-order reduced dynamics~\eqref{eq:ode-reduced-slow-cartesian-leading} of the discretized Bernoulli beam model. The two panels here present the two internal frequencies of the tori as functions of the external frequency $\Omega$.}
\label{fig:Bernoulli_slow_T2_om1and2}
\end{figure}

\begin{sloppypar}
We can further obtain the corresponding three-dimensional invariant tori in the physical coordinates. As an illustration, the three-dimensional invariant torus corresponding to the two-dimensional invariant torus C in the leading-order SSM-reduced model (see~Fig.~\ref{fig:Bernoulli_slow_T2_om1and2}) is computed and its projection on $(w_{0.5l},w_l,\dot{w}_l)$ is plotted in the first panel of Fig.~\ref{fig:Bernoulli_T3_C}, where $w_{0.5l}$ denotes the transverse deflection at the midspan of the beam. For such a three-dimensional invariant torus, the intersection points of the period-$2\pi/\Omega$ Poincar\'e section with the torus form two-dimensional invariant torus, shown for torus C in the lower panel of Fig.~\ref{fig:Bernoulli_T3_C}.
\end{sloppypar}

\begin{figure}[!ht]
\centering
\includegraphics[width=0.45\textwidth]{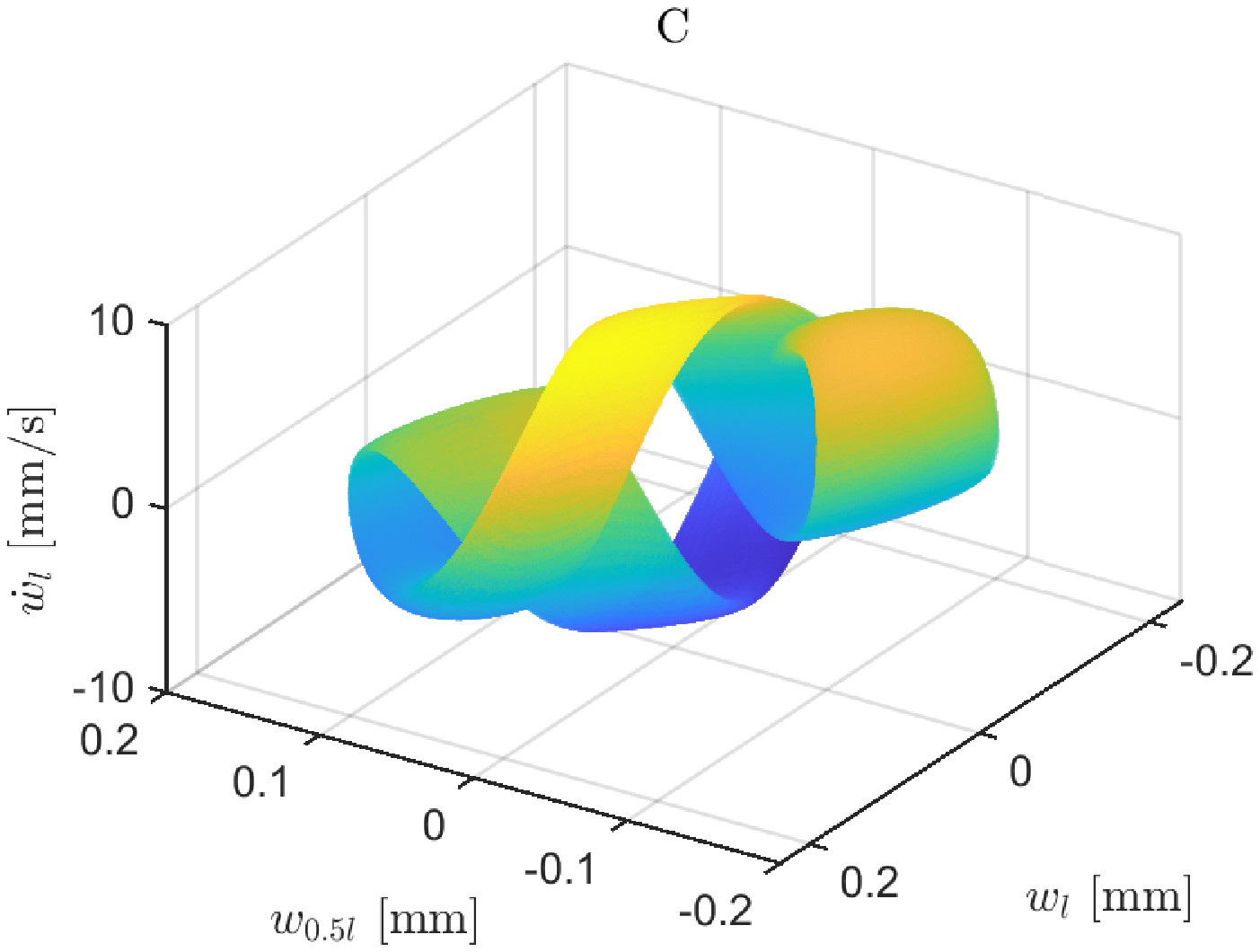}
\includegraphics[width=0.45\textwidth]{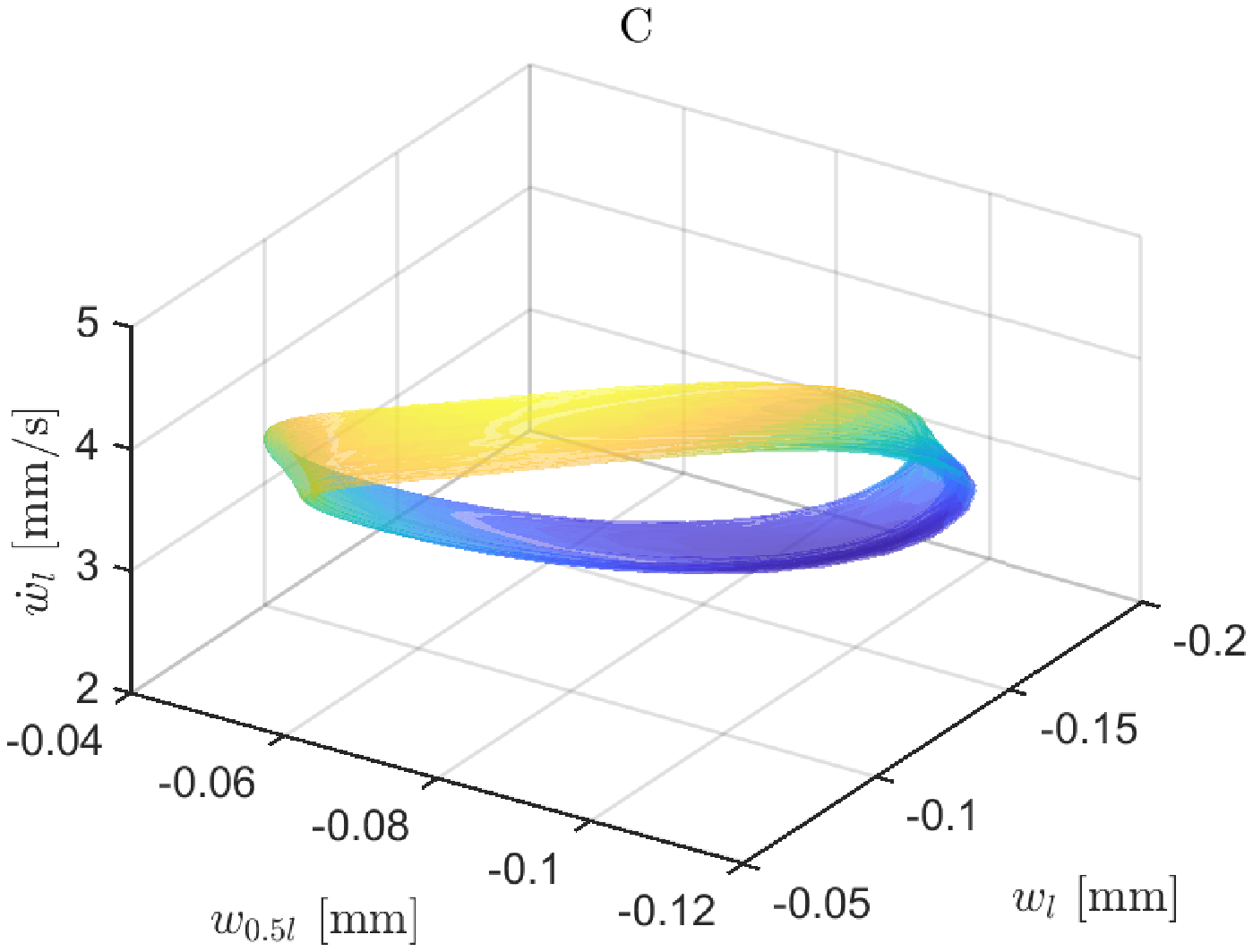}
\caption{Visualization of the three-dimensional invariant torus corresponding to the two-dimensional invariant torus C in the reduced-order model, cf.~Fig.~\ref{fig:Bernoulli_slow_T2_om1and2}, and its invariant surface of the period-$2\pi/\Omega$ map.}
\label{fig:Bernoulli_T3_C}
\end{figure}

\subsection{A forced von K\'arm\'an beam with support spring}
\label{sec:vonKarmanBeam}
In our third example, we revisit the von K\'arm\'an beam example studied in Part I, where the FRCs for periodic orbits of the beam discretized with various numbers of elements were computed. We demonstrated a significant speed-up gain for SSM-reduction relative to other techniques, including harmonic balance and the collocation. Here, we focus on the case of quasi-periodic orbits of the beam discretized with a different number of elements. As we will see, the calculation of FRCs for quasi-periodic orbits via SSM-reduction is also effective and efficient.

\begin{sloppypar}
Consider a clamped-pinned von K\'arm\'an beam with a supporting \emph{linear} spring at its midspan. The nonlinearity in this model comes from the axial stretching due to large transverse deflections. Unlike in the Bernoulli beam example, the nonlinearity here is distributed.
The geometric and material properties of this von K\'arm\'an beam are the same as those of the Bernoulli beam in the previous example, namely, we have $h=b=10\,\mathrm{mm}$, $l=2700\,\mathrm{mm}$, $\rho= 1780\times10^{-9}\,\mathrm{kg/{mm}^3}$ and $E=45\times10^6\,\mathrm{kPa}$, where $b,h,l$ are the width, height and length of the beam, $\rho$ is the density and $E$ denotes the Young's modulus. Following the finite element discretization in~\cite{jain2018exact,FEcode}, three degrees of freedom are introduced at each node: the axial and transverse displacements, and the rotation angle. The equation of motion can be written in the same form as~\eqref{eq:eom-beams} but with $\boldsymbol{x}\in\mathbb{R}^{3N_{\mathrm{e}}-2}$, where $N_\mathrm{e}$ again denotes the number of elements of the discrete model. We again use Rayleigh damping $\boldsymbol{C}=\alpha\boldsymbol{M}+\beta\boldsymbol{K}$ but with $\alpha=0$ and $\beta=\frac{2}{9}\times10^{-5}\,\mathrm{s}^{-1}$. A harmonic transverse force $\epsilon F\cos\Omega t$ is applied at the midspan of the beam.

Let the stiffness of the support spring be $k_\mathrm{s}$. We can tune the value of $k_\mathrm{s}$ to trigger 1:3 internal resonance between the first two bending modes. Indeed, we can set $k_\mathrm{s}=37\,\mathrm{kg/s^2}$ such that $\omega_2\approx3\omega_1$, as detailed in Part I. When the beam is discretized with 100 elements, we have $\omega_1=33.20\,\mathrm{rad/s}$ and $\omega_2=99.59\,\mathrm{rad/s}$. In the following computations, we set $F=1000$ and $\epsilon=0.02$. In Part I, we investigated the primary resonance of the first mode of the beam discretized with various numbers of elements. Specifically, we calculated the FRCs of periodic orbits of the finite element models with $N_{\mathrm{e}}\in\{$8, 20, 40, 100, 200, 500, 1,000, 3,000, 10,000$\}$ using the SSM reduction. The results from the SSM reduction for $N_\mathrm{e}\leq200$ were validated with the results from the full system obtained by the harmonic balance method using \textsc{nlvib} tool~\cite{krack2019harmonic}, the collocation method from the \texttt{po} toolbox of \textsc{coco}~\cite{dankowicz2013recipes} and direct numerical integration.

In the above-mentioned SSM analysis, HB fixed points were detected in the continuation of equilibria in the reduced-order model, indicating the existence of limit cycles that bifurcate from the HB points. Here we calculate the FRC of periodic orbits of the beam discretized with four elements to illustrate such HB points. The FRC is calculated with both the SSM reduction and the collocation method applied to the full system using the \texttt{po} toolbox of \textsc{coco}. The settings for algorithm parameters of the collocation and the continuation here are the same as those adopted in Part I. Namely, the number of subintervals is 10, the number of base points in each subinterval is 5, the maximal step size is 100 and the maximal residual for the predictor is 10. We show the resulting FRCs in the first two panels of Fig.~\ref{fig:FRCs-vonBeam-4}, where the dependence of the amplitudes of the transverse vibration at the 1/4 and midspan of the beam on $\Omega$ are plotted. The FRCs obtained by the two methods match well, with two HB fixed points detected around $\Omega=34.5$.
\end{sloppypar}
\begin{figure}[!ht]
\centering
\includegraphics[width=0.45\textwidth]{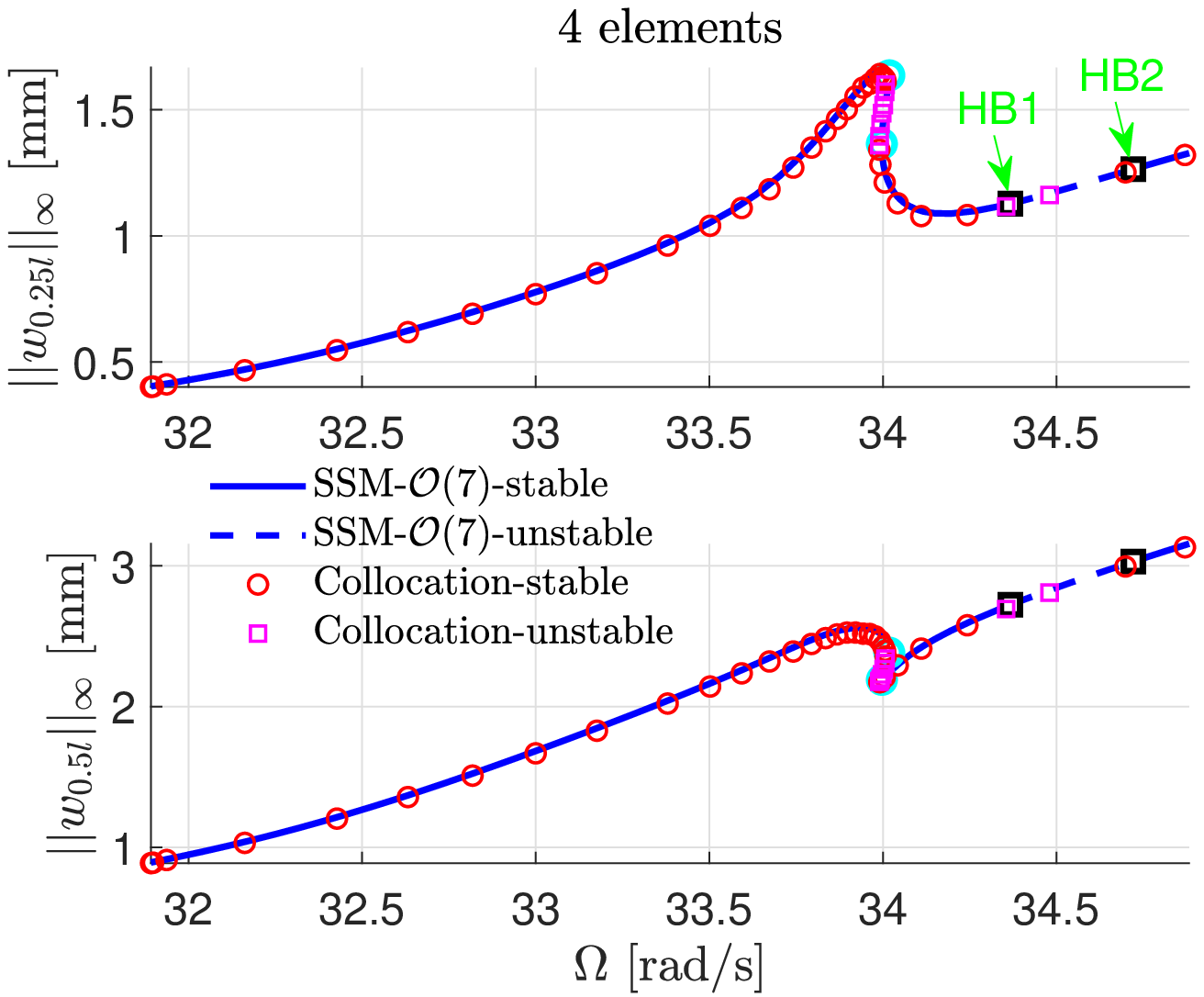}
\includegraphics[width=0.45\textwidth]{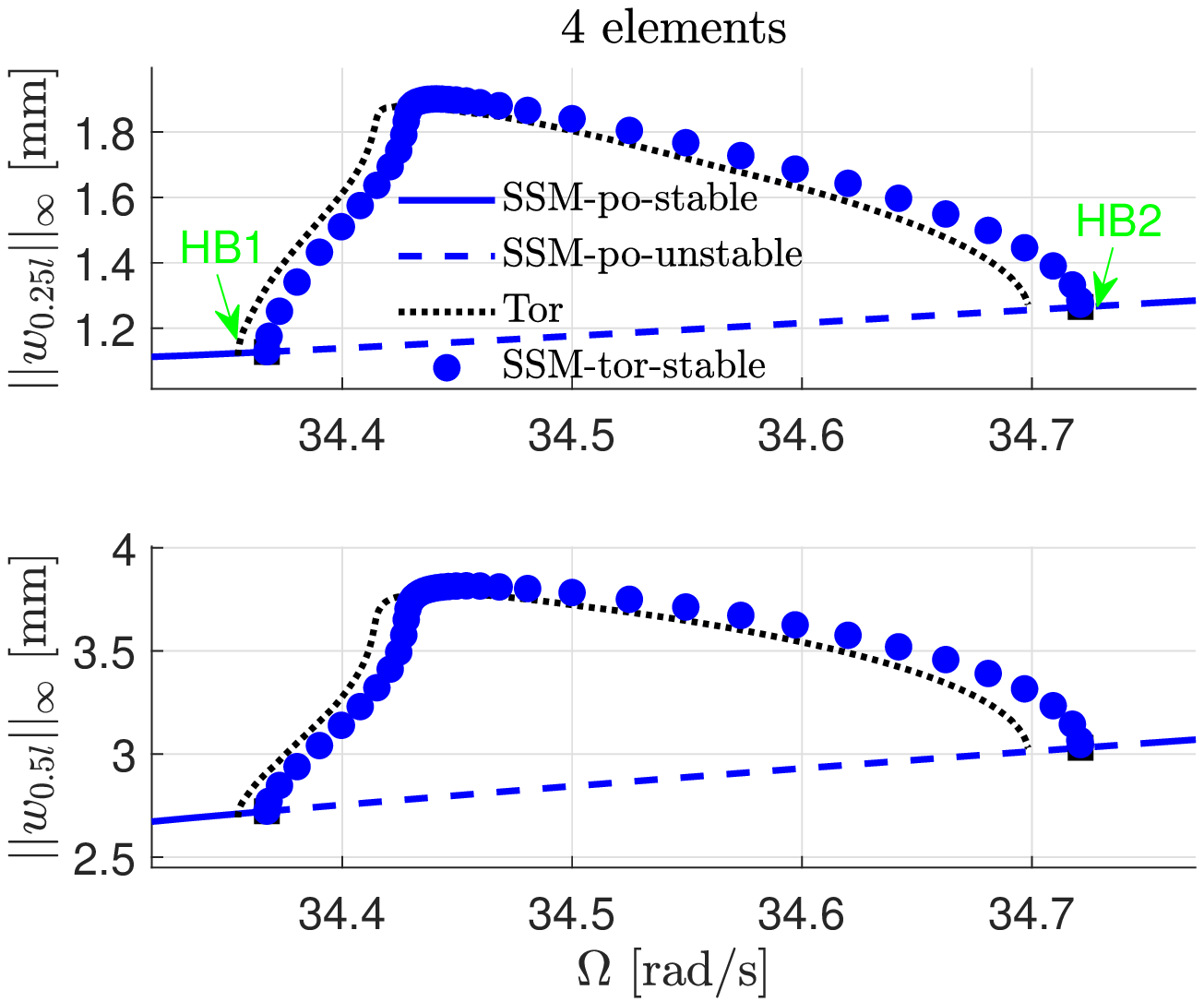}
\caption{FRCs for the periodic and quasi-periodic responses of transverse deflections at the 1/4 and midspan ($w$ at $0.25l$ and $0.5l$) of the clamped-pinned von K\'arm\'an beam discretized with four elements.}
\label{fig:FRCs-vonBeam-4}
\end{figure}

We switch from the continuation of equilibria in the reduced-order model to the continuation of limit cycles at HB1. Such a continuation run proceeds until $\Omega$ approaches the other Hopf bifurcation point, HB2. A one-parameter family of limit cycles in the reduced-ordre model is obtained and all of them are stable. For each limit cycle in the reduced-order model, an invariant torus of the full system is constructed. The FRCs for the quasi-periodic orbits that stay on these invariant tori have been plotted in the last two panels of Fig.~\ref{fig:FRCs-vonBeam-4}. The FRCs of the invariant tori intersect the FRCs of the periodic orbits at the two HB points.

We also perform the continuation of invariant tori of the full system directly using the \texttt{Tor} toolbox to validate the results of the SSM reduction. In the continuation run of periodic orbits of the full system with the \texttt{po} toolbox, two TR bifurcations of periodic orbits are found with modified event functions. The details of this modification are given in Appendix~\ref{sec:event-bif-po}. We then switch from the continuation of the periodic orbits of the full system to the continuation of the two-dimensional invariant tori of the full system at a TR bifurcation of periodic orbits. Here the number of harmonics used in the approximation of invariant tori is 10. As seen in the last two panels of Fig.~\ref{fig:FRCs-vonBeam-4}, the FRCs of quasi-periodic orbits obtained by the two methods match well overall, and only small discrepancies around the two bifurcation points were observed. In the above computation of FRCs of quasi-periodic orbits, the computation time for the SSM reduction is about one minute while the one for the \texttt{Tor} toolbox is more than two hours, which demonstrates the significant speed-up gain of the SSM reduction. In addition, numerical experiments suggest that the switch of continuation from periodic orbits to invariant tori in the \texttt{Tor} toolbox fails if the system is high-dimensional. In contrast, the dimension of the reduced-order model on the resonant SSM is four, independently of the number of elements of the discrete model.

Next, we have a close look at the discrepancies of the quasi-periodic responses obtained by the two methods around the two HB points, as seen in the last two panels of Fig.~\ref{fig:FRCs-vonBeam-4}. Four possible factors leading to these discrepancies are: i) the expansion order of the autonomous part of the SSM is not high enough, ii) the leading-order approximation of the non-autonomous part of the SSM is not sufficient to yield accurate results, iii) the number of harmonics used in the direct computation with the \texttt{Tor} toolbox is not enough, and iv) the uniform, fixed mesh used in the collocation method is not of high enough fidelity to yield accurate results. We increase the number of harmonics from 10 to 20 in the computation with the \texttt{Tor} toolbox. The results with 20 harmonics are in agreement with those from 10 harmonics. Therefore, the potential factor iii) can be ruled out. Factor ii) is out of reach in this study and we hence will focus on the remaining two factors in the following two paragraphs.

\begin{sloppypar}
Note that the numerical discrepancies can be mainly characterized by the positions of the two HB points, namely, by the critical $\Omega$ values where HB occurs. We denote these two critical frequencies as $\Omega_{\mathrm{HB1}}$ and $\Omega_{\mathrm{HB2}}$. We have
\begin{equation}
    \Omega_{\mathrm{HB1}}^{\mathrm{SSM-7}}=34.367\,\mathrm{rad/s},\,\, \Omega_{\mathrm{HB2}}^{\mathrm{SSM-7}}=34.721\,\mathrm{rad/s}
\end{equation}
for the SSM reduction with expansion order equal to 7 (cf.~Fig.~\ref{fig:FRCs-vonBeam-4}), and
\begin{equation}
    \Omega_{\mathrm{HB1}}^{\mathrm{coll}}=34.355\,\mathrm{rad/s},\,\, \Omega_{\mathrm{HB2}}^{\mathrm{coll}}=34.699\,\mathrm{rad/s}
\end{equation}
for the collocation method with the fixed mesh. The relative errors in terms of $\Omega_{\mathrm{HB1}}$ and $\Omega_{\mathrm{HB2}}$ obtained by the two methods are 0.035\% and 0.063\%, respectively, which are quite small. The convergence of the two critical frequencies with the increment of the expansion order of the SSM is presented in Fig.~\ref{fig:vonKarmanHBorders}, from which we see that the two critical frequencies are converged when the expansion order is 11 or higher. The convergent critical frequencies are given by
\begin{equation}
    \Omega_{\mathrm{HB1}}^{\mathrm{SSM*}}=34.364\,\mathrm{rad/s},\,\, \Omega_{\mathrm{HB2}}^{\mathrm{SSM*}}=34.717\,\mathrm{rad/s}.
\end{equation}
The relative errors in terms of $\Omega_{\mathrm{HB1}}$ and $\Omega_{\mathrm{HB2}}$ are updated as 0.026\% and 0.052\% after the substitution of the converged critical frequencies. Therefore, the results at order 7 are very close to the converged results, and increasing the expansion order of SSM slightly decreases the discrepancies.
\end{sloppypar}

\begin{figure}[!ht]
\centering
\includegraphics[width=0.4\textwidth]{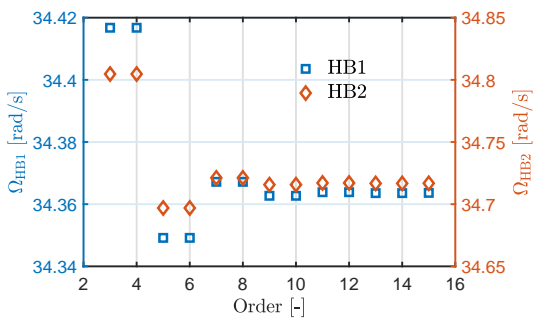}
\caption{Convergence of critical frequencies for the two HB points in FRC of the clamped-pinned von K\'arm\'an beam with increasing expansion order for the SSM.}
\label{fig:vonKarmanHBorders}
\end{figure}

We also apply the collocation method with adaptive meshes to the full system to obtain the FRC of the periodic orbits. Specifically, the mesh is initialized as before (10 subintervals with five base points in each subinterval) and then adaptively changed after every continuation step. In this run, the two critical frequencies are obtained as
\begin{equation}
    \Omega_{\mathrm{HB1}}^{\mathrm{coll*}}=34.352\,\mathrm{rad/s},\,\, \Omega_{\mathrm{HB2}}^{\mathrm{coll*}}=34.718\,\mathrm{rad/s}.
\end{equation}
The relative errors for $\Omega_{\mathrm{HB1}}$ and $\Omega_{\mathrm{HB2}}$ are further updated as 0.035\% and 0.003\% after the substitution of the collocation results with adaptive mesh.
So the discrepancy around the second HB point (see~Fig.~\ref{fig:FRCs-vonBeam-4}) can be significantly reduced if we use the collocation method with adaptive mesh. These discrepancies might be further reduced when the contribution of higher-order non-autonomous SSM is considered.

Note that it is impractical to switch from the continuation of periodic orbits of the full system to the continuation of invariant tori using the collocation method with adaptive mesh. In particular, the number of base points for the critical periodic orbit at HB1 is increased from 41 to 401 when the mesh is allowed to be changed, and then the computational time for a {single} continuation step of the invariant tori is increased to a few hours. In addition, the Newton iteration of such high-dimensional nonlinear equations for the continuation problem of the invariant tori fails to converge at the switch point. These challenges are circumvented in the SSM reduction.

We now increase the number of elements to further demonstrate the effectiveness and efficiency of the SSM reduction. For each discretized beam model, the above SSM analysis is performed. Specifically, the calculation of FRC for the periodic orbits of a full system is conducted first. With the detected HB fixed points in the corresponding leading-order SSM-reduced model at hand, we switch to the computation of the FRC for quasi-periodic orbits of the full system. We still use the expansion order 7 because the results already have good accuracy (see~Fig.~\ref{fig:vonKarmanHBorders} and the Fig.~11 of Part I) and this order is consistent with the computation of this example in Part I. The FRCs obtained in this fashion for quasi-periodic orbits are presented in Fig.~\ref{fig:FRCs-vonBeam-physics}, where the number of elements is increased from 8 to 200. Results for $N_{\mathrm{e}}\in\{$500, 1,000, 3,000, 10,000$\}$ are not plotted here because the results at $N_{\mathrm{e}}=200$ are already converged with respect to increases in the number of elements.

\begin{figure*}[!ht]
\centering
\includegraphics[width=0.45\textwidth]{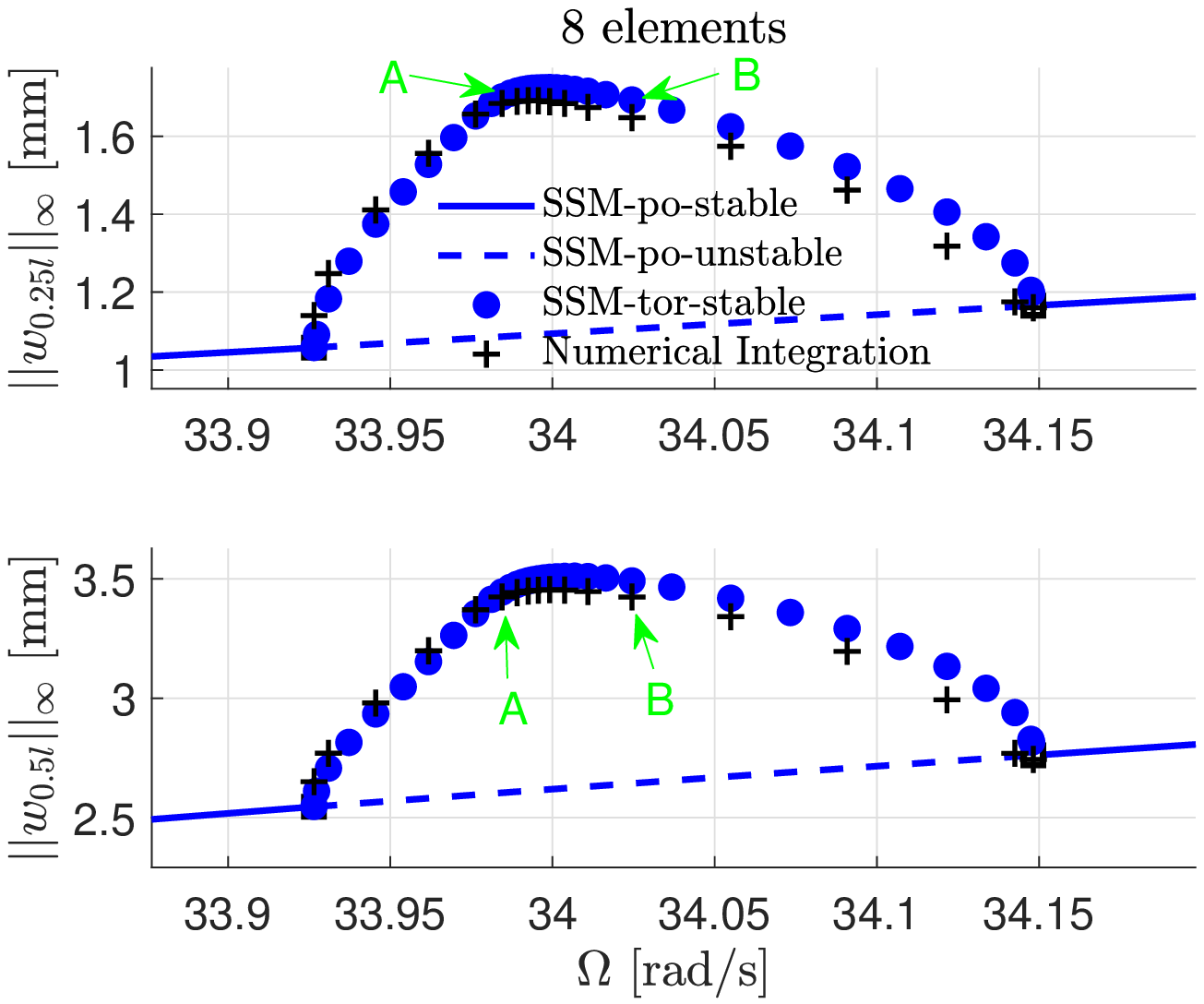}
\includegraphics[width=0.45\textwidth]{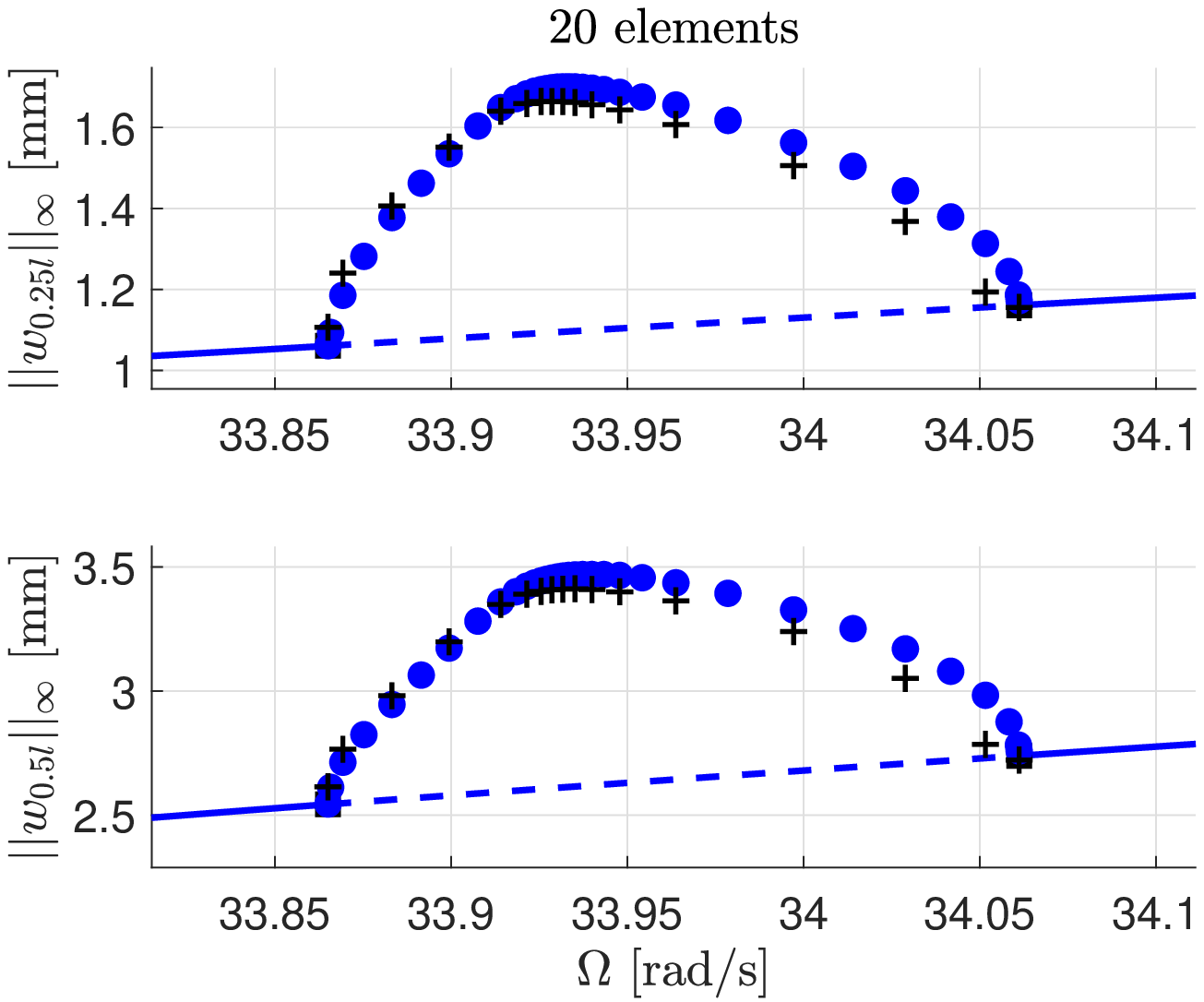}\\
\includegraphics[width=0.45\textwidth]{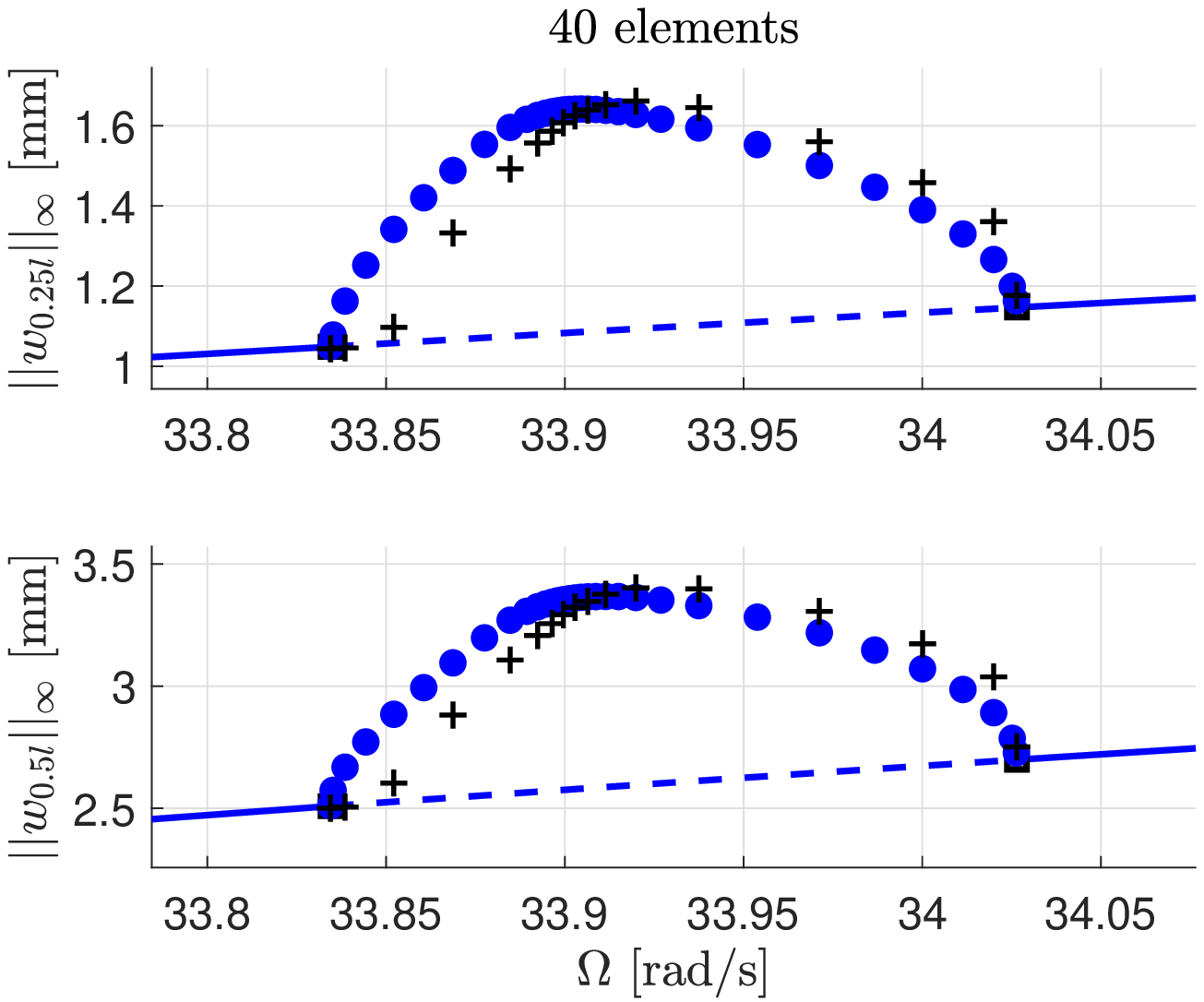}
\includegraphics[width=0.45\textwidth]{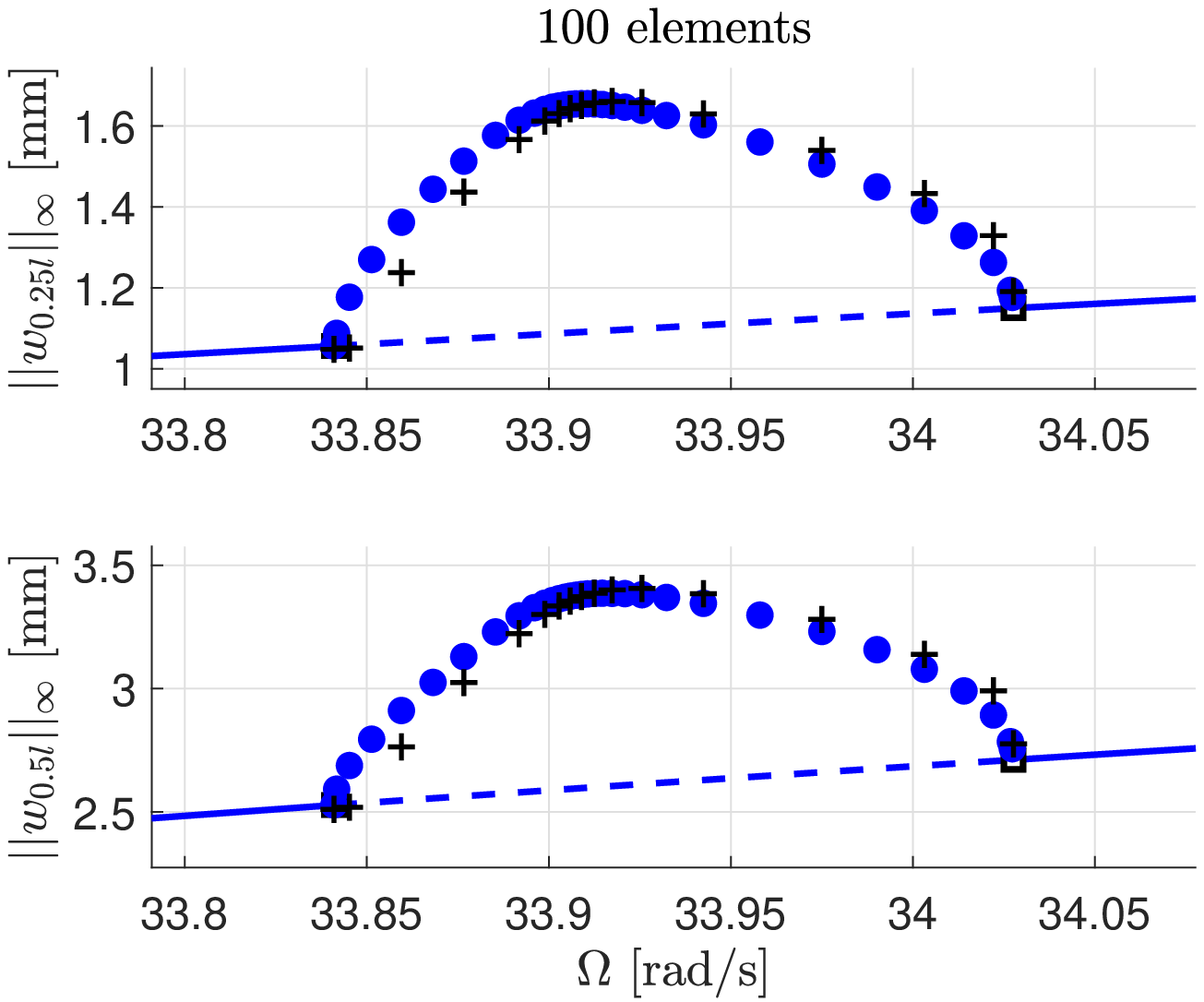}\\
\includegraphics[width=0.45\textwidth]{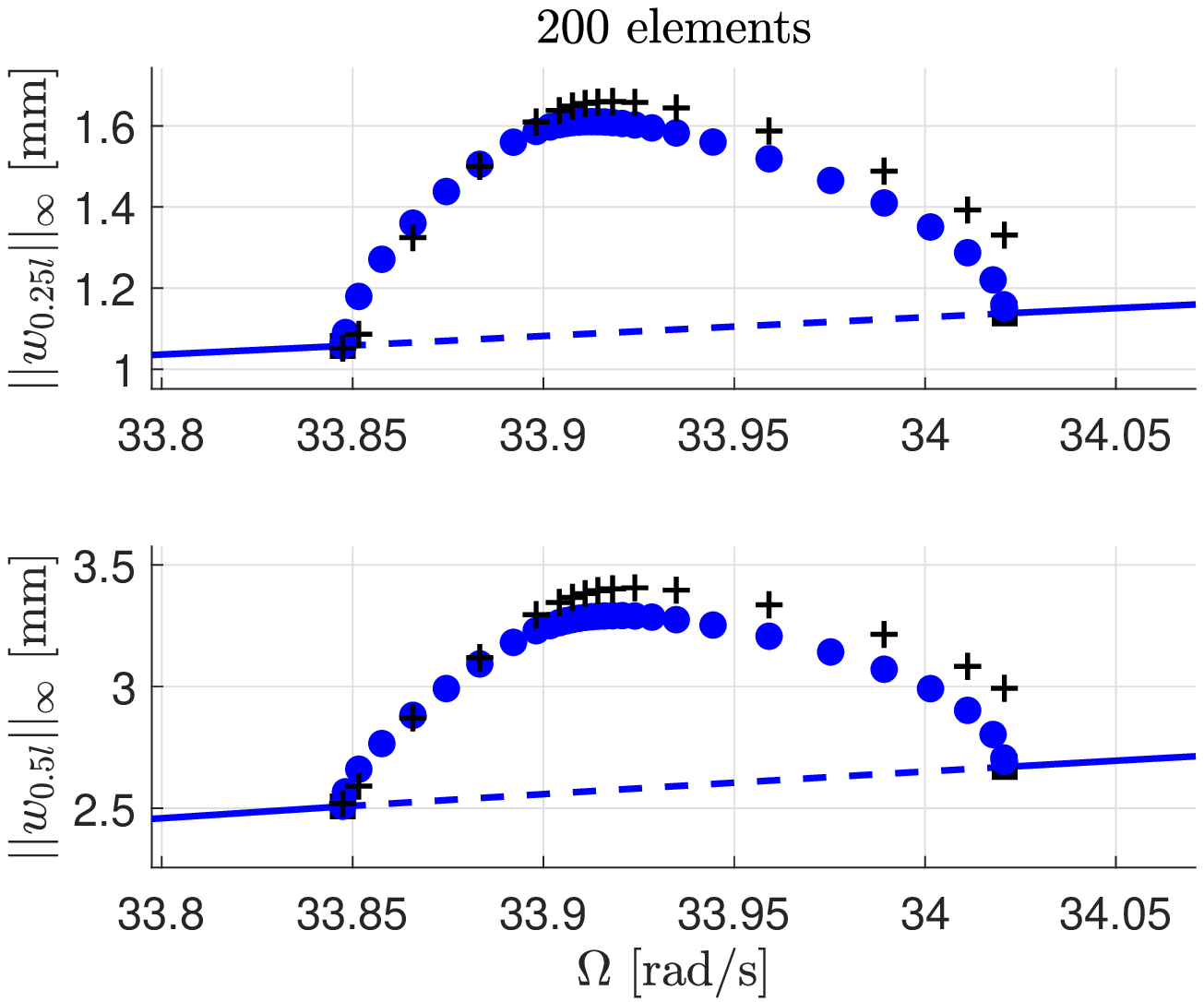}
\caption{FRCs in physical coordinates (the amplitude of transverse displacement $w$ at $0.25l$ and $0.5l$) of the clamped-pinned von K\'arm\'an beam discretized with different numbers of elements. Here we are only concerned with quasi-periodic responses. The reader may refer to Fig.~13 in Part I for the validation of periodic responses obtained from SSM reduction.}
\label{fig:FRCs-vonBeam-physics}
\end{figure*}

To validate the results for the tori obtained from SSM reduction, we have performed numerical integration of the \emph{full} system. The location of the invariant torus obtained from the leading-order SSM-reduced dynamics~\eqref{eq:ode-reduced-slow-cartesian-leading}, however, is only approximate. Due to the physically relevant weak damping we use, short trajectory segments will show a toroidol shape, even if there is no actual invariant. At the same time, computing longer trajectory segments accurately comes with significant numerical cost in our high-dimensional setting. In Appendix~\ref{sec:app-tf-choice} we describe an algorithm for selecting an optimal simulation time $t_\mathrm{f}$ that addresses both of these issues.

Using the algorithm described in Appendix~\ref{sec:app-tf-choice}, we consider two quasi-periodic responses of the beam discretized with 8 elements: A and B in the upper left panels of Fig.~\ref{fig:FRCs-vonBeam-physics}. For each response trajectory, the projection of the period-$2\pi/\Omega$ map onto the plane $(w_{0.5l},\dot{w}_{0.5l})$ is plotted along with the prediction of the leading-order SSM reduction in Fig.~\ref{fig:vonKarmanABC}. These intersection points stay on an invariant curve of the map if the trajectory stays on a torus. The two panels in Fig.~\ref{fig:vonKarmanABC} present the two representative cases of the results of the numerical integration. In the first panel, the trajectory of the numerical integration stays very close to the torus predicted by the SSM reduction throughout the simulation, indicating high accuracy of the SSM reduction. In the second panel of Fig.~\ref{fig:vonKarmanABC}, the trajectory of the numerical integration converges to an invariant torus which is close to the one predicted by the SSM reduction, as indicated by the closeness of the two closed curves. We also apply the algorithm in Appendix~\ref{sec:app-tf-choice} to other tori. As can be seen in Fig.~\ref{fig:FRCs-vonBeam-physics}, the FRCs for quasi-periodic orbits from SSM reduction match well with the numerical integration of the full system.

\begin{figure}[!ht]
\centering
\includegraphics[width=0.38\textwidth]{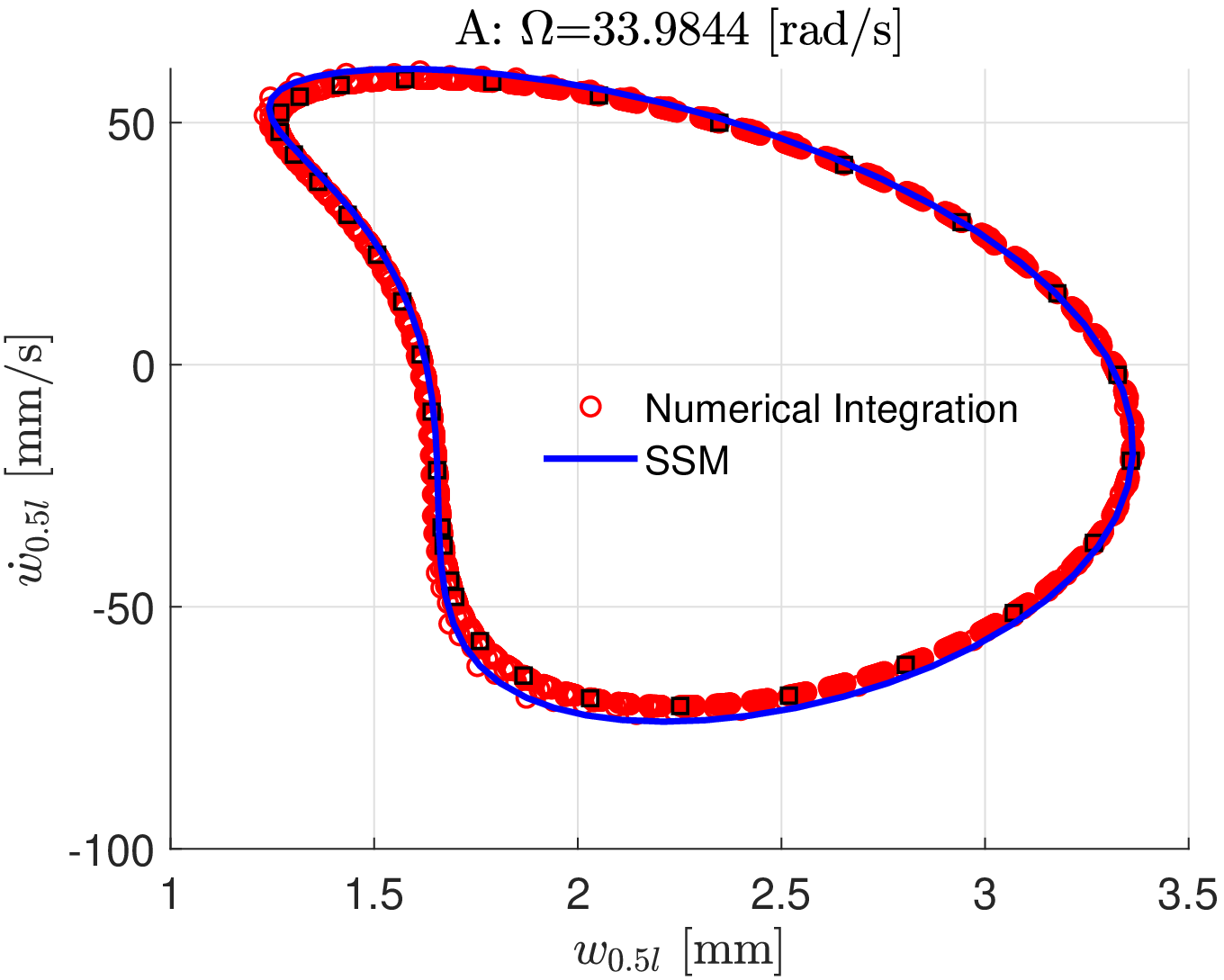}
\includegraphics[width=0.38\textwidth]{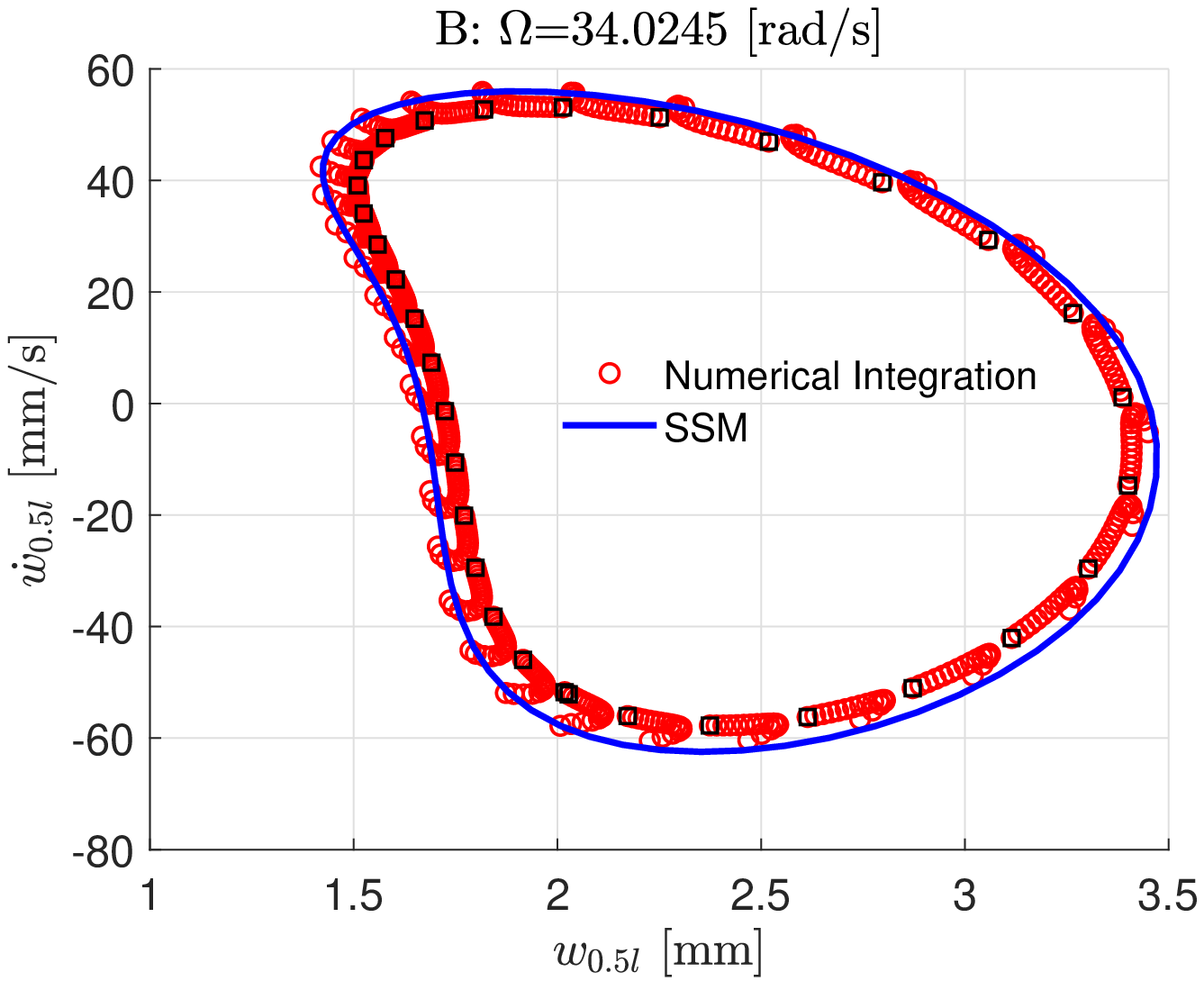}
\caption{Intersections of the period-$2\pi/\Omega$ map of sampled tori of clampled-pinned von K\'arm\'an beam discretized with 8 elements, approximated by the limit cycles from the SSM analysis (solid lines) and a collection of points in the numerical integration (circles). Here the black squares are a subset of red circles in the final stage of the integration. They are used to indicate the evolution of the intersections.}
\label{fig:vonKarmanABC}
\end{figure}

The computational times of FRCs for periodic orbits and quasi-periodic orbits with various numbers of degrees of freedom (DOFs) are summarized in Fig.~\ref{fig:vonKarmanBeam_compTime}. For the SSM reduction, it only took about one hour to obtain the FRC for both periodic orbits and quasi-periodic orbits in the case of 10,000 elements with 29,998 DOFs. When the number of DOF is less than or equal to 598, the computational time of invariant tori is larger than that of the periodic orbits, which is reasonable because the continuation of periodic orbits in the reduced-order model needs more time compared to the continuation of fixed points. Note that the autonomous SSM obtained in the computation of FRC of periodic orbits is used directly in the computation of FRC of quasi-periodic orbits. For this reason, the computational time for the autonomous SSM is not accounted for in the runtime of FRC for quasi-periodic orbits. When the number of DOF is higher, this part of the computational time becomes significant. Indeed, as can be seen in Fig.~\ref{fig:vonKarmanBeam_compTime}, the computational time for FRC of periodic orbits exceeds that of quasi-periodic orbits for $N_\mathrm{e}\geq3,000$.

\begin{figure}[!ht]
\centering
\includegraphics[width=0.48\textwidth]{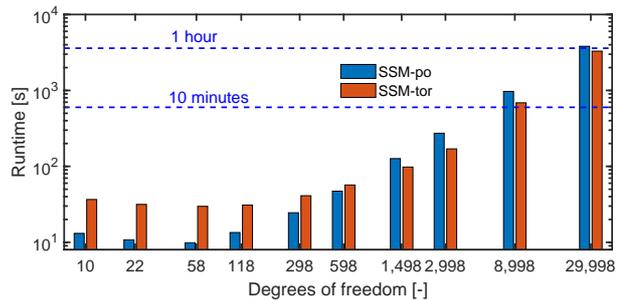}
\caption{Computational times of FRCs of the clamped-pinned von K\'arm\'an beam discretized with different number of DOFs. The number of DOFs is given by $3N_{\mathrm{e}}-2$ when the beam is discretized with $N_{\mathrm{e}}$ elements. Here we have $N_{\mathrm{e}}\in\{$4, 8, 20, 40, 100, 200, 500, 1,000, 3,000, 10,000$\}$. The upper bound of $N_{\mathrm{e}}$ is set to be 10,000 to avoid the accumulation of truncation errors induced by over-refined meshes. Here and in the next figure, the labels `SSM-po' and `SSM-tor' denote the computational times of FRCs of periodic and quasi-periodic orbits, respectively.}
\label{fig:vonKarmanBeam_compTime}
\end{figure}

We conclude this example by showing how the computational costs evolve when the expansion order of SSM increases. We fix the number of elements $N_\mathrm{e}=10,000$ and conduct the computations of FRCs for periodic orbits and quasi-periodic orbits with various expansion orders of the SSM. As seen in Fig.~\ref{fig:vonKarmanBeam_compTime_orders}, the computational time for FRC of periodic orbits increases significantly with the increment of the orders, while the computational time for FRC of quasi-periodic orbits increases slowly with increasing expansion orders.

\begin{figure}[!ht]
\centering
\includegraphics[width=0.4\textwidth]{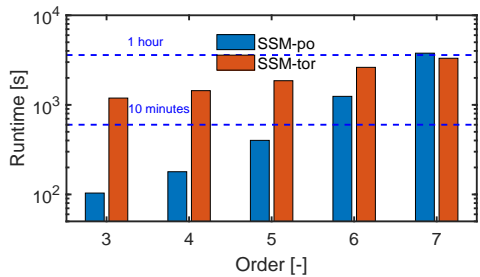}
\caption{Computational times of FRCs of the clamped-pinned von K\'arm\'an beam discretized with 29,998 DOFs as functions of the expansion order of the SSM.}
\label{fig:vonKarmanBeam_compTime_orders}
\end{figure}

As discussed in section~\ref{sec:compLoad}, the computational time for FRC of periodic orbits can be decomposed into the following four parts: autonomous SSM, reduced dynamics, nonautonomous SSM and the evaluation of map $\boldsymbol{W}(\boldsymbol{p})$. The computational time for the autonomous SSM is dominated among these four parts when the expansion order is high. Specifically, the proportions of the computational times used to compute the autonomous SSM at expansion orders 3, 5 and 7 are 14\%, 73\% and 95\%, respectively.

The computational time for FRC of quasi-periodic orbits here can be decomposed into three parts: reduced dynamics, nonautonomous SSM and evaluation of map $\boldsymbol{W}(\boldsymbol{p})$, because the autonomous SSM has been obtained from the computation of FRC of periodic orbits. We find that more than 97\% of the computational time is used to evaluate the map $\boldsymbol{W}(\boldsymbol{p})$ for tori, independently of the expansion orders. This can be explained by the fact that the number of evaluations of the map $\boldsymbol{W}$ is huge. Specifically, the map needs to be evaluated $n_\mathrm{tor}\cdot n_\mathrm{traj}\cdot n_\mathrm{pt}$ times, where $n_\mathrm{tor}$ is the number of computed tori, $n_\mathrm{traj}$ is the averaged number of trajectories for approximating a torus, and $n_\mathrm{pt}$ is the number of points for each of the trajectories. We have $n_\mathrm{tor}\approx50$, $n_\mathrm{traj}\approx60$ and $n_\mathrm{pt}=128$, resulting in approximately 400,000 evaluations. As discussed in section~\ref{sec:compLoad}, these evaluations can be parallelized to speed up the computation.

\subsection{A forced simply supported von K\'arm\'an plate}
\label{sec:vonKarmanPlate}
In our fourth example, we revisit the von K\'arm\'an square plate example studied in Part I. Due to the symmetry of the plate, the natural frequencies of the second and third bending modes are equal, leading to a 1:1 internal resonance. In Part I, we computed the FRC for the periodic orbits of the plate. Here we are concerned with the quasi-periodic responses of the plate.

\begin{sloppypar}
The geometry of the plate is defined as $(x,y,z)\in[0,l]\times[0,l]\times[-0.5h,0.5h]$, where $l$ and $h$ are the length (width) and thickness of the plate, respectively. The plate is discretized with triangular elements. Provided that the length of the plate is uniformly divided into $n_{\mathrm{p}}$ subintervals, the number of elements of the discretized plate is $N_{\mathrm{e}}=2n_{\mathrm{p}}^2$. The reader may refer to Part I for more details about the mesh. 

If $u,v,w$ denote the displacements along the $x,y,z$ directions respectively, then six DOFs need to be introduced at each node: $(u,v,w,w_x,w_y,u_y-v_x)$. Therefore, the number of DOFs of the discrete plate with boundary conditions accounted for is given by $n=6(n_{\mathrm{p}}^2+1)$.  The equation of motion of the finite element model has the same form as~\eqref{eq:eom-beams}. The reader may refer to~\cite{allman1976simple,allman1996implementation,FEcode} for the mass and stiffness matrices and the coefficients of the nonlinear terms. We again use Rayleigh damping $\boldsymbol{C}=\alpha\boldsymbol{M}+\beta\boldsymbol{K}$. The plate is excited by a harmonic force $\epsilon F\cos\Omega t$ applied at a point A with coordinates $(x,y,z)=(0.2l,0.3l,0.5h)$. In the following computations, we use the same geometric and material parameters as in Part I. Specifically, we have $l=1\,\textrm{m}$, $h=0.01\,\textrm{m}$, Young's modulus $E=70\times 10^9\,\mathrm{Pa}$, Poisson's ratio ratio $\nu=0.33$ and density $\rho=2700\,\mathrm{kg}/\textrm{m}^3$. The two damping coefficients in the Rayleigh damping are chosen as $\alpha=1$ and $\beta=1\times10^{-6}$ such that the system is weakly damped (see~\eqref{eq:weakDampFreq}).

In Part I, the FRCs of periodic orbits for $n_\mathrm{p}=10,20,40,100,200$ have been computed. In the case of $n_\mathrm{p}=200$, the number of DOFs is 240,006, and the computational time for the FRC is less than one day, which highlights the remarkable computational efficiency of the SSM reduction. In the FRC with $n_\mathrm{p}=10$, two SN fixed points are found but no HB fixed points are detected. However, a pair of HB fixed points is observed at $n_\mathrm{p}=20$ which persist for meshes with higher fidelity. With $n_\mathrm{p}=20$, the number of DOF is 2,406. For this case, the FRCs over $\Omega\in[0.95, 1.1]\omega_2$ for the transverse deflections at the point A and the point B with coordinates $(x,y,z)=(0.7l,0.3l,0.5h)$ are presented in Fig.~\ref{fig:FRC_vonKarmanPlate_AB}. Here $\omega_2$ is the natural frequency of the second bending mode; around $\omega_2\approx767.2$ the amplitude of the linear response reaches its a peak. More details about the linear analysis can be found in Part I. Due to the geometric nonlinearity and the internal resonance, the FRC computed using the SSM reduction is very different from that obtained from linear analysis for large response amplitudes. In particular, two HB points, denoted HB1 and HB2, are detected in the continuation of equilibria in the leading-order reduced dynamics~\eqref{eq:ode-reduced-slow-polar-leading}, as seen in Fig.~\ref{fig:FRC_vonKarmanPlate_AB}. They correspond to torus bifurcation periodic orbits of the full system. In contrast, no bifurcation arises in the linear response.
\end{sloppypar}

\begin{figure}[!ht]
\centering
\includegraphics[width=0.45\textwidth]{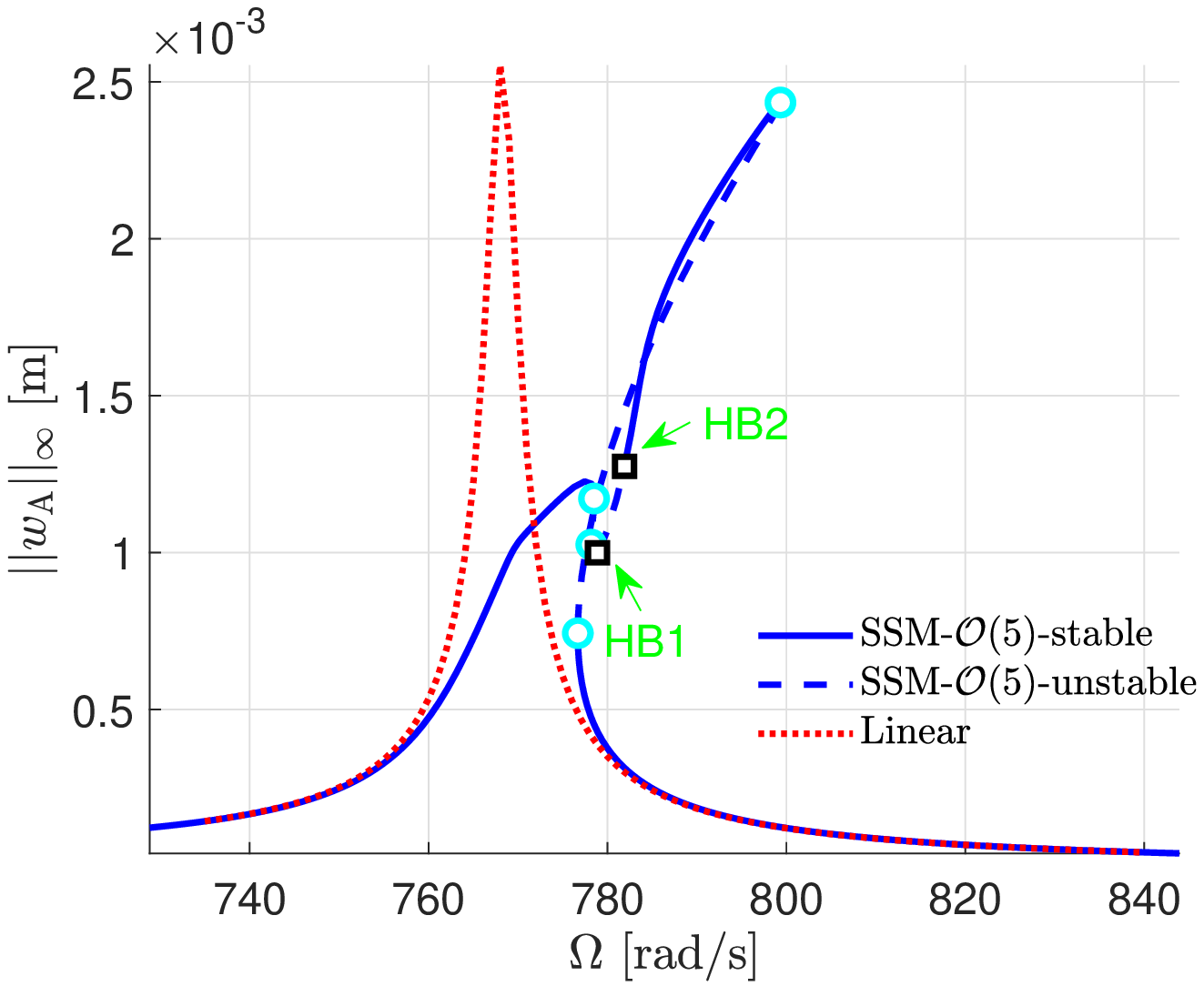}
\includegraphics[width=0.45\textwidth]{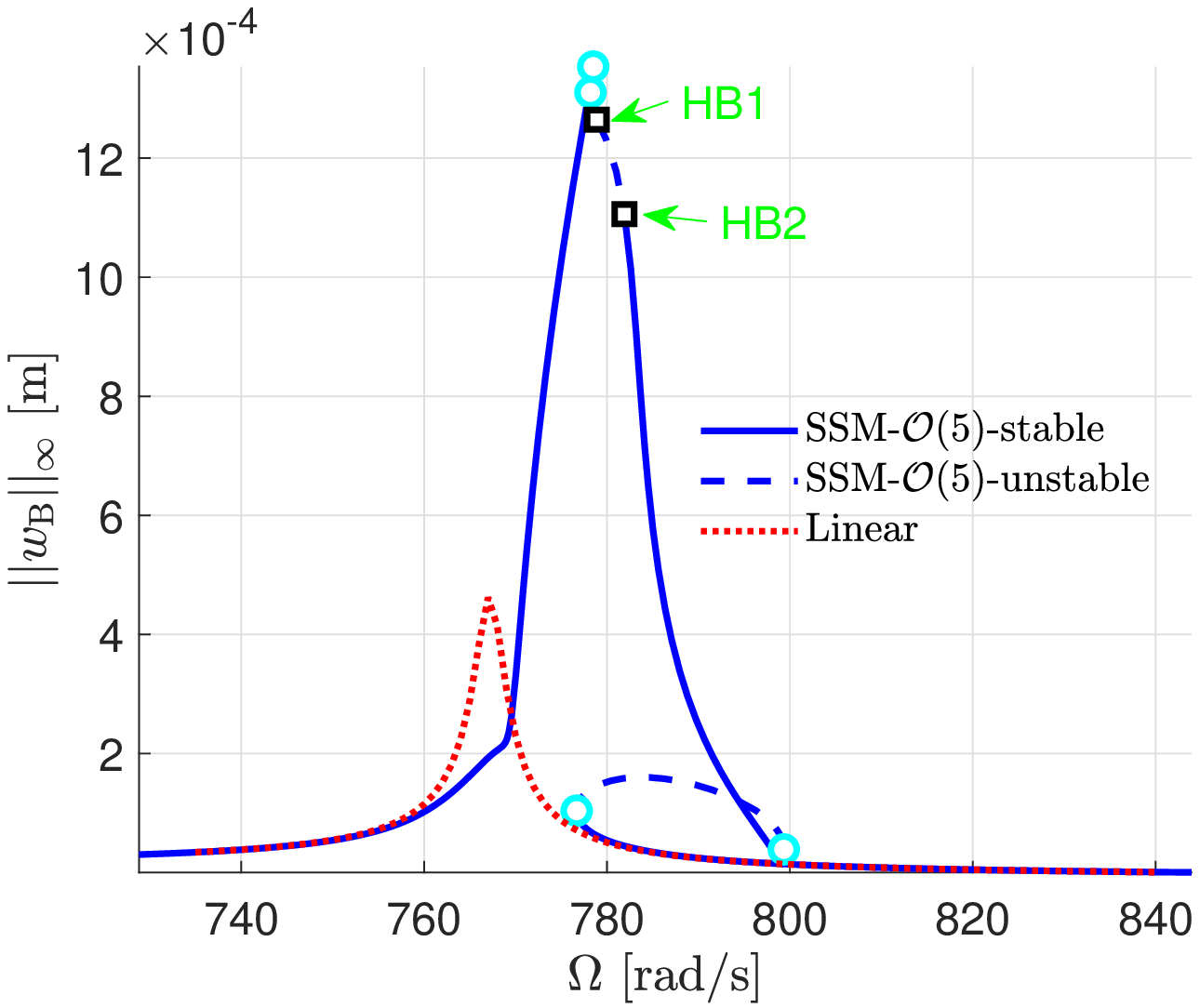}
\caption{FRCs in physical coordinates (the amplitudes of the periodic responses for the transverse displacement $w$ at the points A and B) for the von K\'arm\'an plate discretized with 2,406 DOF. The FRCs are already converged at $\mathcal{O}(5)$ expansion for the SSM. Here the red dotted lines are the results of linear analysis.}
\label{fig:FRC_vonKarmanPlate_AB}
\end{figure}

Next, we switch from the continuation of fixed points in the SSM-reduced model to the continuation of limit cycles at the HB2 point. This continuation proceeds until it reaches the HB1 point. In this run, we have found both stable and unstable periodic orbits. As seen in Fig.~\ref{fig:vonKarmanPlate_TandRho}, the limit cycles bifurcating from the two HB points are stable initially but become unstable via period-doubling bifurcations. In addition, the upper panel shows that $T_\mathrm{s}\approx2$ and then $\pi\approx\omega_s\ll\Omega\approx800$, highlighting the significant difference between these two time scales.

\begin{figure}[!ht]
\centering
\includegraphics[width=0.45\textwidth]{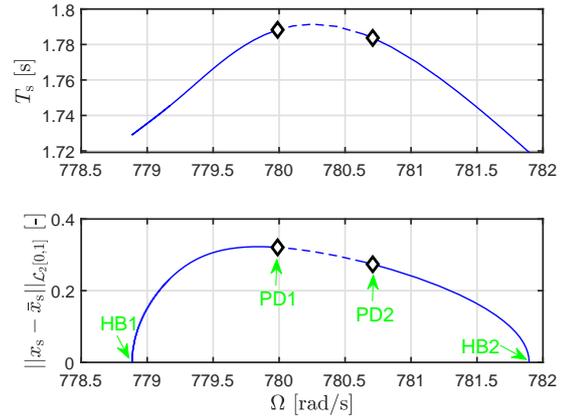}
\caption{Projections of the continuation path of the limit cycles in the SSM-reduced model of the 1:1 resonant discrete von K\'am\'an plate model. The upper and lower panels present the period and the size of the limit cycles as functions of $\Omega$, respectively. The diamonds correspond to period-doubling (PD) bifurcations of periodic orbits. The limit cycles here are mapped to the two-dimensional invariant tori of the full system with two frequency componnets, $\Omega$ and $\omega_\mathrm{s}=2\pi/T_\mathrm{s}$.}
\label{fig:vonKarmanPlate_TandRho}
\end{figure}

\begin{sloppypar}
The limit cycles shown in Fig.~\ref{fig:vonKarmanPlate_TandRho} signal the two-dimensional invariant tori of the full system. The FRCs of quasi-periodic orbits contained in these tori are presented in Fig.~\ref{fig:vonKarmanPlate_tori}. The solution branch of periodic orbits and the branch of quasi-periodic orbits intersect at the two torus bifurcation periodic orbits, which are detected as HB equilibria in the SSM-reduced model. Recall that the stable tori become unstable through period-doubling bifurcations (see~Fig.~\ref{fig:vonKarmanPlate_TandRho}). At such a PD bifurcation, a unique quasi-periodic orbit whose internal frequency is half of the internal frequency of the bifurcating torus is generated. Further, a secondary branch of quasi-periodic orbits appears under variation of $\Omega$. The quasi-periodic orbits on this secondary branch may be stable and a perturbed unstable torus may converge to the new family of tori. We leave a detailed study of this phenomenon to future research.
\end{sloppypar}

\begin{figure}[!ht]
\centering
\includegraphics[width=0.45\textwidth]{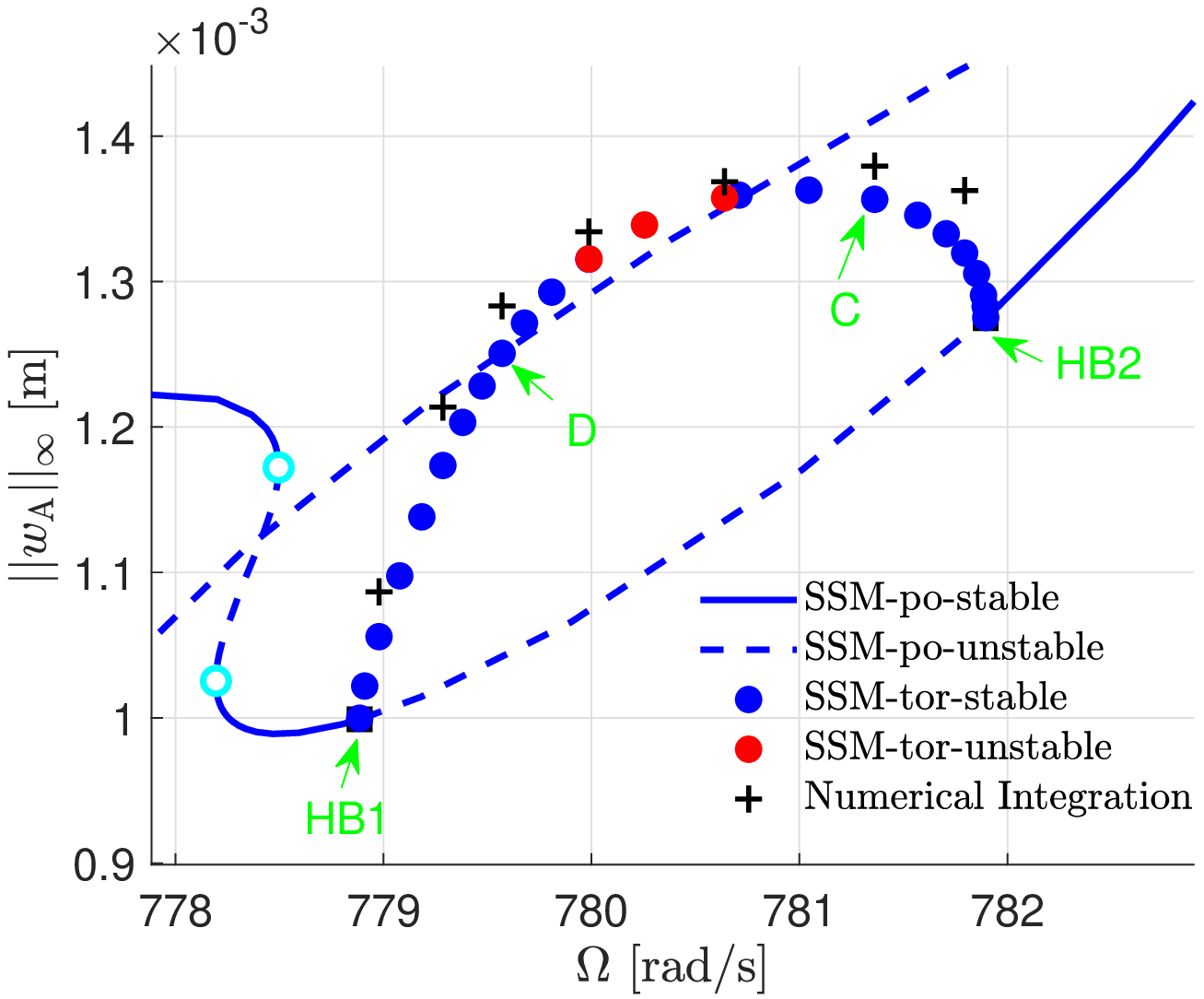}
\includegraphics[width=0.45\textwidth]{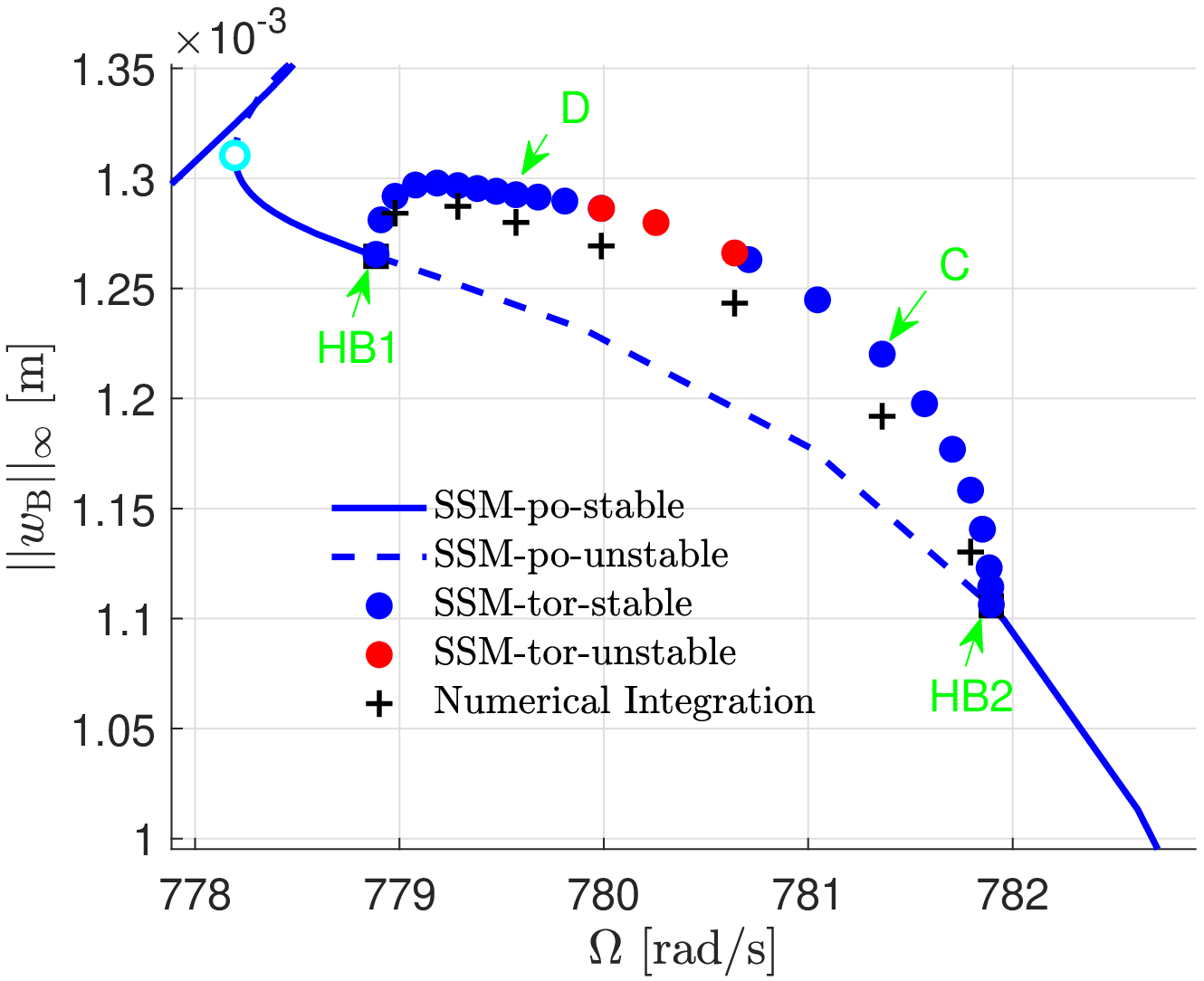}
\caption{FRCs in physical coordinates (the amplitudes of the periodic and quasi-periodic responses for the transverse displacement $w$ at the points A and B) for the von K\'arm\'an plate discretized with 2,406 DOFs.}
\label{fig:vonKarmanPlate_tori}
\end{figure}

\begin{sloppypar}
To validate the invariant tori obtained from SSM reduction, we consider the two methods used in our previous two examples: a convergence study of the SSM and a direct numerical integration of the full systems. For the convergence study, we focus on the two quasi-periodic PD bifurcation tori shown in~Fig.~\ref{fig:vonKarmanPlate_TandRho}. As seen in Fig.~\ref{fig:vonKarmanPlate_convg}, the two frequencies of each torus have converged when the expansion the SSM is $\mathcal{O}(7)$ or higher. In addition, the results at $\mathcal{O}(5)$ are very close to the convergent results while the results at $\mathcal{O}(3)$ and $\mathcal{O}(4)$ are not yet accurate. 

We also performed numerical integration of the full system, just as we did in the previous example. As seen in Fig.~\ref{fig:vonKarmanPlate_tori}, the results from SSM reduction match well with numerical integration. Detailed plots of the tori C and D are shown in Fig.~\ref{fig:vonKarmanPlate_tori_CD}, where we can see how the trajectories of the numerical integration stay close to the tori predicted from SSM reduction. Here we did not mark the complete trajectories with lines because they are too dense due to the long-time integration. Instead, we plot dots along the trajectories with 50 dots per cycle.
\end{sloppypar}

\begin{figure}[!ht]
\centering
\includegraphics[width=0.40\textwidth]{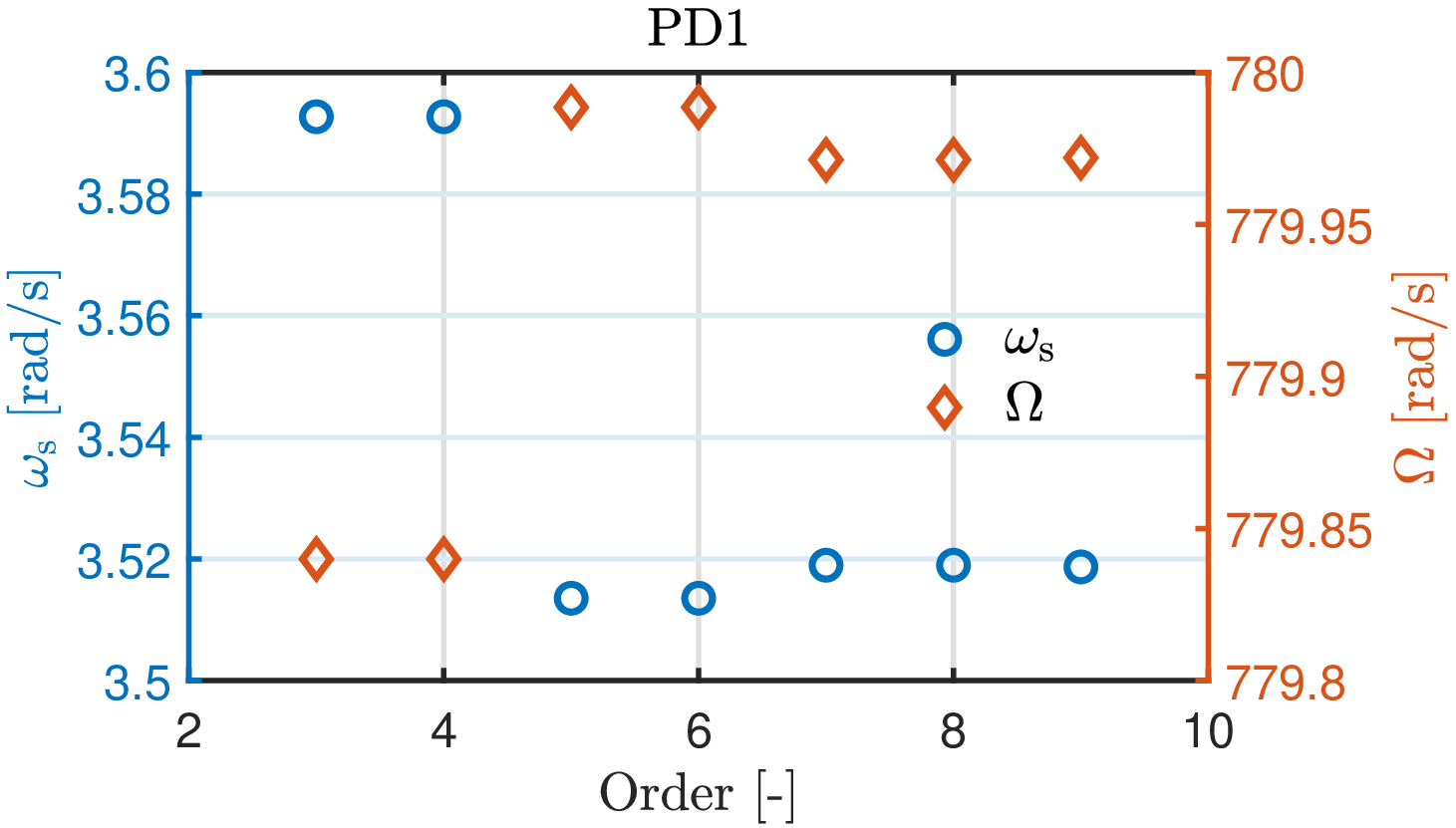}
\includegraphics[width=0.40\textwidth]{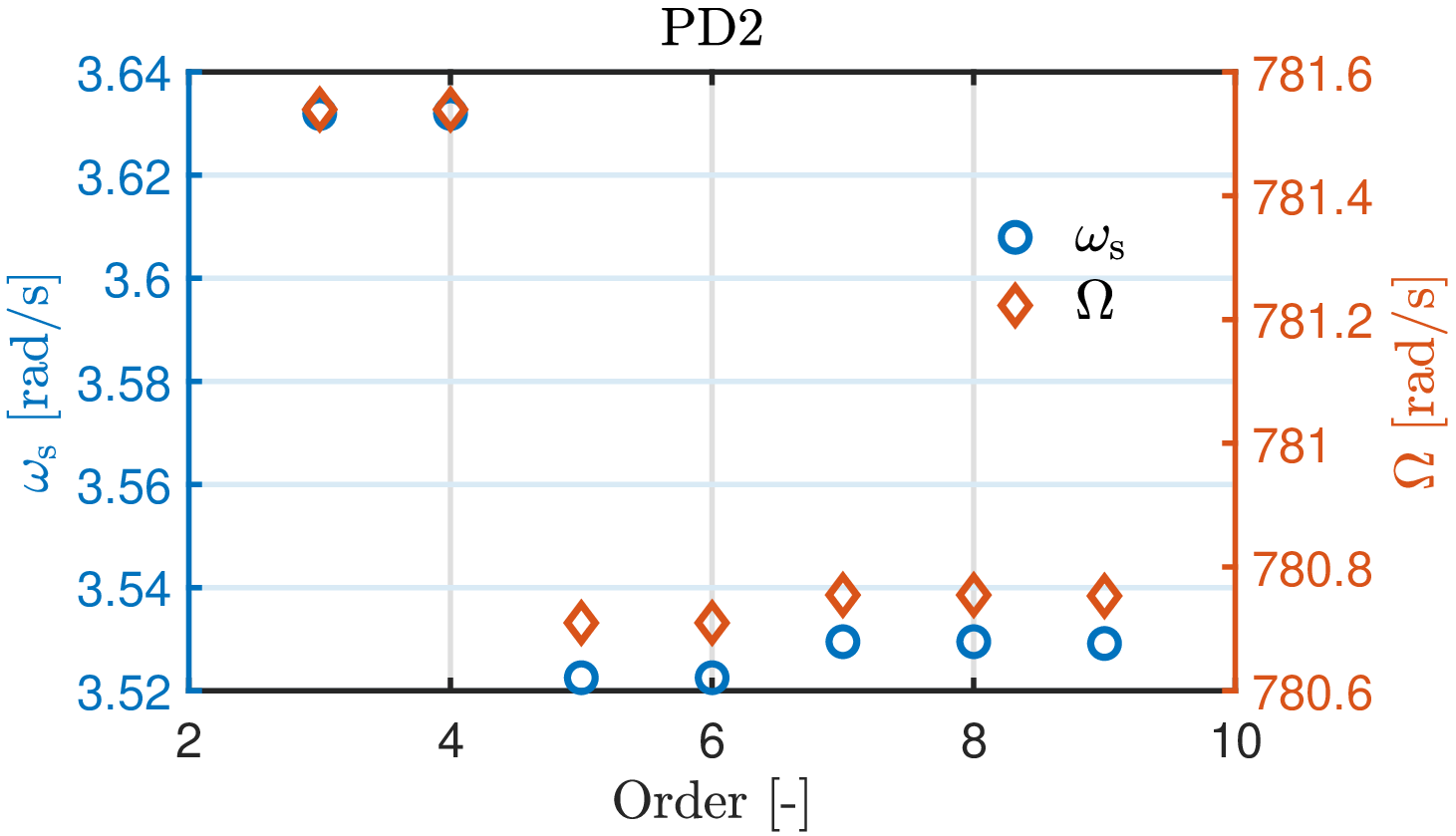}
\caption{Convergence of the two frequencies of the two bifurcating invariant tori shown in Fig.~\ref{fig:vonKarmanPlate_TandRho} with increasing expansion order for the SSM.}
\label{fig:vonKarmanPlate_convg}
\end{figure}

\begin{figure}[!ht]
\centering
\includegraphics[width=0.45\textwidth]{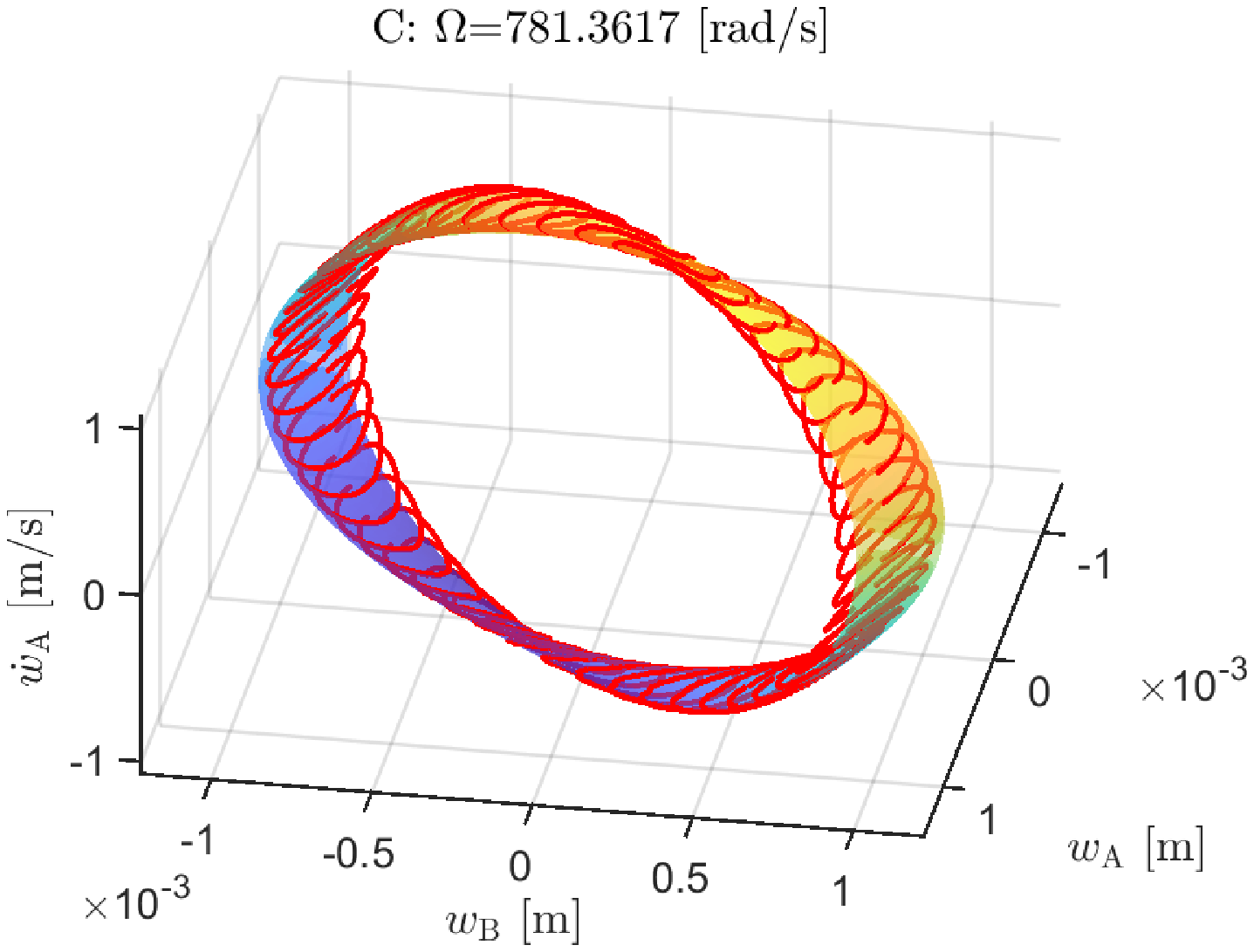}
\includegraphics[width=0.45\textwidth]{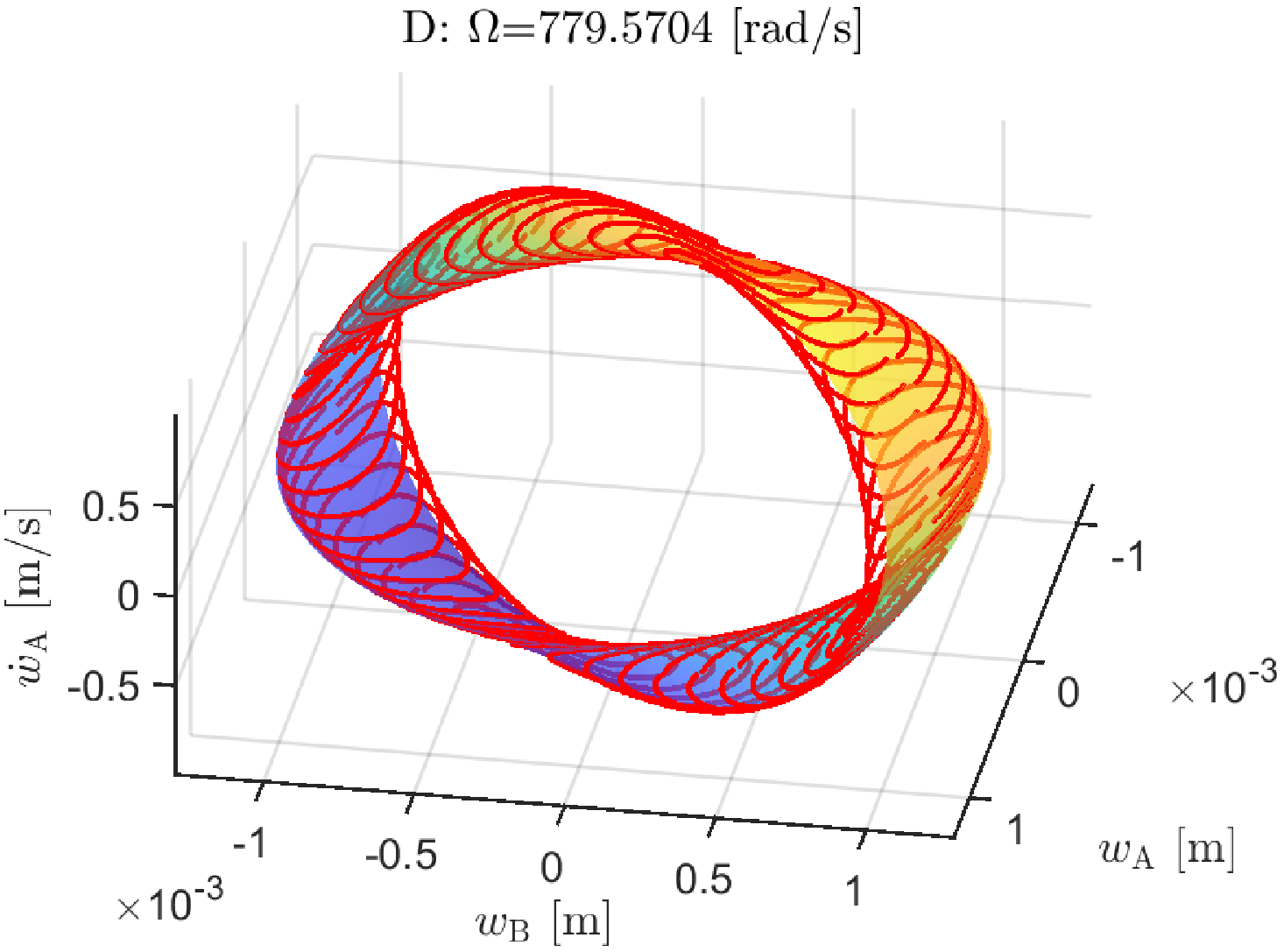}
\caption{Invariant tori of the discretized square von K\'am\'an plate obtained from SSM reduction (surface plot) and the trajectories obtained by the numerical integration of the full system with initial states on the tori (red dots). Upper panel: torus C. Lower panel: torus D; see Fig.~\ref{fig:vonKarmanPlate_tori}.}
\label{fig:vonKarmanPlate_tori_CD}
\end{figure}

\section{Conclusion}
\label{sec:conclusion}
We have shown how various bifurcations of periodic and quasi-periodic response in very high-dimensional forced-damped mechanical systems with an internal resonance can be efficiently and accurately predicted from reduced-order models (ROMs) obtained on spectral submanifolds (SSMs) of these systems. A symmetry in the leading-order dynamics on resonant SSMs enables the identification of invariant 2-tori and 3-tori of the full system as limit cycle and invariant 2-tori of the truncated ROM on the SSM. This enables the detection of limit cycle and torus bifurcations in complex mechanical systems in previously unreachable dimensions and previously unattainable speeds.

To automate the detection of limit cycle and torus bifurcations in resonant SSMs, we have developed the open-source \textsc{matlab} toolboxes, which are available as parts of SSMTool-2.2~\cite{ssmtool2}. The \texttt{SSM-ep} and \texttt{SSM-po} are based on the \texttt{ep} and \texttt{po} toolboxes of \textsc{coco}, respectively, which support stability and bifurcation analysis. The \texttt{SSM-tor} toolbox is based on a recently developed \textsc{coco} toolbox, \texttt{Tor}, which does not yet provide stability and bifurcation analysis of two-dimensional invariant tori. Further enhancements of this toolbox is therefore highly desirable.

We have considered four mechanical examples to illustrate our results. In the first example, the quasi-periodic responses of two coupled oscillators with 1:2 internal resonance were calculated using both the SSM and the semi-discretization method. The results obtained by the two methods match closely. In the second example, both two- and three- dimensional invariant tori of a discrete Bernoulli beam with nonlinear support spring were investigated in detail. The convergence of the results under increasing expansion order for the SSM has been verified. In the last two examples, we considered finite element models of von K\'arm\'an beam and plate structures with distributed nonlinearity. Numerical integration has been performed to validate the results obtained from SSM reduction.

\begin{sloppypar}
Both in Part I and in the present paper, we have assumed harmonic external forcing in our setup. A mechanical system, however, also displays quasi-periodic response under quasi-periodic external forcing. The existence of SSMs for such systems has also been discussed in details in~\cite{haller2016nonlinear}. Based on these results, one can derive reduced-order models for such systems using SSM theory, as already indicated by~\cite{SHOBHIT}. As an extension, one may, therefore, generalize present study to systems with internal resonance and quasi-periodic external forcing.
\end{sloppypar}

\section{Appendix}

\subsection{Proof of Theorem~\ref{theo-po}}
\label{sec:appendix-theo1}
\begin{sloppypar}
The statements (i-iii) are simply restatements of Theorems I and II of Part I. For this reason, we only discuss the proofs of statements (iv-v) here. Following the proofs in Part I, we first derive a slow-fast dynamical system for the SSM dynamics. We then use the method of averaging~\cite{guckenheimer2013nonlinear} to prove the two statements on local bifurcations.

Similarly to Part I, we focus on the reduced-order model in Cartesian coordinates (cf.~\eqref{eq:ode-reduced-slow-cartesian} and~\eqref{eq:ode-reduced-slow-cartesian-leading}) to complete the proof of (iv) and (v) because these two statements are not affected by the choice of coordinates.
Let $\mathbf{x}=(q_{1,\mathrm{s}}^{\mathrm{R}},q_{1,\mathrm{s}}^{\mathrm{I}},\cdots,q_{m,\mathrm{s}}^{\mathrm{R}},q_{m,\mathrm{s}}^{\mathrm{I}})$. Equation~\eqref{eq:ode-reduced-slow-cartesian} can then be rewritten as
\begin{equation}
\label{eq:ode-qc}
    \dot{\mathbf{x}}=\mathbf{A}\mathbf{x}+\mathbf{F}(\mathbf{x})+\epsilon \mathbf{F}^\mathrm{ext}+\mathcal{O}(\epsilon|\mathbf{x}|)\mathbf{G}(\phi),\quad \dot{\phi}=\Omega,
\end{equation}
where $\mathbf{A}$ is an invertible, block-diagonal matrix, $\mathbf{F}$ contains the nonlinear terms in the reduced-order model~\eqref{eq:ode-reduced-slow-cartesian-leading}, $\mathbf{F}^\mathrm{ext}$ is a constant vector and $\mathbf{G}(\phi)$ is a periodic function. Detailed expressions for these terms can be found in Appendix 8.3 of Part I. Introducing the transformation
\begin{equation}
\label{eq:xtoxhat}
    \mathbf{x}=\mu\hat{\mathbf{x}},\quad \mu=\epsilon^{1-q},\quad q=1-1/k,
\end{equation}
where $k$ is the lowest order of nonlinearity in $\mathbf{F}(\mathbf{x})$, we obtain the governing equation for $\hat{\mathbf{x}}$ from~\eqref{eq:ode-qc} in the form
\begin{equation}
    \dot{\hat{\mathbf{x}}} = \epsilon^q\mathcal{F}(\hat{\mathbf{x}},\mu,\Omega)+\mathcal{O}(\epsilon |\hat{\mathbf{x}}|)\mathbf{G}(\phi),\quad \dot{\phi}=\Omega.
\end{equation}
Here $\mathcal{F}$ can be found in eq. (117) of Part I (we include $\Omega$ explicitly as an argument here for bifurcation analysis). For $\nu=\epsilon^q$, the equations above can be rewritten as
\begin{equation}
\label{eq:full-flow}
    \dot{\hat{\mathbf{x}}} = \nu\mathcal{F}(\hat{\mathbf{x}},\mu,\Omega)+\nu\mu\mathcal{O}( |\hat{\mathbf{x}}|)\mathbf{G}(\phi),\quad \dot{\phi}=\Omega.
\end{equation}

To apply the method of averaging~\cite{guckenheimer2013nonlinear}, we also consider two other flows as follows
\begin{gather}
    \dot{\hat{\mathbf{x}}} = \nu \mathcal{F}(\hat{\mathbf{x}},\mu,\Omega),\quad\dot{\phi}=\Omega,\label{eq:leading-flow}\\
    \dot{\hat{\mathbf{x}}} = \nu \mathcal{F}(\hat{\mathbf{x}},0,\Omega),\quad \dot{\phi}=\Omega,\label{eq:zero-flow}
\end{gather}
where~\eqref{eq:leading-flow} is the truncation of~\eqref{eq:full-flow}, and both the flow~\eqref{eq:full-flow} and its truncation~\eqref{eq:leading-flow} are reduced to~\eqref{eq:zero-flow} in the limit of $\mu=0$.
Let $T=2\pi/(r_\mathrm{d}\Omega)$, where $r_\mathrm{d}$ is defined as the largest common divisor for the set of rational numbers $\{r_i\}_{i=1}^m$ (see~\eqref{eq:res-forcing}). We define the period-$T$ maps of the three flows~\eqref{eq:full-flow},~\eqref{eq:leading-flow} and~\eqref{eq:zero-flow} as $\mathbf{P}_1$, $\mathbf{P}_2$ and $\mathbf{P}_0$, respectively. Furthermore, we define $\mathbf{H}_i$ as the scaled zero functions associated with the fixed points of the Poincar\'e maps $\mathbf{P}_{i}$:
\begin{equation}
    \mathbf{H}_i(\hat{\mathbf{x}},\Omega)=\frac{1}{\nu}(\mathbf{P}_{i}\hat{\mathbf{x}}-\hat{\mathbf{x}}),\quad i=0,1,2.
\end{equation}
Suppose that the Poincar\'e map $\mathbf{P}_2$ of the truncated flow, viz, $\mathbf{P}_2$ has a local bifurcation. Our goal is to show that this bifurcation is persistent in the map $\mathbf{P}_1$. We will follow the line of thought $\mathbf{P}_2/\mathbf{H}_2\to\mathbf{P}_0/\mathbf{H}_0\to\mathbf{P}_1/\mathbf{H}_1$ to reach this goal.

The proof of statement (iv) here is adapted from~\cite{guckenheimer2013nonlinear}. Assume that the truncated reduced-order model~\eqref{eq:ode-reduced-slow-cartesian-leading} undergoes a saddle-node bifurcation at $\Omega=\Omega_0$. Then the averaged system~\eqref{eq:leading-flow} has a pair of fixed points $\hat{\mathbf{x}}_{+}$, $\hat{\mathbf{x}}_{-}$ which coalesce at $\Omega_0$ in a smooth arc in the $(\hat{\mathbf{x}},\Omega)$ space. This pair of fixed points are the zeros of $\mathbf{H}_2$. There exists a local change of coordinates near the points $(\hat{\mathbf{x}}_{\pm}(\Omega_0),\Omega_0)\in\mathbb{R}^{2m}\times\mathbb{R}$ under which points in these branches can be put into the form
$\hat{\mathbf{x}}_{\pm}(\Omega)=(\pm c\sqrt{\Omega_0-\Omega},\mathbf{0})\in\mathbb{R}\times\mathbb{R}^{2m-1}$ (see~\cite{guckenheimer2013nonlinear}; we have assumed that the fixed points disappear when $\Omega>\Omega_0$ without loss of generality). Since $\mathbf{H}_0$ is $\mu$-close to $\mathbf{H}_2$, $\mathbf{H}_0$ has a pair of branches of zeros $\hat{\mathbf{y}}_{\pm}(\Omega)=(\pm c\sqrt{\Omega_0-\Omega}+\mathcal{O}(\mu),\mathcal{O}(\mu))$ in the same coordinates. Likewise, since $\mathbf{H}_1$ is $\mu$-close to $\mathbf{H}_0$, $\mathbf{H}_1$ has a pair of branches of zeros $\hat{\mathbf{z}}_{\pm}(\Omega)=(\pm c\sqrt{\Omega_0-\Omega}+\mathcal{O}(\mu),\mathcal{O}(\mu))$. Since the Jacobian matrices $\partial_{\hat{\mathbf{x}}}\mathbf{P}_2(\hat{\mathbf{x}}_{\pm}(\Omega))$, $\partial_{\hat{\mathbf{x}}}\mathbf{P}_0(\hat{\mathbf{y}}_{\pm}(\Omega))$ and $\partial_{\hat{\mathbf{x}}}\mathbf{P}_1(\hat{\mathbf{z}}_{\pm}(\Omega))$ are also close, the linearized stability of the branches is the same for these systems.

We also adapt the proof from~\cite{guckenheimer2013nonlinear} to prove statement (v). Assume that the reduced-order model~\eqref{eq:ode-reduced-slow-cartesian-leading} undergoes a Hopf bifurcation at $\Omega=\Omega_0$. The map $\mathbf{P}_2$ then has a locally unique curve of fixed points $\hat{\mathbf{x}}(\Omega)$ and the spectrum of $\partial_{\hat{\mathbf{x}}}\mathbf{P}_2(\hat{\mathbf{x}}(\Omega))$ does not contain 1 for $\Omega\approx\Omega_0$. Since $\mathbf{H}_0$ is $\mu$-close to $\mathbf{H}_2$, a nearby curve of fixed points $\hat{\mathbf{y}}(\Omega)=\hat{\mathbf{x}}(\Omega)+\mathcal{O}(\mu)$ exists for $\mathbf{P}_0$, and a pair of complex eigenvalues must pass through the unit circle for $\mathbf{P}_0$ near $\Omega=\Omega_0$. Likewise, since $\mathbf{H}_1$ is $\mu$-close to $\mathbf{H}_0$, a nearby curve of fixed points $\hat{\mathbf{z}}(\Omega)=\hat{\mathbf{y}}(\Omega)+\mathcal{O}(\mu)$ exists for $\mathbf{P}_1$, and a pair of complex eigenvalues must pass through the unit circle for $\mathbf{P}_1$ near $\Omega=\Omega_0$. It remains only to check the persistence of the absence of strong resonance condition in Neimark-Sacker bifurcation, which is easily satisfied because the three Jacobian matrices of the Poincar\'{e} maps are close and $\nu\ll1$.
\end{sloppypar}

\subsection{Proof of Theorem~\ref{theo-3}}
\label{sec:appendix-theo3-proof}
\begin{sloppypar}
Let $\boldsymbol{v}(t)$ be a periodic orbit of the reduced-order model~\eqref{eq:ode-reduced-slow-polar-leading} or~\eqref{eq:ode-reduced-slow-polar-leading} with period $T_\mathrm{s}$. Here $\boldsymbol{v}=(\boldsymbol{\rho},\boldsymbol{\theta})$ or $\boldsymbol{v}=(\boldsymbol{q}_\mathrm{s}^{\mathrm{R}},\boldsymbol{q}_\mathrm{s}^{\mathrm{I}})$, depending on the choice of coordinates. Recall that $\boldsymbol{p}(t)$ is the corresponding solution to the truncated reduced dynamics~\eqref{eq:red-dyn-leading}. Consider a cross section
\begin{equation}
\Sigma = \{(\boldsymbol{p},t)\in\mathbb{C}^l\times\mathbb{R}:\mathrm{mod}(t,T_\mathrm{s})=0\},
\end{equation}
and the Poincar\'e map $P_{\Sigma}:\Sigma\to\Sigma$
\begin{equation}
 (\boldsymbol{p}(kT_\mathrm{s}),kT_\mathrm{s})\mapsto(\boldsymbol{p}((k+1)T_\mathrm{s}),(k+1)T_\mathrm{s}), \quad k\in\mathbb{N}.
\end{equation}

We first prove the statement (i). If $\varrho=\frac{T}{T_\mathrm{s}}\in\mathbb{Q}$, there exists some $m_\mathrm{p}\in\mathbb{N}$ such that
\begin{gather}
(\boldsymbol{p}(k_\mathrm{p}T),kT)\mapsto (\boldsymbol{p}(k_\mathrm{p}T),(k+m_\mathrm{p})T),\nonumber\\
\forall\, k\in\mathbb{N},\,\,k_\mathrm{p}=1,\cdots,m_\mathrm{p}.
\end{gather}
The equation above indicates that there are $m_\mathrm{p}$ fixed points of the period-$m_\mathrm{p}T$ map. It follows that $\boldsymbol{p}(t)$ is periodic with period $m_\mathrm{p}T$. Now we give an explicit expression for $m_\mathrm{p}$. Given $\varrho\in\mathbb{Q}$, there exists unique two co-prime integers $m_{1,2}$ such that $\varrho=m_1/m_2$. Then we have $m_2T=m_1T_\mathrm{s}$ and we can choose $m_\mathrm{p}=m_2$.

If $\varrho\not\in\mathbb{Q}$, we deduce from~\eqref{eq:polar-form} and~\eqref{eq:cartesian-form} that the periodic orbit $\boldsymbol{v}(t)$ in the phase space coincides with the invariant closed curve of the Poincar\'e map and the solution $\boldsymbol{p}(t)$ is a quasi-periodic orbit. The two frequencies of this torus are $r_\mathrm{d}\Omega$ and $\omega_\mathrm{s}$, where $\omega_\mathrm{s}=2\pi/T_\mathrm{s}$.

We now move to the proof of the statement (ii). If $\varrho\in\mathbb{Q}$, the $m_\mathrm{p}$ fixed points of the Poincar\'e map have the same stability type as the periodic orbit $\boldsymbol{v}(t)$, then the stability type of the periodic orbit $\boldsymbol{p}(t)$ is the same as that of $\boldsymbol{v}(t)$. If $\varrho\not\in\mathbb{Q}$, the closed invariant intersection of the Poincar\'e map has the same stability type as the periodic orbit $\boldsymbol{v}(t)$. Then the stability type of the quasi-periodic orbit $\boldsymbol{p}(t)$ is the same as that of the $\boldsymbol{v}(t)$. The analysis based on fixed point algorithm~\cite{kim1996quasi} yields the same results.

Statement (iii) follows directly from the definition of quasi-periodic SN/PD/HB bifurcations of tori~\cite{kim1996quasi,vitolo2011quasi}.
\end{sloppypar}

\subsection{Event functions for bifurcations of periodic orbits}
\label{sec:event-bif-po}
Let $\{\hat{\lambda}_i\}_{i=1}^n$ be the eigenvalues of the monodromy matrix of a periodic orbit $\boldsymbol{u}(t)$ in an $n$-dimensional system. These eigenvalues are generally referred to as Floquet multipliers. If the system is autonomous, there always exists a Floquet multiplier equal to one. This multiplier is constant and hence is unrelated to bifurcations. Without loss of generality, therefore, we set $\hat{\lambda}_n\equiv1$ if the system is autonomous. One can detect bifurcation periodic orbits based on the evolution of the Floquet multipliers. Specifically
\begin{enumerate}[label=(\roman*)]
\item \emph{Saddle-node} (SN) bifurcation: a single Floquet multiplier crossing the unit
circle in the complex plane through the real number 1,
\item \emph{period-doubling} (PD) bifurcation: a single Floquet multiplier crossing the unit
circle in the complex plane through the real number $-1$,
\item \emph{Torus} (TR) bifurcation: a pair of complex conjugate Floquet multiplier crossing the unit circle in the complex plane at points away from a strong resonance.
\end{enumerate}
Event functions for detecting these bifurcation periodic orbits can be defined as~\cite{dankowicz2013recipes}: $\psi_\mathrm{SN}:\boldsymbol{u}\mapsto \prod_{i=1}^N(\hat{\lambda}_i-1)$, $\psi_\mathrm{PD}:\boldsymbol{u}\mapsto \prod_{i=1}^N(\hat{\lambda}_i+1)$ and
\begin{equation}
\psi_\mathrm{TR}:\boldsymbol{u}\mapsto\left\lbrace \begin{array}{cl}
      1   & N=1 \\
      \prod_{i=1}^N\prod_{j=1}^{i-1}(\hat{\lambda}_i\hat{\lambda}_j-1)   & N>1
    \end{array} \right.,
\end{equation}
where $N=n-1$ for autonomous systems and $N=n$ for non-autonomous systems. These event functions are utilized in the \texttt{po}-toolbox in \textsc{coco}.

In high-dimensional systems, such as our discrete beam and plate examples, one has $\psi_\mathrm{SN}\sim0$ and $\psi_\mathrm{TR}\sim0$ on the whole solution branch, which results in the failure of detecting these two types of bifurcation periodic orbits with the \texttt{po}-toolbox. An intuitive explanation is that most Floquet multipliers are inside the unit circle of the complex plane, resulting in $|\hat{\lambda}_i-1|<1$ and $|\hat{\lambda}_i\hat{\lambda}_j-1|<1$ in most cases. A trick to tackle this issue is selecting a subset of Floquet multipliers instead of all of them (see the stability analysis of periodic orbits for systems with delay and hence infinite many Floqute multipliers~\cite{ddebiftoolmanual,Kunt}). Specifically, we can choose the $n_\mathrm{b}$ eigenvalues which are closest to the unit circle of the complex plane. We have used this trick to detect TR bifurcations of periodic orbits in the von K\'arm\'an beam example.

\subsection{From periodic orbits to two-dimensional tori}
\label{sec:app-po2tor}
\begin{sloppypar}
Let $\boldsymbol{w}(t)$ be a periodic orbit in the leading-order SSM-reduced model~\eqref{eq:ode-reduced-slow-polar-leading} or~\eqref{eq:ode-reduced-slow-cartesian-leading}. Specifically, we have $\boldsymbol{w}=(\boldsymbol{\rho},\boldsymbol{\theta})$ for polar coordinates and $\boldsymbol{w}=(\boldsymbol{q}_\mathrm{s}^{\mathrm{R}},\boldsymbol{q}_\mathrm{s}^{\mathrm{I}})$ for Cartesian coordinates. Let $\Gamma$ be the closed curve corresponding to $\boldsymbol{w}(t)$ in the phase space of the reduced-order model. Defining $\boldsymbol{w}_\sigma(t) = \boldsymbol{w}(t+\sigma)$, we obtain that $\boldsymbol{w}_\sigma(t)$ is also a periodic orbit solution to the reduced-order model with the same period for all $\sigma\in\mathbb{R}$. This claim holds because the vector field of the reduced-order model is \emph{autonomous}. In addition, the closed curve corresponding to $\boldsymbol{w}_\sigma(t)$ is exactly $\Gamma$, independently of $\sigma$. We can represent $\Gamma$ with $\boldsymbol{w}_\sigma(0)$ with $\sigma\in[0,T_\mathrm{s}]$, where $T_\mathrm{s}$ is the period of $\boldsymbol{w}(t)$.

Let $\boldsymbol{p}_\sigma(t)$ be the solution of the reduced dynamics~\eqref{eq:red-dyn-leading} corresponding to $\boldsymbol{w}_\sigma(t)$. To be consistent with~\eqref{eq:p-to-qs}, we have
\begin{equation}
\label{eq:psigma-to-qsigma}
\boldsymbol{p}_\sigma(t)=(q_{1,\sigma}(t),\bar{q}_{1,\sigma}(t),\cdots,q_{m,\sigma}(t),\bar{q}_{m,\sigma}(t)),
\end{equation}
where
\begin{align}
\label{eq:qsigma}
& q_{i,\sigma}(t)=\rho_{i}(t+\sigma)e^{\mathrm{i}(\theta_{i}(t+\sigma)+r_i\Omega t)} \quad\text{or}\nonumber\\ & q_{i,\sigma}(t)=(q_{i,\mathrm{s}}^\mathrm{R}(t+\sigma)+\mathrm{i}q_{i,\mathrm{s}}^\mathrm{I}(t+\sigma))e^{\mathrm{i}r_i\Omega t},
\end{align}
depending on the choice of coordinate representation (cf.~\eqref{eq:polar-form} and~\eqref{eq:cartesian-form}). We define $\mathcal{R}(\boldsymbol{p}_\sigma(t))$ as
\begin{gather}
\mathcal{R}(\boldsymbol{p}_\sigma(t))=\begin{pmatrix}|q_{1,\sigma}(t)|\\\vdots\\|q_{m,\sigma}(t)|\\\arg(q_{1,\sigma}(t))\\\vdots\\\arg(q_{m,\sigma}(t))\end{pmatrix}\,\, \text{or}\,\,\begin{pmatrix}\mathrm{Re}(q_{1,\sigma}(t))\\\vdots\\\mathrm{Re}(q_{m,\sigma}(t))\\\mathrm{Im}(q_{1,\sigma}(t))\\\vdots\\\mathrm{Im}(q_{m,\sigma}(t))\end{pmatrix},
\end{gather}
depending on the coordinate representation for $\boldsymbol{w}$ is \emph{polar} or \emph{Cartesian}. One can directly verify that $\mathcal{R}(\boldsymbol{p}_\sigma(kT))\in\Gamma$ for all $\sigma\in[0,T_\mathrm{s}]$ and $k\in\mathbb{Z}$, where $T$ has been defined in Theorem~\ref{theo-po}. This suggests that $\Gamma$ is the closed invariant intersection of the period-$T$ section with $\mathcal{R}(\boldsymbol{p}_\sigma(t))$, independently of $\sigma$. Therefore, we can approximate the two-dimensional invariant torus with a collection of trajectories $\{\boldsymbol{p}_{\sigma_i}(t)\}_{i=1}^{N}$, $t\in[0,T]$. Specifically, we can choose a set of shifts $\{\sigma_i\}_{i=1}^N$ with $\sigma_i\in[0,T_\mathrm{s}]$ for $i=1,\cdots N$, such that $\Gamma$ is well approximated by the set of points $\{\boldsymbol{w}_{\sigma_i}(0)\}_{i=1}^N$. Each $\boldsymbol{p}_{\sigma_i}(t)$ stays on the torus and the set of terminal points after one period, namely, $\{\mathcal{R}(\boldsymbol{p}_{\sigma_i}(T))\}_{i=1}^N$ or equivalently, $\{\boldsymbol{w}_{\sigma_i}(T)\}_{i=1}^N$, also provides a good approximation to $\Gamma$.

In practical computations, the \texttt{po}-toolbox of \textsc{coco} provides a discrete approximation of $\boldsymbol{w}(t)$: $\{\boldsymbol{w}(t_i)\}_{i=1}^N$ with $t_1=0$, $t_i\in(0,T_\mathrm{s})$ for $i=2,\cdots,N-1$ and $t_N=T_\mathrm{s}$. It follows that we can select $\sigma_i=t_i$ for $i=1,\cdots,N$. For each $\boldsymbol{p}_{\sigma_i}(t)$ with $t\in[0,T]$, it can be discretized at a set of time points $\{\tau_j\}_{j=1}^{N_\mathrm{t}}$ with $\tau_j=(j-1)T/N_\mathrm{t}$ for $j=1,\cdots,N_\mathrm{t}$. The remaining question is to determine $\boldsymbol{p}_{\sigma_i}(\tau_j)$. It follows from~\eqref{eq:psigma-to-qsigma} and~\eqref{eq:qsigma} that we need to know $\boldsymbol{w}_{\sigma_i}(\tau_j)$. Given $\boldsymbol{w}_{\sigma_i}(\tau_j)=\boldsymbol{w}(\tau_j+\sigma_i)=\boldsymbol{w}(\tau_{ij})$ where $\tau_{ij}=\mathrm{mod}(\tau_j+\sigma_i,T_\mathrm{s})$, one can approximate $\boldsymbol{w}_{\sigma_i}(\tau_j)$ based on the \emph{interpolation} of $\{\boldsymbol{w}(t_i)\}_{i=1}^N$ with query point $\tau_{ij}$.
\end{sloppypar}

Now we are ready to map the approximated invariant torus in normal coordinates $\boldsymbol{p}$ to the invariant torus in physical coordinates $\boldsymbol{z}$. Specifically, we apply the map~\eqref{eq:ssm-decomp-leading} to each $\boldsymbol{p}_{\sigma_i}(t)$ to yield the corresponding trajectory in physical coordinates, which is referred to as $\boldsymbol{z}_{\sigma_i}(t)$ here. Then the collection of trajectories $\{\boldsymbol{z}_{\sigma_i}(t)\}_{i=1}^{N}$ with $t\in[0,T]$ can be used to approximate the invariant torus in physical coordinates.

\subsection{A brief introduction to the~\texttt{Tor} toolbox}
\label{sec:app-Tor}
This toolbox supports~\cite{li2020tor}:
\begin{itemize}
\item The switch from the continuation of periodic orbits to the continuation of invariant tori at TR bifurcation periodic orbits.
\item The continuation of invariant tori with fixed or free rotation number.
\item The continuation of invariant tori along the secondary branch passing through a branch point.
\end{itemize}
The toolbox does not provide the stability and bifurcation analysis of invariant tori in the current version.

In the \texttt{Tor} toolbox, an invariant torus is solved for as the solution to a partial differential equation (PDE) that involves two independent variables, corresponding to the two frequency components of the torus, and suitable phase conditions. Fourier expansion is used to discretize the unknown solution in one variable. The PDE is then discretized as a system of ordinary differential equations (ODEs), and the torus is obtained by solving a multisegment boundary-value problem (MBVP) with the system of ODEs and \emph{all-to-all} coupling boundary conditions. Let the number of harmonics in the Fourier expansion be $n_\mathrm{h}$, the number of segments is $2n_\mathrm{h}+1$. The MBVP is then solved with the \texttt{coll}-toolbox in \textsc{coco}. In other words, the collocation method is used to perform the discretization in the other independent variable. In the current release of \texttt{Tor}, $n_\mathrm{h}$ is specified by users and then fixed during continuation, while the mesh for the collocation can be adaptively changed in continuation due to the support of adaptability of the \texttt{coll}-toolbox.
\begin{sloppypar}
Let $\omega_{1,2}$ be the two frequency components of the invariant torus, and the corresponding phase angles be $\phi_{1,2}$ respectively. \texttt{Tor} utilizes  $2n_\mathrm{h}+1$ trajectories for approximating the torus. Let the collection of such trajectories be $\{\boldsymbol{u}_i(t)\}_{i=1}^{2n_\mathrm{h}+1}$ with $t\in[0,T_2]$, where $T_2=2\pi/\omega_2$. The initial points of these trajectories, $\{\boldsymbol{u}_i(0)\}_{i=1}^{2n_\mathrm{h}+1}$, are on a closed curve $C$, which is the invariant intersection of the period-$T_2$ map of the quasi-periodic orbit. In addition, the phase angle of $\boldsymbol{u}_i(0)$ on the curve $C$ is given by $\phi_{i,1}=(i-1)2\pi/(2n_\mathrm{h}+1)$ for $i=1,\cdots,2n_\mathrm{h}+1$. It follows that the final points of the set of trajectories, namely, $\{\boldsymbol{u}_i(T_2)\}_{i=1}^{2n_\mathrm{h}+1}$, are also on the closed curve $C$. The phase angles of these final points are obtained by applying a rigid rotation to the initial phase angles
\begin{equation}
\phi_{i,1}\mapsto\phi_{i,1}+\omega_1T_2=\phi_{i,1}+2\pi\varrho,\,\, i=1,\cdots,2n_\mathrm{h}+1,
\end{equation}
where $\varrho=\omega_1/\omega_2$. Therefore, the collection of trajectories $\{\boldsymbol{u}_i(t)\}_{i=1}^{2n_\mathrm{h}+1}$ indeed provides a good approximation to the torus.
\end{sloppypar}

\subsection{From two-dimensional tori to three-dimensional tori}
\label{sec:app-tor22tor3}
\begin{sloppypar}
Consider a two-dimensional invariant torus in the leading-order SSM reduced model~\eqref{eq:ode-reduced-slow-polar-leading} or~\eqref{eq:ode-reduced-slow-cartesian-leading}. This torus is approximated by a set of trajectories $\{\boldsymbol{w}_i(t)\}_{i=1}^{2n_\mathrm{h}+1}$ obtained by the \texttt{Tor} toolbox. Here the two frequencies of the torus are $\omega_{1,\mathrm{s}}$ and $\omega_{2,\mathrm{s}}$. At a TR bifurcation of periodic orbits, the unique invariant torus bifurcating from the periodic orbit has exactly the same geometry as the periodic orbit in the phase space. So the set of trajectories, $\{\boldsymbol{w}_i(t)\}_{i=1}^{2n_\mathrm{h}+1}$, degenerates into a single trajectory $\boldsymbol{w}(t)$, which is periodic with period $T_{2,\mathrm{s}}=2\pi/\omega_{2,s}$. Here we provide an alternative way to map the periodic orbit of the reduced-order model to the corresponding two-dimensional invariant torus in the normal coordinates $\boldsymbol{p}$. Unlike the method in section~\ref{sec:app-po2tor}, this approach approximates the invariant torus using a \emph{single} trajectory on the torus, instead of a set of trajectories. As we will see, the new approach can be applied to $\{\boldsymbol{w}_i(t)\}_{i=1}^{2n_\mathrm{h}+1}$ to yield an approximation to the three-dimensional torus in the normal coordinates $\boldsymbol{p}$.
\end{sloppypar}

Let $\phi_{2,\mathrm{s}}(k)$ be the phase angle of the intersection point $\mathcal{R}(\boldsymbol{p}(kT)$) on the closed curve $\Gamma$ (see section~\ref{sec:app-po2tor} for more details about the definition of $\mathcal{R}$ and $\Gamma$), it follows that
\begin{equation}
\phi_{2,\mathrm{s}}(k)=\phi_{2,\mathrm{s}}(0)+k\omega_{2,\mathrm{s}}T=\phi_{2,\mathrm{s}}(0)+2k\pi\varrho_{2,\mathrm{s}}
\end{equation}
for $k\in\mathbb{N}$, where $\varrho_{2,\mathrm{s}}=\omega_{2,s}T/(2\pi)$. Note that $T_{2,\mathrm{s}}\gg T$ in general because $T_{2,\mathrm{s}}$ is the period in the slow-phase dynamics. Let $k_{2,\mathrm{s}}=\mathrm{round}(T_{2,\mathrm{s}}/T)$, we have
\begin{align}
2k_{2,\mathrm{s}}\pi\varrho_{2,\mathrm{s}}& =\mathrm{round}\left(\frac{T_{2,\mathrm{s}}}{T}\right)\omega_{2,s}T \nonumber\\
&= 2\pi \cdot \mathrm{round}\left(\frac{T_{2,\mathrm{s}}}{T}\right)\frac{T}{T_{\mathrm{s},2}}\approx2\pi,
\end{align}
and then $\phi_{2,\mathrm{s}}(k_{2,\mathrm{s}})\approx\phi_{2,\mathrm{s}}(0)$, which implies that the set of intersection points $\{\phi_{2,\mathrm{s}}(0),\cdots,\phi_{2,\mathrm{s}}(k_{2,\mathrm{s}})\}$ provides a good approximation to the closed invariant curve $\Gamma$. It follows that the trajectory $\boldsymbol{p}(t)$ with $t\in[0,T_{2,\mathrm{s}}]$ covers the two-dimensional torus effectively.

\begin{sloppypar}
Let $\boldsymbol{p}_i(t)$ be the trajectory corresponding to $\boldsymbol{w}_i(t)$. Then $\{\boldsymbol{p}_i(t)\}_{i=1}^{2n_\mathrm{h}+1}$ covers the three-dimensional invariant torus effectively. Indeed, when $\{\boldsymbol{w}_i(t)\}_{i=1}^{2n_\mathrm{h}+1}$ degenerates into a single trajectory (periodic orbit), $\{\boldsymbol{p}_i(t)\}_{i=1}^{2n_\mathrm{h}+1}$ is reduced to a single trajectory that covers a two-dimensional invariant torus, which is the degenerate limit of a three-dimensional invariant torus. In general, $\{\boldsymbol{w}_i(t)\}_{i=1}^{2n_\mathrm{h}+1}$ approximates a \emph{nondegenerate} two-dimensional invariant torus in the reduced-order model, while the set of trajectories $\{\boldsymbol{p}_i(t)\}_{i=1}^{2n_\mathrm{h}+1}$ represents a \emph{tube} whose center line covers effectively a two-dimensional invariant torus, yielding a good approximation to the three-dimensional invariant torus.
\end{sloppypar}

In practical computations, the \texttt{Tor} toolbox provides a discrete approximation of $\boldsymbol{w}_i(t)$, namely, $\{\boldsymbol{w}_i(t_j)\}_{j=1}^N$ with $t_1=0$, $t_j\in(0,T_{2,\mathrm{s}})$ for $j=2,\cdots,N-1$ and $t_N=T_{2,\mathrm{s}}$. The time scale of $T_{2,\mathrm{s}}$ is much larger than the time scale of the excitation period $T$. In order to obtain an approximate three-dimensional torus with good accuracy, we need to perform the interpolation of $\boldsymbol{w}_i(t)$ at finer meshes to match the smallest time scale of $\boldsymbol{p}_i(t)$, i.e., $T$. Specifically, we can divide the interval $[0,T_{2,\mathrm{s}}]$ uniformly with $\mathrm{ceil}(T_{2,\mathrm{s}}/T)\cdot n_\mathrm{T}$ points, where $n_\mathrm{T}$ controls the number of time points in each period $T$. These $\mathrm{ceil}(T_{2,\mathrm{s}}/T)\cdot n_\mathrm{T}$ points are query points in the interpolation to yield a high-fidelity three-dimensional torus

\begin{sloppypar}
Now we are ready to map the approximated three-dimensional invariant torus in the normal coordinates $\boldsymbol{p}$ to the torus in physical coordinates $\boldsymbol{z}$. Specifically, we apply the map~\eqref{eq:ssm-decomp-leading} to each $\boldsymbol{p}_i(t)$ to yield the corresponding trajectory in physical coordinates, which is referred to as $\boldsymbol{z}_{i}(t)$ here. It follows that the collection of trajectories $\{\boldsymbol{z}_{i}(t)\}_{i=1}^{2n_\mathrm{h}+1}$ with $t\in[0,T_{2,\mathrm{s}}]$ can be used to approximate the three-dimensional invariant torus in physical coordinates. Likewise, the set of trajectories $\{\boldsymbol{z}_i(t)\}_{i=1}^{2n_\mathrm{h}+1}$ represents a \emph{tube} whose center line covers effectively a two-dimensional invariant torus.
\end{sloppypar}

\subsection{Supplementary analysis of example~\ref{sec:example1}}
\label{sec:ap-example1}
\subsubsection{Continuation of limit cycles}
\label{sec:ap-exap-limitcycle}
The results of one-dimensional continuation of limit cycles under varying $\Omega$ are shown in Fig.~\ref{fig:FRC_oneTwo_tori_TandRho}. The upper panel of this figure presents the projection of the continuation path onto the $(T_\mathrm{s},\Omega)$ plane, where $T_\mathrm{s}$ denotes the period of the limit cycle in the leading-order reduced dynamics~\eqref{eq:ode-reduced-slow-cartesian-leading}. This path is of complicated geometry with several SN and period-doubling (PD) bifurcation limit cycles along it.

\begin{figure*}[!ht]
\centering
\includegraphics[width=0.8\textwidth]{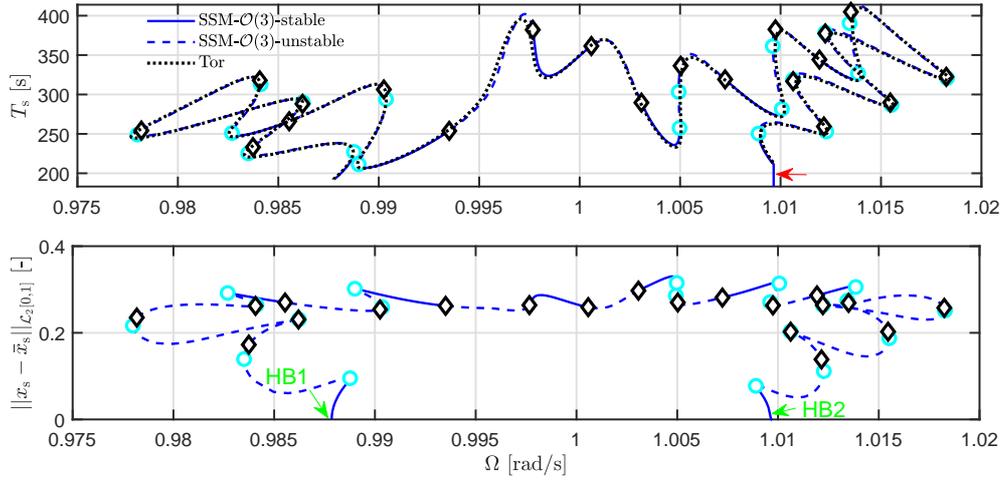}
\caption{Projections of the continuation path of limit cycles in the leading-order dynamics~\eqref{eq:ode-reduced-slow-cartesian-leading} of the coupled nonlinear oscillators~\eqref{eq:eom-two-os}. The upper and lower panels present the period and the size of the limit cycles as functions of $\Omega$, respectively. The solid lines here denote stable periodic orbits and dashed lines represent unstable ones. The circles and diamonds correspond to saddle-node (SN) and period-doubling (PD) bifurcation periodic orbits respectively. The two arrows in the lower panel indicate the two Hopf bifurcation (HB) points. The limit cycles here correspond to two-dimensional invariant tori in the original system~\eqref{eq:eom-two-os}. Such tori are also calculated directly using the \texttt{Tor} toolbox.}
\label{fig:FRC_oneTwo_tori_TandRho}
\end{figure*}

When a limit cycle is born out of a HB fixed point, the amplitude of such a limit cycle increases gradually from zero as $\Omega$ is varied from its critical value marking the HB point. To illustrate this evolution, we introduce a measure to quantify the size of a periodic orbit. This measure is essentially the deviation of the periodic orbit from its mean. Specifically, if $\boldsymbol{x}_\mathrm{s}(t)$ is a periodic solution with period $T_\mathrm{s}$, then we use the size measure
\begin{equation}
\label{eq:size-po}
||\boldsymbol{x}_\mathrm{s}-\bar{\boldsymbol{x}}_\mathrm{s}||_{\mathcal{L}_2[0,1]}^2:=\frac{1}{T_\mathrm{s}}\int_0^{T_\mathrm{s}} ||\boldsymbol{x}_\mathrm{s}(t)-\bar{\boldsymbol{x}}_\mathrm{s}||^2\mathrm{d}t,
\end{equation}
where the constant vector $\bar{\boldsymbol{x}}_\mathrm{s}$ is the time-average of the periodic signal, i.e.,
\begin{equation}
\bar{\boldsymbol{x}}_\mathrm{s}=\frac{1}{T_\mathrm{s}}\int_0^{T_\mathrm{s}} \boldsymbol{x}_\mathrm{s}(t)\mathrm{d} t.
\end{equation}
As seen in the lower panel of Fig.~\ref{fig:FRC_oneTwo_tori_TandRho}, the size of the limit cycles indeed increases from zero when $\Omega$ is increased from $\Omega=0.98785$ corresponding to the HB1 in Fig.~\ref{fig:oneTwo}. The orbit size evolves in a complicated way under varying $\Omega$ and returns to zero when $\Omega=1.0096$. At that point, the HB2 in Fig.~\ref{fig:oneTwo} is detected, with the limit cycle degenerating into a fixed point. As the continuation proceeds further from this critical periodic orbit, $\Omega$ is left constant while $T_\mathrm{s}$ decreases (cf. the red arrow in the upper panel of Fig.~\ref{fig:FRC_oneTwo_tori_TandRho}), and the periodic orbits are degenerate with zero size. The bifurcation curves in Fig.~\ref{fig:FRC_oneTwo_tori_TandRho} indicate the coexistence of several limit cycles for some values of $\Omega$. In addition, both stable and unstable limit cycles are observed.

\subsubsection{Two additional ways of validation via \texttt{Tor} toolbox}
\label{sec:ap-exap-twoval}
For a two-dimensional invariant torus, one frequency component is the excitation frequency $\Omega$, and the other one is \emph{a priori} unknown but can be solved for by the \texttt{Tor}-toolbox. If $\omega_\mathrm{s}$ is the unknown frequency component, which is a function of $\Omega$, then $T_\mathrm{s}={2\pi}/{\omega_\mathrm{s}}$ is exactly the period of the limit cycle in the leading-order dynamics~\eqref{eq:ode-reduced-slow-cartesian-leading}. Indeed, as can be seen in the upper panel of Fig.~\ref{fig:FRC_oneTwo_tori_TandRho}, the periods of limit cycles in the leading-order reduced dynamics obtained by SSM-analysis match well with the ones by \texttt{Tor} toolbox.

Our second additional validation method involves a Poinc\'are section. Consider the cross section 
\begin{equation}
\Sigma_{\boldsymbol{z}} = \{(\boldsymbol{z},t)\in\mathbb{R}^{4}\times\mathbb{R}:\mathrm{mod}(t,{2\pi}/{\Omega})=0\}.
\end{equation}
The intersection of a two-dimensional invariant torus with such a cross section is a closed curve, which is invariant under the period-${2\pi}/{\Omega}$ map generated by the flow. We can compare the closed curve obtained by SSM analysis and the \texttt{Tor} toolbox to demonstrate their agreement. We consider four sampled torus, namely, A-D in Fig.~\ref{fig:FRC_oneTwo_tori}, at which $\Omega$ is equal to 0.98, 0.99, 1.00, and 1.01, respectively. As seen in Fig.~\ref{fig:oneTwo-AD}, the intersection curves obtained by SSM analysis match well with the ones by \texttt{Tor} toolbox.

\begin{figure*}[!ht]
\centering
\includegraphics[width=0.45\textwidth]{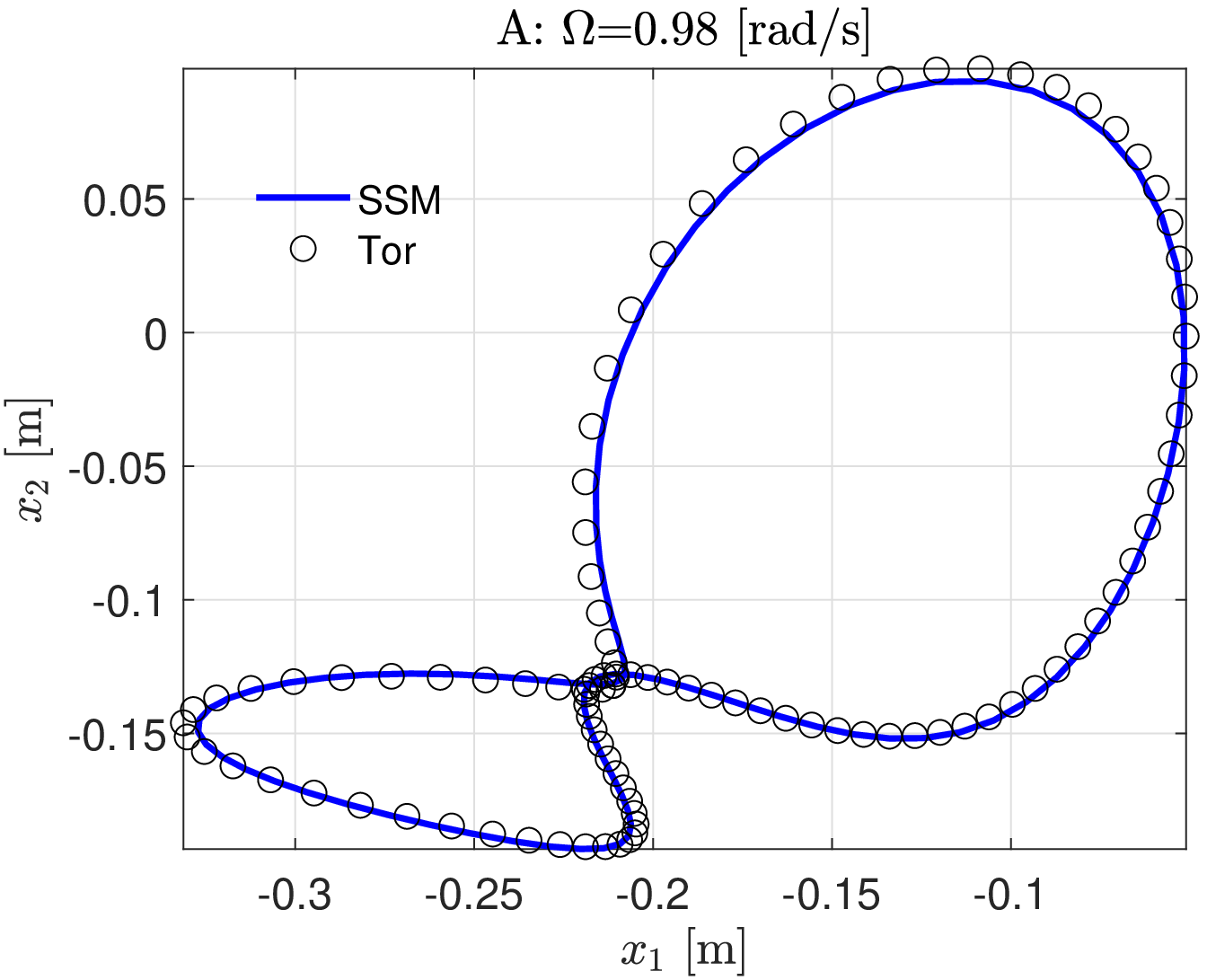}
\includegraphics[width=0.45\textwidth]{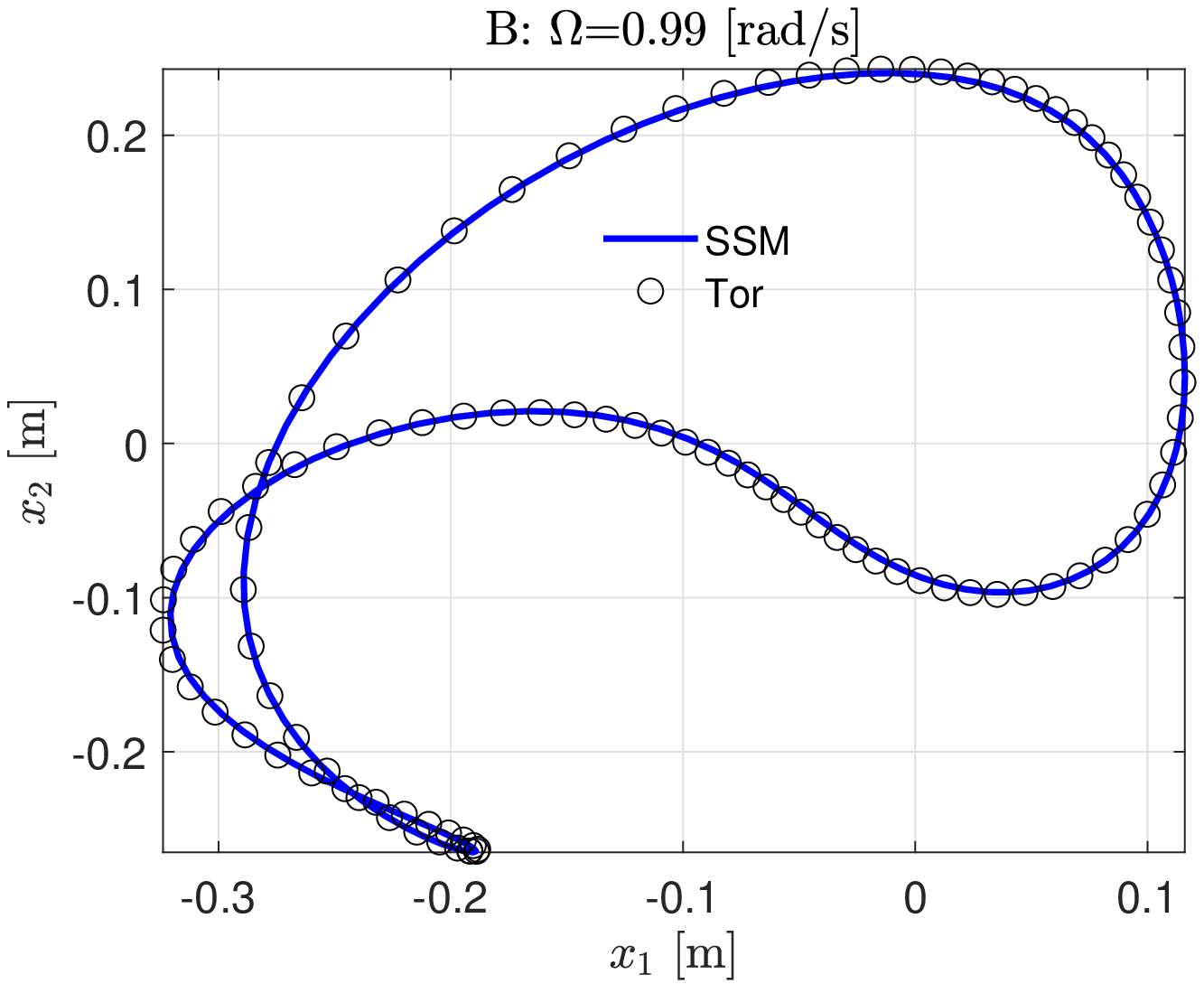}\\
\includegraphics[width=0.45\textwidth]{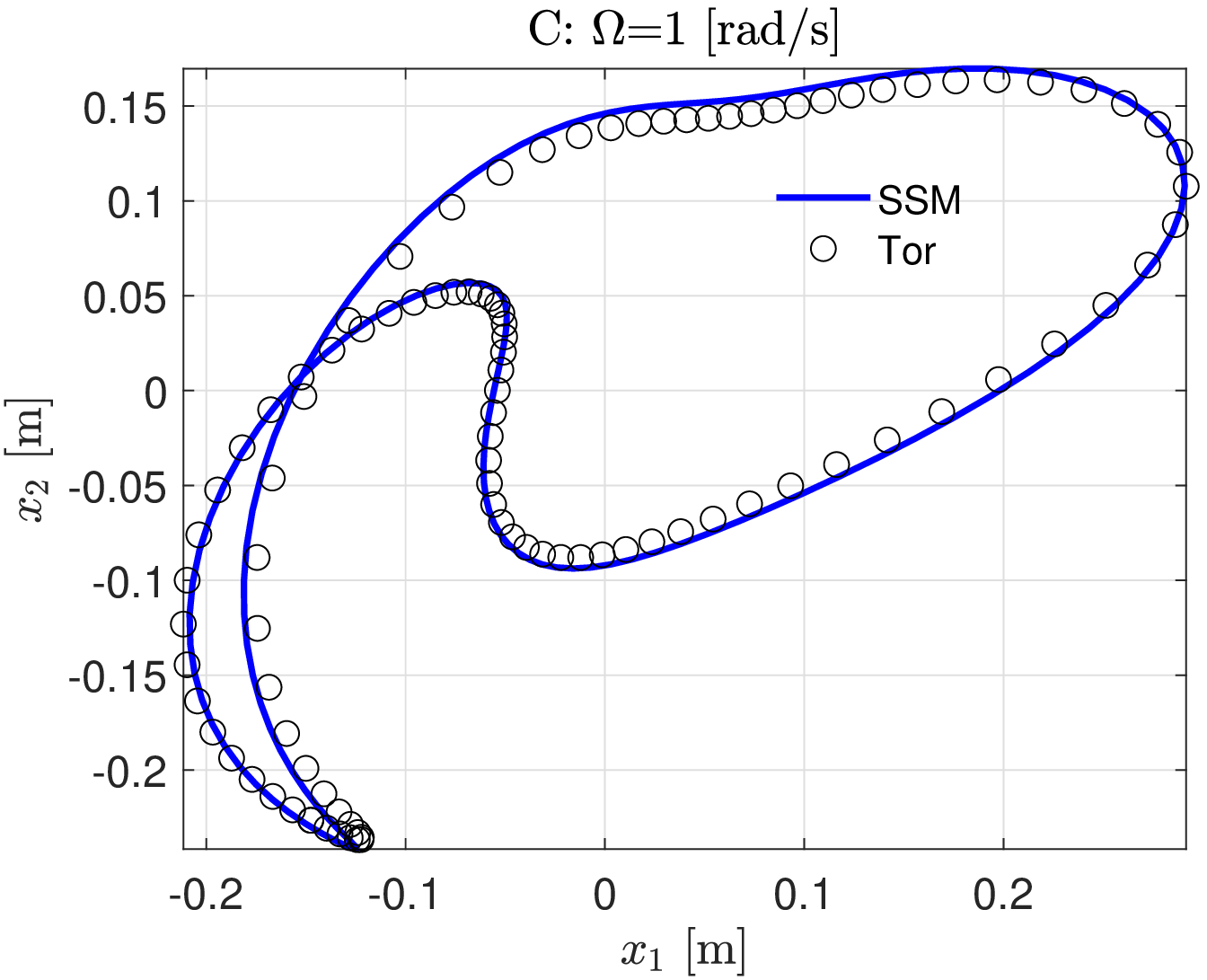}
\includegraphics[width=0.45\textwidth]{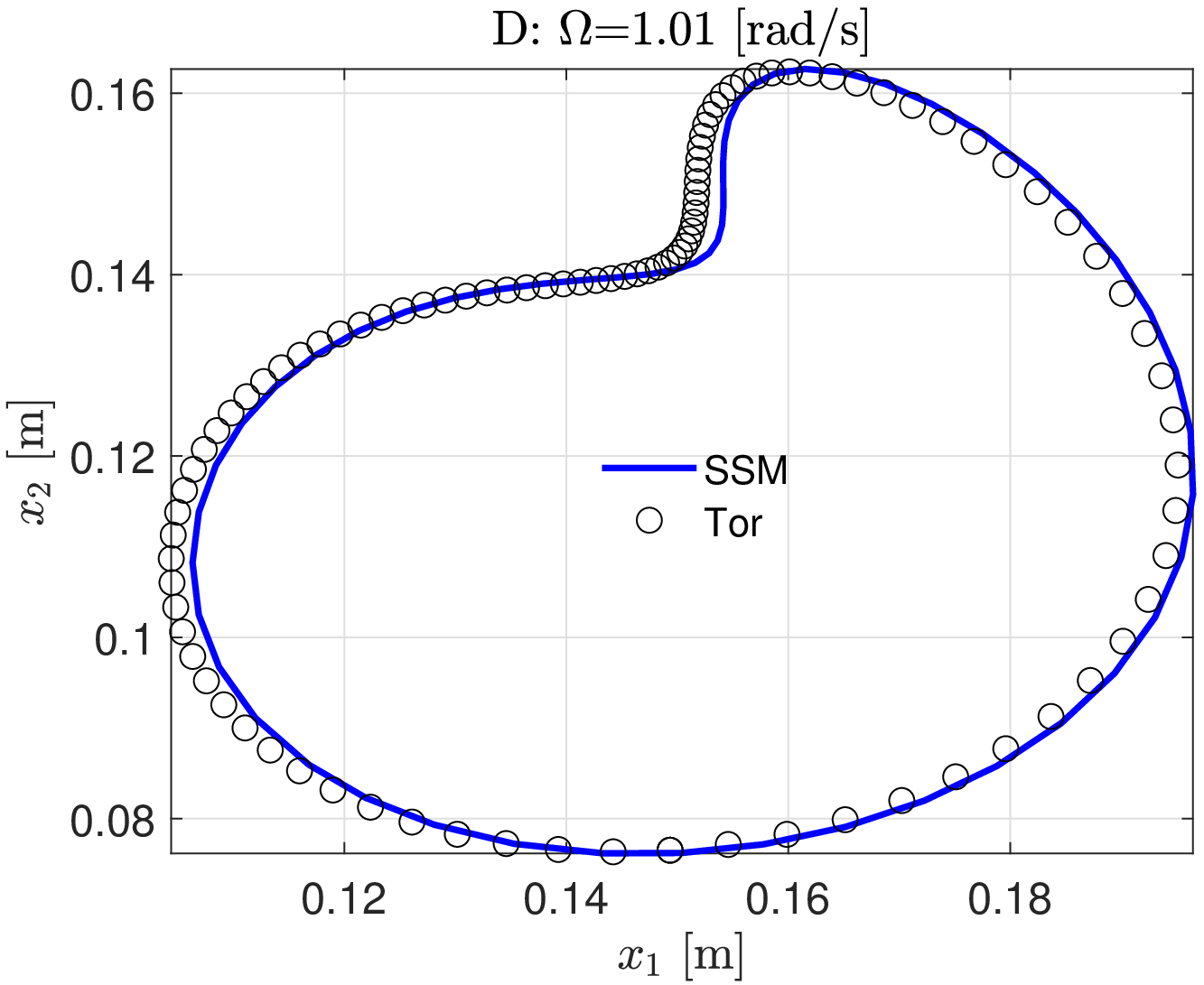}
\caption{Invariant intersections of the period-$2\pi/\Omega$ map of sampled tori of the coupled oscillators, approximated by limit cycles in the leading-order dynamics~\eqref{eq:ode-reduced-slow-cartesian-leading} from SSM analysis (solid lines) and direct computation using \texttt{Tor}-toolbox (cycles).}
\label{fig:oneTwo-AD}
\end{figure*}

\subsubsection{Validation of stability types}
\label{sec:ap-exap-stab}
To verify numerically the stability type of an invariant torus obtained from SSM analysis, we perform numerical integration of the original system with an initial point on the torus, and monitor whether the trajectory stays on the invariant torus or not. We take E and F in Fig.~\ref{fig:FRC_oneTwo_tori} as representative samples of stable and unstable invariant tori and then validate their stability types using numerical integration. Specifically, a point on the torus obtained from SSM analysis is selected as the initial condition of forward simulation applied to the original system. The simulation is performed using \texttt{ode45} in \textsc{matlab} over 300 excitation cycles (the final time of the simulation is $600\pi/\Omega$). As seen in Fig.~\ref{fig:oneTwo-EF3D}, the trajectory with the initial condition on the stable invariant torus E stays on the torus, while the trajectory with the initial condition on the unstable invariant torus F deviates from the toru, consistent with the prediction of SSM analysis. Indeed, the simulation time with 300 excitation cycles is long enough to infer the stability of the sampled invariant tori, as indicated by the maximal Lyapunov exponent (MLE) of the corresponding limit cycles in the leading-order SSM dynamics. Specifically, the MLE for the limit cycle corresponding to the stable invariant torus E is $\mu_\mathrm{E}=-0.0036$, yielding $e^{600\mu_\mathrm{E}/\Omega}=0.0012\ll1$. Likewise, the MLE for the limit cycle corresponding to the invariant torus F is $\mu_\mathrm{F}=0.0027$, yielding $e^{600\mu_\mathrm{F}/\Omega}=179.4491\gg1$.

\begin{figure*}[!ht]
\centering
\includegraphics[width=0.45\textwidth]{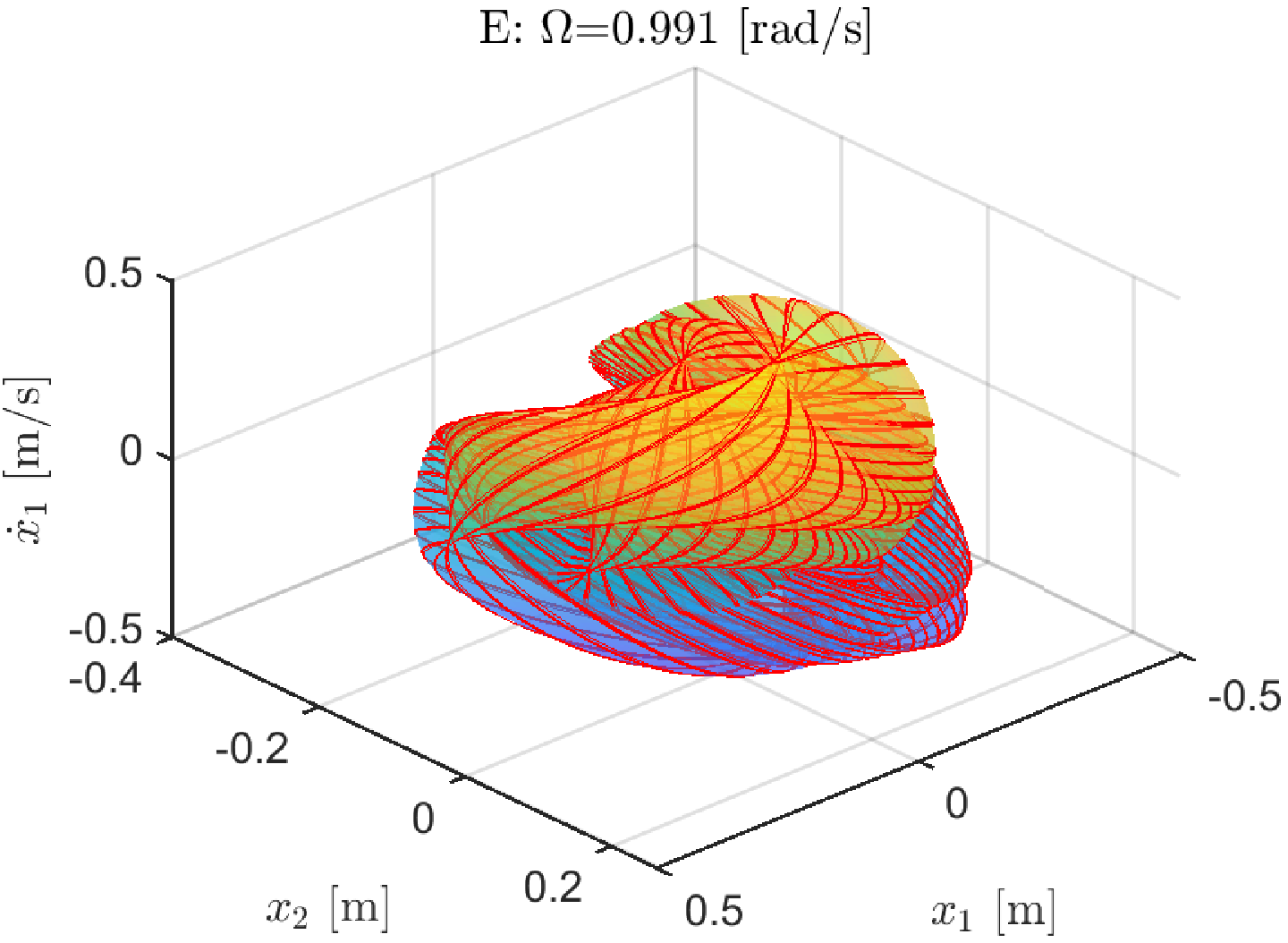}
\includegraphics[width=0.45\textwidth]{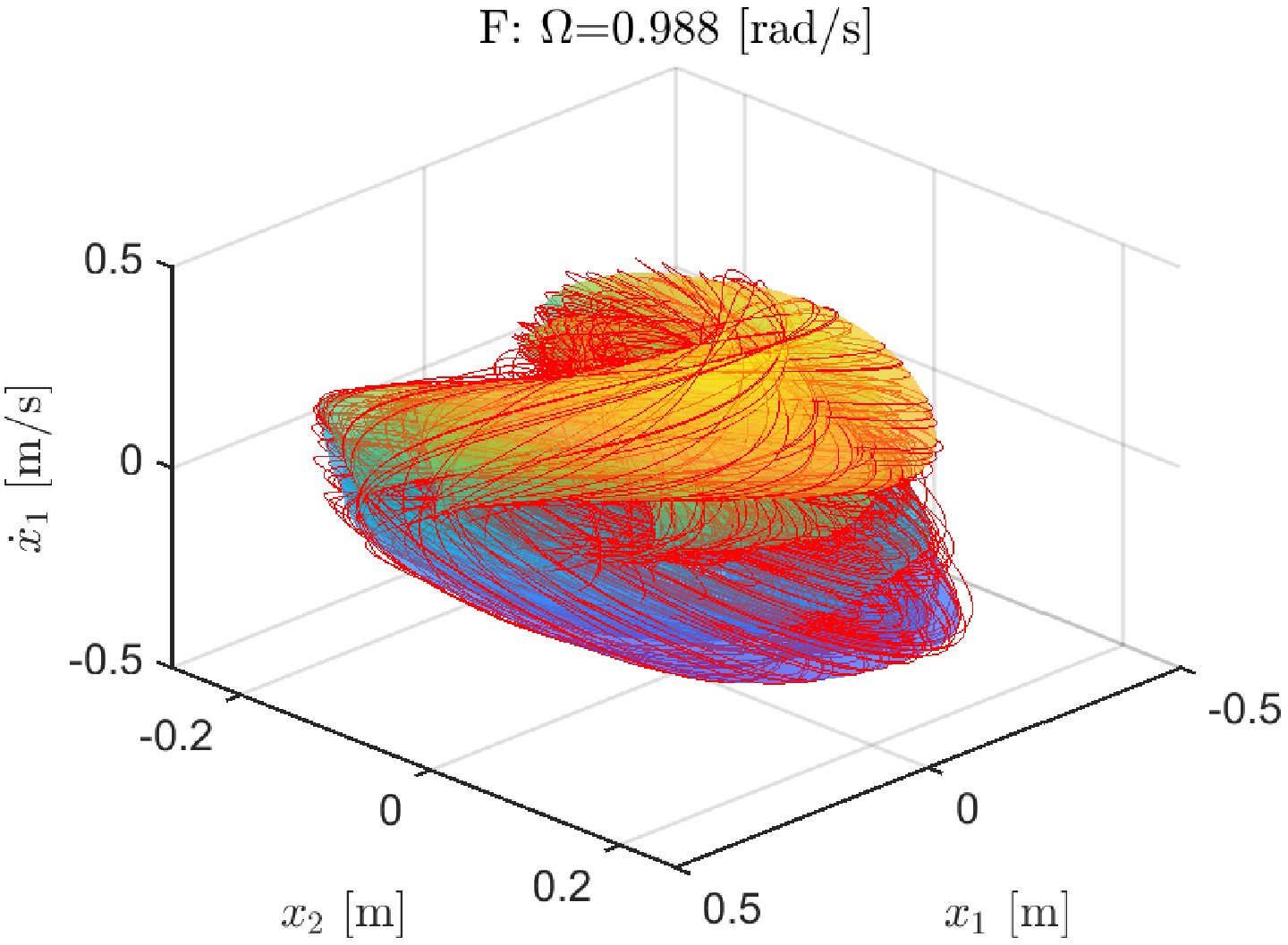}
\caption{Tori of coupled oscillators using SSM analysis (surface plot) and the trajectories by forward simulation of the original system~\eqref{eq:eom-two-os} with initial states on the computed tori (red lines).}
\label{fig:oneTwo-EF3D}
\end{figure*}

\subsection{Algorithm for selecting final time of direct numerical simulation}
\label{sec:app-tf-choice}
With a point on a torus obtained by the SSM reduction as the initial condition of the forward simulation of the full system, the trajectory obtained by the numerical integration should stay on the torus given that the torus is invariant and attracting. The remaining question is how long the forward simulation should be. We choose the final time $t_\mathrm{f}$ of the numerical integration such that i) $t_\mathrm{f}$ is large enough so that the trajectory covers the torus densely enough, and ii) $t_\mathrm{f}$ is short enough so that the computational cost remains reasonable. 

This method is based on information from the SSM analysis. Specifically, the SSM-reduced model reveals the two frequencies of such a torus: the external excitation frequency $\Omega$, and the {internal} frequency $\omega_\mathrm{s}\ll\Omega$ of the limit cycle in the reduced-order model. The rigid rotation of the phase angle $\phi_\mathrm{s}$ along the invariant intersection curve of the period-$2\pi/\Omega$ Poincar\'e section with the torus evolves as
\begin{equation}
\phi_\mathrm{s}\mapsto\phi_\mathrm{s}+\omega_s\frac{2\pi}{\Omega}=\phi_\mathrm{s}+2\pi\varrho_\mathrm{s},\quad \varrho_\mathrm{s}=\frac{\omega_\mathrm{s}}{\Omega}.
\end{equation}
Therefore, an approximate final time $t_{\mathrm{f}}$ for the forward simulation can be chosen as
\begin{equation}
\label{eq:final_time}
t_\mathrm{f} =M\cdot \mathrm{ceil}\left(\frac{1}{\varrho_\mathrm{s}}\right)\frac{2\pi}{\Omega}\approx M\frac{2\pi}{\omega_\mathrm{s}},\quad M\in\mathbb{N}_+.
\end{equation}
When $M=1$, the trajectory is already able to effectively cover the torus if it stays on the torus. With perturbation $\delta\boldsymbol{p}_0$ to the limit cycle, we have
\begin{equation}
    ||\delta\boldsymbol{p}(t_\mathrm{f})||\sim |\tilde{\lambda}_{\max}|^M ||\delta\boldsymbol{p}_0||, 
\end{equation}
where $|\tilde{\lambda}_{\max}|=\max\{\tilde{\lambda}_i\}_{i=1}^{2m-1}$. Here $\{\tilde{\lambda}_i\}_{i=1}^{2m-1}$ denotes the set of nontrivial Floquet multipliers of the limit cycle, with the trivial multiplier $\tilde{\lambda}_{2m}\equiv1$ excluded. Given that the limit cycle is stable, we have $|\tilde{\lambda}_{\max}|<1$. We can then choose a $\Delta\in(0,1)$ and determine $M$ as a function of $\Delta$ such that
\begin{align}
    M & =\underline{M}(\Delta):=\min\{k\in\mathbb{N}:|\tilde{\lambda}_{\max}|^k\leq\Delta\}\nonumber\\& =\mathrm{ceil}\left(\frac{\log\Delta}{\log|\tilde{\lambda}_{\max}|}\right).
\end{align}
Note that when $\Omega\to\Omega_\mathrm{HB}$, there exists a complex conjugate pair of multipliers approaching the unit circles of the complex plane ($\tilde{\lambda}\to e^{\pm\mathrm{i}\omega_\mathrm{s}}$). Then $|\tilde{\lambda}_{\max}|\to 1$ and $\underline{M}(\Delta)\to\infty$ for all $\Delta\in(0,1)$. Thus a more refined choice is
\begin{equation}
\label{eq:select-of-M}
    M = \min\{\underline{M}(\Delta),\bar{M}\}.
\end{equation}

From now on, we use the Newmark-beta method~\cite{geradin2014mechanical} to perform the numerical integration. The algorithm parameters are chosen as $\gamma=0.5+\alpha$ and $\beta=(1+\alpha)^2/4$ with $\alpha=0.005$. For each excitation cycle, 1000 integration steps are used with step size $2\pi/(1000\Omega)$. We can further calculate the amplitude of a quasi-periodic response using the trajectory obtained by the numerical integration. Specifically, we compute the amplitude of the trajectory in the horizon $[t_\mathrm{f}-t_\mathrm{f}/M,t_\mathrm{f}]$ to remove the artifacts of transient responses.

In example~\ref{sec:vonKarmanBeam}, we set $\Delta=0.001$ and $\bar{M}=200$ to ensure reliable verification. The $M(0.001)$ for the tori A and B shown in Fig.~\ref{fig:FRCs-vonBeam-physics} are 52 and 17, respectively, which are smaller than the threshold 200. In fact, $M=200$ occurs only for four sampled tori in the numerical simulations for each discrete model. In particular, these four tori are composed of the two tori whose $\Omega$ are closest to $\Omega_\mathrm{HB1}$, and the two tori whose $\Omega$ are closest to $\Omega_\mathrm{HB2}$. The calculation results for amplitudes of trajectories are plotted in Fig.~\ref{fig:FRCs-vonBeam-physics}.

In example~\ref{sec:vonKarmanPlate}, we have used the same algorithm to select the simulation time $t_\mathrm{f}$ for the tori appearing in Fig.~\ref{fig:vonKarmanPlate_tori} for the 1:1 resonant von K\'arm\'an plate. It follows from the upper panel of Fig.~\ref{fig:vonKarmanPlate_TandRho} that $1/\varrho_\mathrm{s}=\Omega/\omega_\mathrm{s}=\Omega T_\mathrm{s}/(2\pi)\sim200$. Equation~\eqref{eq:final_time} implies that the number of integration cycles is about 200 even for $M=1$.
With 1000 integration steps per cycle, the computational cost for each sampled $\Omega$ with $M=1$ is significant in this high-dimensional system. Indeed, each numerical integration here took more than 12 hours. So we simply set $M=1$ instead of using~\eqref{eq:select-of-M} to reduce computational load.

\section*{Declarations}
\section*{Conflict of interest}
The authors declare that they have no conflict of interest.

\section*{Data availability}
The data used to generate the numerical results included in this paper are available from the corresponding author on request.

\section*{Code availability}
\begin{sloppypar}
The code used to generate the numerical results included in this paper are available as part of the open-source \textsc{matlab} script SSMTool 2.2 at~\url{https://doi.org/10.5281/zenodo.6338831}.
\end{sloppypar}

% if you use bib file, please consider the following
% \bibliography{reference} 
% \bibliographystyle{ieeetr}

\end{document}